       \font\tenmsb=msbm10
       \font\sevenmsb=msbm7
       \font\fivemsb=msbm5
\let\amstexloaded@\relax\fi
       \def\spaces@{\space\space\space\space\space}
       \def\spaces@@{\spaces@\spaces@\spaces@\spaces@\spaces@}
       \def\space@.  {\futurelet\space@\relax}
       \def\Err@#1{\errhelp\defaulthelp@\errmessage{AmS-TeX error: #1}}
       \def\relaxnext@{\let\next\relax}
       \def\accentfam@{7}
       \def\noaccents@{\def\accentfam@{0}}
       \def\Cal{\relaxnext@\ifmmode\let\next\Cal@\else
       \def\next{\Err@{Use \string\Cal\space only in math mode}}\fi\next}
       \def\Cal@#1{{\Cal@@{#1}}}
       \def\Cal@@#1{\noaccents@\fam\tw@#1}
       \def\Bbb{\relaxnext@\ifmmode\let\next\Bbb@\else
       \def\next{\Err@{Use \string\Bbb\space only in math mode}}\fi\next}
       \def\Bbb@#1{{\Bbb@@{#1}}}
       \def\Bbb@@#1{\noaccents@\fam\msbfam#1}
\theoremstyle{plain}
\newtheorem{theorem}{Theorem}[section]
\newtheorem{lemma}[theorem]{Lemma}
\newtheorem{example}[theorem]{Example}
\numberwithin{equation}{section}
\newtheorem{remark}[theorem]{Remark}
\theoremstyle{definition}
\newtheorem{definition}[theorem]{Definition}
\begin{document}
\numberwithin{equation}{section}
\title{{\bf The $\mathfrak{m}$-DMP inverse in Minkowski  space \\  and its applications}}

\author{{\ Jiale Gao $^{a}$, Qingwen Wang $^{b}$, Kezheng Zuo $^{a}$\footnote{E-mail address: xiangzuo28@163.com (K. Zuo).}}
\\{{\small $^a$ School of Mathematics and Statistics, Hubei Normal University, Huangshi, 435002, PR China}}
\\ {\small $^b$ College of Science, Shanghai University, Shanghai, 200444, PR China}}

\maketitle

\begin{center}
\begin{minipage}{135mm}
{{\small {\bf Abstract:}  This paper first  introduces a new generalized inverse in Minkowski space, called the $\mathfrak{m}$-DMP inverse, and  discusses  its algebraic and geometrical properties.  The second objective is to characterize the $\mathfrak{m}$-DMP inverse   equivalently by
ranges, null spaces and matrix equations, and show  its integral and limiting representations and several explicit expressions. Finally, the  paper gives  applications of the $\mathfrak{m}$-DMP inverse in solving a system of linear equations and a constrained optimization problem.
\begin{description}
\item[{\bf Key words}:]  $\mathfrak{m}$-DMP inverse; Minkowski  space; Hartwig-Spindelb\"{o}ck decomposition; full-rank factorization; system of linear equations
\item[{\bf AMS subject classifications}:]15A09, 15A03, 15A24
\end{description}
}}
\end{minipage}
\end{center}

\section{Introduction}\label{introductionsection}
 Since Malik and Thome \cite{DMPorignre} defined the DMP inverse   by using  the Moore-Penrose inverse \cite{penrosere} and the Drazin inverse \cite{Drainzeinre},  there has been tremendous interest  in  developing the DMP inverse in recent years. Liu and Cai \cite{DMPfutre8} proposed two iterative methods to compute the DMP inverse. The integral  and  determinantal representations for the DMP inverse were  derived by  \cite{DMPfutre10} and \cite{DMPfutre15}, respectively. Ferreyra et al. \cite{mixclasschagire}  developed the maximal classes of matrices  to determine  the DMP inverse. Using the  classical Cayley-Hamilton theorem, Wang et al. \cite{DMPfutre12} gave an annihilating polynomial of the DMP inverse. Ma et al. \cite{chaDMPmare}  investigated characterizations, iterative methods, sign patterns and perturbation analysis for the DMP inverse as well as  its applications in solving singular linear systems. Zuo et al. \cite{diffDMPre} presented further characterizations of the DMP inverse in terms of its range  and null space.  Furthermore,  the notion of the DMP inverse was extended from  square complex matrices   to  rectangular complex  matrices \cite{DMPfutre7}, operators in Hilbert spaces \cite{DMPfutre3},  elements in rings \cite{DMPfutre13},  finite potent endomorphisms  on arbitrary vector spaces \cite{DMPfutre16},  square matrices  over the quaternion skew  field \cite{DMPfutre19}, and tensors \cite{DMPfutre25}. And,   other extended forms of the DMP inverse were established by \cite{DMPfutre21,DMPfutre24}.
\par
 In studying polarized hight, Renardy \cite{minSVDre} investigated the singular value decomposition in Minkowski space in order to quickly verify that a Mueller matrix map the forward light cone into itself. Subsequently,
the Minkowski inverse  in Minkowski space  was   established by Meenakshi \cite{minMPre}, who also  gave a condition for a Mueller matrix to have a singular value decomposition in Minkowski space according to its Minkowski inverse.
In the past two decades, a great deal of mathematical effort   has been devoted to the study of the Minkowski inverse.
More details of its properties, applications and generalizations  can be found in \cite{algebraicMMP,minEPre,minMPEPre,appequre,partordmEPre,WminMPorire,WminMPiterre,WminMPfurre,indef}.
 Recently, Wang et al. defined the  $\mathfrak{m}$-core inverse \cite{MCore}, $\mathfrak{m}$-core-EP inverse \cite{mincoreEPre} and  $\mathfrak{m}$-WG inverse \cite{minwgroupre} in Minkowski space, which can be regarded as extensions of the core inverse \cite{Coreinverse},  core-EP inverse \cite{coreEPinversere} and   weak group inverse \cite{weakgroupinversere}, respectively.
\par
Inspired  by the study of the DMP inverse and the generalized inverses in Minkowski space,  the intention of this paper is  to
  introduce a new   generalized inverse in Minkowski space, called the $\mathfrak{m}$-DMP inverse, and    discuss its properties, characterizations, representations and applications.
\par
The primary contributions of the paper are summed up as follows:
\begin{itemize}
  \item   The definition of the $\mathfrak{m}$-DMP inverse in Minkowski space is given as the unique solution of a certain system of  matrix equations.  Based on its explicit expression, the canonical  forms of the $\mathfrak{m}$-DMP inverse is also obtained in terms of the  Hartwig-Spindelb\"{o}ck decomposition.

  \item The $\mathfrak{m}$-DMP inverse is represented as an outer inverse with prescribed range and null space, and some of its algebraic and  geometrical  properties are shown. On the converse, the $\mathfrak{m}$-DMP inverse is
      equivalently characterized by using its basic properties.

 \item Applying the full-rank factorization leads to an
explicit formula of  the $\mathfrak{m}$-DMP inverse. According to this result, we present an integral representation of the $\mathfrak{m}$-DMP inverse.  And, a few limiting representations of the $\mathfrak{m}$-DMP inverse are proposed.

\item  We apply the $\mathfrak{m}$-DMP inverse to solve a system of linear equations in Minkowski space as well as a least norm problem. And, a condensed Cramer's rule for the unique solution of this system is stated.
\end{itemize}
\par
The present paper is built up as follows.   Some necessary notions, definitions and lemmas are recalled in Section \ref{preliminariessec}.
 Section \ref{deminDMPsec} is devoted to introduce the $\mathfrak{m}$-DMP inverse and its properties. Further characterizations and representations of the $\mathfrak{m}$-DMP inverse are shown in Section \ref{charepminDMPsec}. Section \ref{appminDMPsec} presents applications of the $\mathfrak{m}$-DMP inverse in solving a system of linear equations and an optimization problem. The conclusion is stated in Section \ref{conclusionsec}.

\section{Preliminaries}\label{preliminariessec}
We use the following notations  throughout this paper.  Let $\mathbb{C}^{n}$, $\mathbb{C}^{m\times n}$ and $\mathbb{C}^{n\times n}_k$ be the sets of all complex $n$-dimensional vectors, complex  ${m\times n}$ matrices, and complex  ${n\times n}$ matrices with index $k$, respectively.    The smallest nonnegative integer $k$ satisfying ${\rm rank}(A^{k+1})={\rm rank}(A^{k})$ is called the index of $A\in\mathbb{C}^{n\times n}$, denoted by ${\rm Ind}(A)$.    The symbols $A^*$,  ${\rm rank}( A )$, $\mathcal{R}(A)$, $\mathcal{N}(A)$, and $\Vert A \Vert _F$ stand for the conjugate transpose, rank, range, null space, and Frobenius norm of $A\in\mathbb{C}^{ m \times n}$, respectively. We denote the identity matrix in $\mathbb{C}^{n\times n}$ by $I_n$, and the null matrix with appropriate orders by $0$.   The projector  onto $\mathcal{S}$ along $\mathcal{T}$ is indicated by  $P_{\mathcal{S},\mathcal{T}}$, where $\mathcal{S},\mathcal{T}\subseteq\mathbb{C}^{n}$   are subspaces satisfying that their  direct sum is $\mathbb{C}^{n}$, i.e., $\mathcal{S}\oplus\mathcal{T}=\mathbb{C}^{n}$.
\par
Additionally, the Minkowski inner product  \cite{minSVDre,minMPre} of two elements $x$ and $y$ in $\mathbb{C}^{n}$  is defined by $(x,y)=<x,Gy>$,
where
$
G=\left(
    \begin{array}{cc}
      1 & 0 \\
      0 & -I_{n-1} \\
    \end{array}
  \right)
$ represents the Minkowski metric matrix with order $n$, and  $<\cdot,\cdot>$ is the conventional Euclidean inner product. The complex linear space $\mathbb{C}^{n}$   with the Minkowski inner product is called  the  Minkowski space. Note that the Minkowski space is also an indefinite inner product space \cite{indef}. The Minkowski adjoint of $A\in\mathbb{C}^{m\times n}$ is $A^{\sim}=GA^*F$, where $G$ and $F$ are Minkowski metric matrices with orders $n$ and $m$, respectively.
\par
 Next, we will review definitions of some generalized inverses.
\begin{definition}\cite{penrosere,wangbookre}
Let $A\in\mathbb{C}^{m\times n}$. Then the matrix $X\in\mathbb{C}^{n\times m}$ verifying
\begin{equation*}
\left.
  \begin{array}{cccc}
      AXA=A, &  XAX=X, & (AX)^*=AX,  & (XA)^*=XA, \\
  \end{array}
\right.
\end{equation*}
 is called the Moore-Penrose inverse  of $A$, denoted by $A^{\dag}$.
 In addition,  if $X$ satisfies $XAX=X$,
then we  call $X$  an  outer inverse  of $A$.  For subspaces $\mathcal{T}\subseteq\mathbb{C}^n$ and $\mathcal{S}\subseteq\mathbb{C}^m$,
an outer inverse $X$ of $A$ with  $\mathcal{R}(X)=\mathcal{T}$ and $\mathcal{N}(X)=\mathcal{S}$ is unique, and is denoted  by $A_{\mathcal{T},\mathcal{S}}^{(2)}$.
\end{definition}

\begin{definition}\cite{Drainzeinre,grouporgre}\label{Drazininvdef}
Let $A\in\mathbb{C}^{n\times n}_k$. Then the matrix $X\in\mathbb{C}^{n\times n}$ satisfying  \begin{equation*}
\left.
  \begin{array}{cccc}
     XAX=X,   &   AX=XA,   &   A^{k+1}X=A^{k}, \\
  \end{array}
\right.
\end{equation*}
 is called the Drazin inverse  of $A$,  denoted by $A^{D}$. In the case ${\rm Ind}(A)=1$, the Drazin inverse  of $A$ reduces the group inverse of $A$, which is denoted by $A^{\#}$.
\end{definition}

\begin{definition}\cite{DMPorignre}
Let $A\in\mathbb{C}^{n\times n}_k$. Then we call the matrix $X\in\mathbb{C}^{n\times n}$ fulfilling
\begin{equation*}
\left.
  \begin{array}{cccc}
     XAX=X,   & XA=A^{D}A,    &   A^{k}X=A^{k}A^{\dag},  \\
  \end{array}
\right.
\end{equation*}
the DMP inverse of $A$, which is  denoted by $A^{D,\dag}$. And, $A^{D,\dag}=A^DAA^{\dag}$.
\end{definition}

\begin{definition}\cite{minMPre}\label{minMPdefintion}
Let $A\in\mathbb{C}^{m\times n}$.
If there exists a matrix $X\in\mathbb{C}^{n\times m}$ such that
\begin{equation*}
\left.
  \begin{array}{cccc}
    AXA=A,  &   XAX=X,   &  (AX)^{\sim}=AX,  &  (XA)^{\sim}=XA,\\
  \end{array}
\right.
\end{equation*}
then $X$ is called the Minkowski inverse of $A$, denoted by $A^{\mathfrak{m}}$.
\end{definition}

Subsequently, we recall a few  auxiliary  lemmas which will be utilized later. First off, we mention  the Hartwig-Spindelb\"{o}ck decomposition as an effective tool in studying  generalized inverses.
\begin{lemma}[Hartwig-Spindelb\"{o}ck decomposition, \cite{HSdec}]\label{HSth}
Let $A\in\mathbb{C}^{n\times n}$ and $r={\rm rank}(A)$. Then $A$ can be represented in the from
\begin{equation}\label{HSAdec}
A=U\left(
     \begin{array}{cc}
       \Sigma K & \Sigma L \\ 0 & 0 \\
     \end{array}
   \right)U^*,
\end{equation}
where $U\in\mathbb{C}^{n\times n}$ is unitary,
$\Sigma ={\rm diag}$ $(\sigma_1,\sigma_2,...,\sigma_r)$ is the diagonal matrix of singular values of $A$, $\sigma_i>0$ $(i=1,2,...,r)$, and $K\in\mathbb{C}^{r\times r}$ and $L\in\mathbb{C}^{r\times (n-r)}$ satisfy $KK^*+LL^*=I_r$.
\end{lemma}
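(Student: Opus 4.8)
The plan is to build the unitary $U$ in two stages: first to create the block of zero rows, then to diagonalize what remains. \textbf{Stage 1 (isolating the range).} Since ${\rm rank}(A)=r$, the range $\mathcal{R}(A)$ is an $r$-dimensional subspace of $\mathbb{C}^{n}$ and $\mathcal{R}(A)^{\perp}=\mathcal{N}(A^{*})$ has dimension $n-r$. I would choose a unitary $U_{1}=\left(\begin{array}{cc} U_{11} & U_{12}\end{array}\right)$ whose first $r$ columns form an orthonormal basis of $\mathcal{R}(A)$ and whose last $n-r$ columns form an orthonormal basis of $\mathcal{R}(A)^{\perp}$. Every column of $A$ lies in $\mathcal{R}(A)$, hence is orthogonal to every column of $U_{12}$, so $U_{12}^{*}A=0$ and therefore
\[
U_{1}^{*}AU_{1}=\left(\begin{array}{cc} B & C \\ 0 & 0 \end{array}\right),\qquad B=U_{11}^{*}AU_{11}\in\mathbb{C}^{r\times r},\quad C=U_{11}^{*}AU_{12}\in\mathbb{C}^{r\times(n-r)}.
\]
Unitary equivalence preserves rank, so the $r\times n$ block $N:=\left(\begin{array}{cc} B & C\end{array}\right)$ has full row rank $r$; consequently $NN^{*}\in\mathbb{C}^{r\times r}$ is Hermitian positive definite.

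\textbf{Stage 2 (diagonalizing the Gram matrix).} By the spectral theorem there is a unitary $W\in\mathbb{C}^{r\times r}$ with $W^{*}(NN^{*})W=\Sigma^{2}$, where $\Sigma={\rm diag}(\sigma_{1},\dots,\sigma_{r})$ and $\sigma_{i}>0$. Put $U:=U_{1}\,{\rm diag}(W,I_{n-r})$, which is again unitary and keeps the lower block of zero rows; the top block becomes $\widetilde{N}:=\left(\begin{array}{cc} W^{*}BW & W^{*}C\end{array}\right)$, and a short computation gives $\widetilde{N}\widetilde{N}^{*}=W^{*}(BB^{*}+CC^{*})W=W^{*}NN^{*}W=\Sigma^{2}$. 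Defining $\left(\begin{array}{cc} K & L\end{array}\right):=\Sigma^{-1}\widetilde{N}$ with $K\in\mathbb{C}^{r\times r}$ and $L\in\mathbb{C}^{r\times(n-r)}$, we get $\widetilde{N}=\left(\begin{array}{cc} \Sigma K & \Sigma L\end{array}\right)$ and $KK^{*}+LL^{*}=\Sigma^{-1}\widetilde{N}\widetilde{N}^{*}\Sigma^{-1}=I_{r}$, so $U^{*}AU$ has exactly the form on the right-hand side of \eqref{HSAdec}.

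Finally I would confirm that the $\sigma_{i}$ really are the nonzero singular values of $A$. Writing $M$ for the block matrix in \eqref{HSAdec}, one has $MM^{*}={\rm diag}\big(\Sigma(KK^{*}+LL^{*})\Sigma,\,0\big)={\rm diag}(\Sigma^{2},0)$, and since $A=UMU^{*}$ with $U$ unitary, $AA^{*}$ is unitarily similar to $MM^{*}$; hence the singular values of $A$ are $\sigma_{1},\dots,\sigma_{r}$ together with $n-r$ zeros, and one may order the $\sigma_{i}$ decreasingly if desired. The only real subtlety is Stage 2: producing the decomposition in one step would only yield the positive-definite factor $(NN^{*})^{1/2}$ rather than a diagonal $\Sigma$, so the auxiliary unitary $W$ — acting on the range coordinates while leaving the zero rows untouched — is the device that forces $\Sigma$ to be genuinely the diagonal matrix of singular values.
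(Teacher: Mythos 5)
Your proof is correct: both stages go through, the computation $\widetilde{N}\widetilde{N}^{*}=\Sigma^{2}$ is right, and the closing check that the $\sigma_{i}$ really are the nonzero singular values of $A$ ties everything together. Note, however, that the paper does not prove this lemma at all; it is imported verbatim from the cited reference \cite{HSdec}, so the comparison is with the standard derivation rather than with anything in the text. That standard derivation is shorter: starting from a singular value decomposition $A=P\left(\begin{smallmatrix}\Sigma&0\\0&0\end{smallmatrix}\right)Q^{*}$, one inserts $PP^{*}$ to obtain $A=P\left(\begin{smallmatrix}\Sigma&0\\0&0\end{smallmatrix}\right)(Q^{*}P)P^{*}$, takes $U=P$, and reads off the block row $(K\;\,L)$ as the first $r$ rows of the unitary matrix $Q^{*}P$, whence $KK^{*}+LL^{*}=I_{r}$ is immediate. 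Your route trades the full SVD for two cheaper ingredients --- an orthonormal basis adapted to the splitting $\mathbb{C}^{n}=\mathcal{R}(A)\oplus\mathcal{N}(A^{*})$, plus the spectral theorem for the $r\times r$ positive definite Gram matrix $NN^{*}$ --- which amounts to re-deriving the SVD in block form. It is therefore longer, but it is self-contained, it makes explicit why $\Sigma$ can be taken genuinely diagonal (your Stage 2 remark), and it shows directly that the first $r$ columns of $U$ span $\mathcal{R}(A)$, a fact the paper relies on implicitly when it manipulates \eqref{HSAdec} later. Either way the lemma stands; there is no gap.
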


\begin{lemma}\cite[Formula (14)]{DMPorignre}
Let $A\in\mathbb{C}^{n\times n}$ be given by \eqref{HSAdec}. Then,
\begin{equation}\label{Ddec01}
    A^D=U
   \left(
     \begin{array}{cc}
       (\Sigma K)^D & \left((\Sigma K)^D\right)^2\Sigma L \\
       0 & 0 \\
     \end{array}
   \right)
    U^*.
\end{equation}

\end{lemma}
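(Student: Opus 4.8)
\emph{Proof idea.} The plan is to reduce the computation of $A^{D}$ to that of the Drazin inverse of the $2\times2$ block triangular matrix in \eqref{HSAdec}, exploiting that the Drazin inverse is preserved under conjugation by an invertible matrix. Write $A=UMU^{*}$ with $M:=\left(\begin{array}{cc}\Sigma K & \Sigma L\\0 & 0\end{array}\right)$. Straight from Definition~\ref{Drazininvdef} one checks that $UM^{D}U^{*}$ fulfils the three defining equations of $A^{D}$: $A(UM^{D}U^{*})=UMM^{D}U^{*}=UM^{D}MU^{*}=(UM^{D}U^{*})A$, the equation $XAX=X$ transfers verbatim, and $A^{k+1}(UM^{D}U^{*})=UM^{k+1}M^{D}U^{*}=UM^{k}U^{*}=A^{k}$ as soon as $k\geq{\rm Ind}(M)={\rm Ind}(A)$ (the indices agree since $A$ and $M$ are similar). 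Uniqueness of the Drazin inverse then gives $A^{D}=UM^{D}U^{*}$, so it remains only to identify $M^{D}$.

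Abbreviating $B:=\Sigma K$ and $C:=\Sigma L$, I would show that $X:=\left(\begin{array}{cc}B^{D} & (B^{D})^{2}C\\0 & 0\end{array}\right)$ equals $M^{D}$ by a direct block-matrix verification, using only the standard Drazin identities $BB^{D}=B^{D}B$, $B^{D}BB^{D}=B^{D}$, and $B^{j+1}B^{D}=B^{j}$ for $j\geq{\rm Ind}(B)$. Multiplying out gives $MX=\left(\begin{array}{cc}BB^{D} & B^{D}C\\0 & 0\end{array}\right)=XM$ and $XMX=X$; an easy induction yields $M^{j}=\left(\begin{array}{cc}B^{j} & B^{j-1}C\\0 & 0\end{array}\right)$ for every $j\geq1$, whence for $k\geq{\rm Ind}(B)+1$ one obtains $M^{k+1}X=\left(\begin{array}{cc}B^{k+1}B^{D} & B^{k+1}(B^{D})^{2}C\\0 & 0\end{array}\right)=\left(\begin{array}{cc}B^{k} & B^{k-1}C\\0 & 0\end{array}\right)=M^{k}$. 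Since ${\rm Ind}(M)\leq{\rm Ind}(B)+1$, such a $k$ is admissible for the Drazin definition, so uniqueness gives $M^{D}=X$; substituting back into $A^{D}=UM^{D}U^{*}$ produces \eqref{Ddec01}.

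I do not anticipate a genuine obstacle here: it is routine block arithmetic. The only point needing mild care is the identity $M^{k+1}X=M^{k}$, where the exponent $k$ must be chosen at least ${\rm Ind}(B)+1$ so that \emph{both} the top-left entry $B^{k+1}B^{D}$ collapses to $B^{k}$ and the top-right entry $B^{k+1}(B^{D})^{2}C=B^{k-1}(BB^{D})C$ collapses to $B^{k-1}C$. Alternatively one could split $B$ via its core--nilpotent decomposition and read off $B^{D}$ on its invertible part, but the direct verification above is shorter.
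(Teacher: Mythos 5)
Your proof is correct. Note, however, that the paper does not prove this lemma at all: it is imported verbatim as Formula (14) of the cited Malik--Thome paper \cite{DMPorignre}, so there is no in-paper argument to compare against. Your derivation --- similarity invariance of the Drazin inverse under the unitary $U$, followed by direct verification of the three defining equations for the block matrix, with the correct observation that the exponent must be taken at least ${\rm Ind}(\Sigma K)+1$ so that both block entries of $M^{k+1}X$ collapse --- is the standard route to this formula and is sound; by the uniqueness of the solution of the Drazin system for any admissible exponent, checking the equations at $k={\rm Ind}(\Sigma K)+1$ already pins down $M^{D}$, so even the bound ${\rm Ind}(M)\leq{\rm Ind}(\Sigma K)+1$ is not strictly needed.
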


 Several  significant properties of the Drazin inverse and the Minkowski inverse are referenced.

\begin{lemma}\label{HSmMpdecth}
Let $A\in\mathbb{C}^{m\times n}$. Then the following statements are equivalent:
\begin{enumerate}[$(1)$]
\item $A^{\mathfrak{m}}$ exists;
  \item\label{minMPexsitconditem2} \cite[Theorem 1]{minMPre} ${\rm rank}(AA^{\sim})={\rm rank}(A^{\sim}A)={\rm rank}(A)$;
        \item\label{minMPexsitconditem3} ${\rm rank}({A^{\sim}}AA^{\sim})={\rm rank}(A)$;
\item\cite[Theorems 7,8]{algebraicMMP}\label{minMPexsitconditem4}
$CC^{\sim}$ and $B^{\sim}B$ are nonsingular, where $BC=A$ is a full rank factorization of $A$ with rank $r$, in which case,
$
A^{\mathfrak{m}}=C^{\sim}(CC^{\sim})^{-1}(B^{\sim}B)^{-1}B^{\sim}
$.
\end{enumerate}
\end{lemma}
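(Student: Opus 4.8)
The plan is to prove the cycle $(1)\Rightarrow(2)\Rightarrow(3)\Rightarrow(4)\Rightarrow(1)$, since $(2)$ is already cited from \cite{minMPre} and $(4)$ from \cite{algebraicMMP}, so the genuinely new work is the two short implications $(2)\Rightarrow(3)$ and $(3)\Rightarrow(4)$ (or whichever pair closes the loop most cheaply), together with the verification that the displayed formula for $A^{\mathfrak{m}}$ in $(4)$ is correct. Throughout I would exploit the basic algebraic facts about the Minkowski adjoint: $(A^{\sim})^{\sim}=A$, $(AB)^{\sim}=B^{\sim}A^{\sim}$, $G^{\sim}=G$, $G^2=I$, and $\mathcal{R}(A^{\sim})=G\,\mathcal{R}(A^*)=G\,\mathcal{N}(A)^{\perp}$, so that rank is preserved under $\sim$ just as under $*$. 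These reduce many of the Minkowski-space statements to their Euclidean analogues after conjugating by the (invertible) metric matrices $F$ and $G$.

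For $(2)\Rightarrow(3)$: from $\operatorname{rank}(A^{\sim}A)=\operatorname{rank}(A)$ one gets $\mathcal{R}(A^{\sim}AA^{\sim})=\mathcal{R}(A^{\sim}A\cdot A^{\sim})$; since right-multiplication by $A^{\sim}$ restricted to $\mathcal{R}(A)$ — equivalently the observation that $\operatorname{rank}(AA^{\sim})=\operatorname{rank}(A^{\sim})$ forces $\mathcal{N}(A^{\sim})\cap\mathcal{R}(A)=\{0\}$, hence $\mathcal{N}(A^{\sim}A)\cap\mathcal{R}(A^{\sim})=\{0\}$ after transposing — I can chain the two rank conditions to conclude $\operatorname{rank}(A^{\sim}AA^{\sim})=\operatorname{rank}(A^{\sim}A)=\operatorname{rank}(A)$. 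The converse $(3)\Rightarrow(2)$ is the same kind of argument run backwards: $\operatorname{rank}(A)\ge\operatorname{rank}(A^{\sim}A)\ge\operatorname{rank}(A^{\sim}AA^{\sim})=\operatorname{rank}(A)$ pins the middle quantities, and symmetrically for $AA^{\sim}$. For $(2)\Leftrightarrow(4)$: if $A=BC$ is a full-rank factorization with $B\in\mathbb{C}^{m\times r}$, $C\in\mathbb{C}^{r\times n}$, then $A^{\sim}A=C^{\sim}B^{\sim}BC$ and, because $B$ has full column rank and $C$ full row rank, $\operatorname{rank}(A^{\sim}A)=\operatorname{rank}(B^{\sim}B)$ exactly when $B^{\sim}B\in\mathbb{C}^{r\times r}$ is nonsingular; dually $\operatorname{rank}(AA^{\sim})=\operatorname{rank}(CC^{\sim})$, giving the equivalence with $(2)$.

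Finally, to check $A^{\mathfrak{m}}=X:=C^{\sim}(CC^{\sim})^{-1}(B^{\sim}B)^{-1}B^{\sim}$ under the hypotheses of $(4)$, I would simply verify the four defining equations of Definition 2.4. Writing $AXA=BC\,X\,BC$ and cancelling, $CC^{\sim}(CC^{\sim})^{-1}=I_r$ and $(B^{\sim}B)^{-1}B^{\sim}B=I_r$ collapse the middle, leaving $BC=A$; $XAX=X$ follows the same way; and $AX=B(B^{\sim}B)^{-1}B^{\sim}$, $XA=C^{\sim}(CC^{\sim})^{-1}C$ are visibly Minkowski-Hermitian since $\bigl(B(B^{\sim}B)^{-1}B^{\sim}\bigr)^{\sim}=B\bigl((B^{\sim}B)^{\sim}\bigr)^{-1}B^{\sim}=B(B^{\sim}B)^{-1}B^{\sim}$, using $(B^{\sim}B)^{\sim}=B^{\sim}B^{\sim\sim}=B^{\sim}B$. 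Uniqueness of the Minkowski inverse then identifies $X$ with $A^{\mathfrak{m}}$ and completes $(4)\Rightarrow(1)$.

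I expect the main obstacle to be purely bookkeeping rather than conceptual: the metric matrices $F$ and $G$ make $\sim$ behave \emph{almost} like $*$ but not quite (e.g. $A^{\sim}A$ need not be positive semidefinite, so "$\operatorname{rank}(A^{\sim}A)=\operatorname{rank}(A)$" is a genuine restriction and not automatic as it would be for $A^*A$), so the rank-chaining steps in $(2)\Leftrightarrow(3)$ must be justified via null-space intersection arguments rather than by invoking $\mathcal{R}(A^{\sim}A)=\mathcal{R}(A^{\sim})$, which can fail. Keeping careful track of which factor has full column versus full row rank, and of the orders of $F$ and $G$ at each step, is where the proof needs attention.
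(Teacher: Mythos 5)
Your proposal is correct and, for the only implication the paper actually proves (the equivalence of (2) and (3) --- everything else is cited from \cite{minMPre} and \cite{algebraicMMP}), it uses essentially the same argument: the Frobenius--Sylvester rank identity ${\rm rank}(XY)={\rm rank}(Y)-\dim(\mathcal{R}(Y)\cap\mathcal{N}(X))$ to chain the two rank hypotheses, with the converse direction following from the trivial inequalities ${\rm rank}(A^{\sim}AA^{\sim})\leq{\rm rank}(AA^{\sim}),{\rm rank}(A^{\sim}A)\leq{\rm rank}(A)$. Your additional verification of $(2)\Leftrightarrow(4)$ and of the four Minkowski--Penrose equations for $C^{\sim}(CC^{\sim})^{-1}(B^{\sim}B)^{-1}B^{\sim}$ is sound and goes beyond what the paper writes out. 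One small misattribution: it is ${\rm rank}(A^{\sim}A)={\rm rank}(A)$ that forces $\mathcal{R}(A)\cap\mathcal{N}(A^{\sim})=\{0\}$, while ${\rm rank}(AA^{\sim})={\rm rank}(A^{\sim})$ forces $\mathcal{R}(A^{\sim})\cap\mathcal{N}(A)=\{0\}$; you swap these, but since both equalities are assumed in (2) the chain still closes.
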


\begin{proof}
We only prove  \eqref{minMPexsitconditem2} $\Leftrightarrow$ \eqref{minMPexsitconditem3}.  In fact, since $\mathcal{R}(A)\cap \mathcal{N}(A^{\sim})=\{0\}$ by ${\rm rank}(A^{\sim}A)={\rm rank}(A)$, it follows from ${\rm rank}(AA^{\sim})={\rm rank}(A)$ that
\begin{align*}
{\rm rank}({A^{\sim}}AA^{\sim})&={\rm rank}(AA^{\sim})-{\rm dim}(\mathcal{R}(AA^{\sim}) \cap \mathcal{N}(A^{\sim}))\\
&={\rm rank}(A)-{\rm dim}(\mathcal{R}(A)\cap \mathcal{N}(A^{\sim})) ={\rm rank}(A).
\end{align*}
Conversely, it is easy and is therefore omitted.
\end{proof}

\begin{lemma}\label{mMPprole}\cite[Theorem 9]{indef}
Let $A\in\mathbb{C}^{m\times n}$ be such that $A^{\mathfrak{m}}$ exists.
Then,
\begin{enumerate}[$(1)$]
\item\label{mMPRN} $\mathcal{R}(A^{\mathfrak{m}})=\mathcal{R}(A^{\sim})$ and $\mathcal{N}(A^{\mathfrak{m}})=\mathcal{N}(A^{\sim})$;
  \item\label{AAmMPeq} $AA^{\mathfrak{m}}=P_{\mathcal{R}(A),\mathcal{N}(A^{\sim})}$;
  \item\label{AmAMPeq} $A^{\mathfrak{m}}A=P_{\mathcal{R}(A^{\sim}),\mathcal{N}(A)}$.
\end{enumerate}
\end{lemma}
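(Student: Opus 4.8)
Write $X=A^{\mathfrak{m}}$ and work directly from the four defining equations $AXA=A$, $XAX=X$, $(AX)^{\sim}=AX$, $(XA)^{\sim}=XA$. The first two equations already pin down half of everything: $AX$ and $XA$ are idempotent (for instance $(XA)^{2}=X(AXA)=XA$), and the routine two-line inclusion arguments give $\mathcal{R}(XA)=\mathcal{R}(X)$, $\mathcal{N}(XA)=\mathcal{N}(A)$, $\mathcal{R}(AX)=\mathcal{R}(A)$ and $\mathcal{N}(AX)=\mathcal{N}(X)$. Hence, once we identify the ``missing half'' of each idempotent --- $\mathcal{N}(XA)$ in terms of $X$, and $\mathcal{R}(AX)$ in terms of $X$ --- statements $(2)$ and $(3)$ will follow immediately, since an idempotent matrix is exactly the projector onto its range along its null space, and the decompositions $\mathbb{C}^{m}=\mathcal{R}(A)\oplus\mathcal{N}(A^{\sim})$, $\mathbb{C}^{n}=\mathcal{R}(A^{\sim})\oplus\mathcal{N}(A)$ are automatic from idempotency.

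The core of the proof is thus to translate the two $\sim$-symmetry conditions into range/null-space identities. I would isolate a small general fact: if $E^{\sim}=E$ with respect to a metric matrix $M\in\{F,G\}$ (so $M^{*}=M$ and $M^{2}=I$), then $E^{*}=MEM$, whence $\mathcal{R}(E)^{\perp}=\mathcal{N}(E^{*})=M\,\mathcal{N}(E)$, that is, $\mathcal{R}(E)=(M\,\mathcal{N}(E))^{\perp}$. I would pair this with the elementary observations $\mathcal{R}(A^{\sim})=\mathcal{R}(GA^{*}F)=G\,\mathcal{R}(A^{*})=G\,\mathcal{N}(A)^{\perp}=(G\,\mathcal{N}(A))^{\perp}$ and $\mathcal{N}(A^{\sim})=\mathcal{N}(GA^{*}F)=F\,\mathcal{N}(A^{*})=F\,\mathcal{R}(A)^{\perp}=(F\,\mathcal{R}(A))^{\perp}$, which use only the invertibility of $F$ and $G$ and the identity $(MS)^{\perp}=M^{-*}S^{\perp}=M\,S^{\perp}$.

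Combining the pieces: applying the general fact to $E=XA\in\mathbb{C}^{n\times n}$ (metric $G$) yields $\mathcal{R}(X)=\mathcal{R}(XA)=(G\,\mathcal{N}(XA))^{\perp}=(G\,\mathcal{N}(A))^{\perp}=\mathcal{R}(A^{\sim})$; applying it to $E=AX\in\mathbb{C}^{m\times m}$ (metric $F$) yields $\mathcal{R}(A)=\mathcal{R}(AX)=(F\,\mathcal{N}(AX))^{\perp}$, so $\mathcal{N}(AX)=F\,\mathcal{R}(A)^{\perp}=\mathcal{N}(A^{\sim})$ and therefore $\mathcal{N}(X)=\mathcal{N}(AX)=\mathcal{N}(A^{\sim})$. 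This is $(1)$, and then $AA^{\mathfrak{m}}=AX=P_{\mathcal{R}(A),\,\mathcal{N}(A^{\sim})}$ and $A^{\mathfrak{m}}A=XA=P_{\mathcal{R}(A^{\sim}),\,\mathcal{N}(A)}$ follow as noted, giving $(2)$ and $(3)$.

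I expect the only real obstacle to be bookkeeping rather than ideas: one has to keep straight which of the two metric matrices $F$ (order $m$) and $G$ (order $n$) acts on which side, and invoke $F^{*}=F$, $G^{*}=G$, $F^{2}=I$, $G^{2}=I$ and $(MS)^{\perp}=M^{-*}S^{\perp}$ consistently. A shorter alternative would bypass the $\sim$-symmetry manipulations entirely: start from the explicit expression $A^{\mathfrak{m}}=C^{\sim}(CC^{\sim})^{-1}(B^{\sim}B)^{-1}B^{\sim}$ recalled in Lemma \ref{HSmMpdecth}, use that $\sim$ is anti-multiplicative so that $A^{\sim}=(BC)^{\sim}=C^{\sim}B^{\sim}$, and cancel the full-rank factors to read off $\mathcal{R}(A^{\mathfrak{m}})=\mathcal{R}(C^{\sim})=\mathcal{R}(A^{\sim})$ and $\mathcal{N}(A^{\mathfrak{m}})=\mathcal{N}(B^{\sim})=\mathcal{N}(A^{\sim})$ at once; $(2)$ and $(3)$ then again come from the idempotency of $AA^{\mathfrak{m}}$ and $A^{\mathfrak{m}}A$.
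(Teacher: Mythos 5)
The paper does not prove this lemma at all: it is quoted verbatim from \cite[Theorem 9]{indef}, so there is no internal proof to compare against. Your argument is a correct, self-contained derivation from the four defining equations. The bookkeeping all checks out: idempotency of $AX$ and $XA$ plus the standard inclusions from $AXA=A$ and $XAX=X$ give $\mathcal{R}(XA)=\mathcal{R}(X)$, $\mathcal{N}(XA)=\mathcal{N}(A)$, $\mathcal{R}(AX)=\mathcal{R}(A)$, $\mathcal{N}(AX)=\mathcal{N}(X)$; your ``general fact'' $E^{\sim}=E\Rightarrow E^{*}=MEM\Rightarrow\mathcal{R}(E)=(M\mathcal{N}(E))^{\perp}$ is right (using $M^{*}=M$, $M^{2}=I$), and matching it against $\mathcal{R}(A^{\sim})=(G\mathcal{N}(A))^{\perp}$ and $\mathcal{N}(A^{\sim})=(F\mathcal{R}(A))^{\perp}$ correctly identifies the missing range and null space, after which $(2)$ and $(3)$ are immediate from the fact that an idempotent is the projector onto its range along its null space. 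Your alternative route via the full-rank factorization formula of Lemma \ref{HSmMpdecth}\eqref{minMPexsitconditem4} also works, provided you note explicitly that $C^{\sim}$ has full column rank and $B^{\sim}$ has full row rank (so that the invertible middle factors can indeed be ``cancelled'' when reading off ranges and null spaces); this is the style of argument the paper itself uses elsewhere (e.g.\ in Theorem \ref{minDMPfulldth}), whereas your main argument is closer in spirit to the indefinite-inner-product source being cited.
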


\begin{lemma}\cite[Theorem 2.1.4]{wangbookre}\label{Dinverseperle}
Let  $A\in\mathbb{C}^{n\times n}_{k}$.  Then,
\begin{enumerate}[$(1)$]
  \item\label{Dinverseperitem1} $  \mathcal{R}(A^D)=\mathcal{R}(A^{k})$ and $ \mathcal{N}(A^D)=\mathcal{N}(A^{k}) $;
  \item\label{Dinverseperitem2} $ AA^{D}=A^DA= P_{\mathcal{R}(A^{k}),\mathcal{N}(A^{k})}$.
\end{enumerate}

\end{lemma}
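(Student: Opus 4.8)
The plan is to derive all four assertions directly from the three defining relations of the Drazin inverse recalled in Definition~\ref{Drazininvdef}, namely $A^DAA^D=A^D$, $AA^D=A^DA$, and $A^{k+1}A^D=A^k$, using only elementary manipulations of these identities; nothing else is required. The one preparatory fact I would establish first is the chain
\[
A^D=(A^D)^{k+1}A^k=A^k(A^D)^{k+1}.
\]
Here the second equality is immediate, since $A^D$ commutes with $A$ and hence $(A^D)^{k+1}$ commutes with $A^k$. The first equality follows by induction on $j$ from the base case $A^D=(A^D)^2A$ (itself obtained from $A^D=A^D(AA^D)=A^D(A^DA)=(A^D)^2A$) together with the inductive step $(A^D)^{j+2}A^{j+1}=A^D\,((A^D)^{j+1}A^j)\,A=A^D\cdot A^D\cdot A=A^D$. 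Dually, commuting $A^D$ past $A$ in $A^{k+1}A^D=A^k$ produces the companion identities $A^k=A^DA^{k+1}$ and $A^k=A^kA^DA$.

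For part~(1), the identity $A^D=A^k(A^D)^{k+1}$ yields $\mathcal{R}(A^D)\subseteq\mathcal{R}(A^k)$, while $A^k=A^DA^{k+1}$ yields $\mathcal{R}(A^k)\subseteq\mathcal{R}(A^D)$, so $\mathcal{R}(A^D)=\mathcal{R}(A^k)$. For the null spaces, $A^D=(A^D)^{k+1}A^k$ gives $\mathcal{N}(A^k)\subseteq\mathcal{N}(A^D)$, and $A^k=A^{k+1}A^D$ gives $\mathcal{N}(A^D)\subseteq\mathcal{N}(A^k)$, so $\mathcal{N}(A^D)=\mathcal{N}(A^k)$.

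For part~(2), set $E:=AA^D=A^DA$. Then $E^2=AA^DAA^D=A(A^DAA^D)=AA^D=E$, so $E$ is idempotent; consequently $E$ is the projector onto $\mathcal{R}(E)$ along $\mathcal{N}(E)$, and in particular $\mathbb{C}^n=\mathcal{R}(E)\oplus\mathcal{N}(E)$ holds automatically. It remains only to identify these two subspaces. From $E=A^DA$ we get $\mathcal{R}(E)\subseteq\mathcal{R}(A^D)=\mathcal{R}(A^k)$ and $\mathcal{N}(A^D)\subseteq\mathcal{N}(E)$; conversely, $A^k=A^kA^DA=A^kE$ shows $\mathcal{R}(A^k)\subseteq\mathcal{R}(E)$, and if $Ex=0$ then $A^kx=A^kEx=0$, so $\mathcal{N}(E)\subseteq\mathcal{N}(A^k)$. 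Combining these inclusions with part~(1) gives $\mathcal{R}(E)=\mathcal{R}(A^k)$ and $\mathcal{N}(E)=\mathcal{N}(A^k)$, which is exactly the asserted formula $AA^D=A^DA=P_{\mathcal{R}(A^k),\mathcal{N}(A^k)}$.

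Since every step is a one-line algebraic manipulation, there is no genuine obstacle; the only point requiring a little care is the bookkeeping in the induction establishing $A^D=(A^D)^{k+1}A^k$, and making sure the commutativity relation $AA^D=A^DA$ is invoked at the right moments so that powers of $A$ and $A^D$ may be interchanged freely. Note also that the complementarity $\mathbb{C}^n=\mathcal{R}(A^k)\oplus\mathcal{N}(A^k)$ — the concrete content of $k$ being the index of $A$ — comes out here as a byproduct of $E$ being idempotent with the stated range and kernel, so it need not be argued separately.
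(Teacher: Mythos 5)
Your proof is correct and complete. Note, however, that the paper does not prove this lemma at all: it is quoted as a known result from the cited reference (Theorem 2.1.4 of Wang--Wei--Qiao), so there is no in-paper argument to compare against. Your derivation directly from the three defining equations of the Drazin inverse --- the induction giving $A^D=(A^D)^{k+1}A^k$, the four range/null-space inclusions for part (1), and the identification of the idempotent $AA^D=A^DA$ as $P_{\mathcal{R}(A^k),\mathcal{N}(A^k)}$ --- is the standard textbook proof, and every step checks out (the one micro-step left implicit, passing from $A^k=A^kE$ to $A^k=EA^k$ so as to read off $\mathcal{R}(A^k)\subseteq\mathcal{R}(E)$, is covered by the commutativity you already invoke throughout).
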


And, a few limiting expressions of an outer inverse with prescribed range and null space are reviewed.

\begin{lemma}\cite[Theorem 2.1]{ATSlim}\label{limitlemma1}
Let  $A\in\mathbb{C}^{m\times n}$, $X \in\mathbb{C}^{n\times p}$ and $Y\in\mathbb{C}^{p\times m}$. If $A^{(2)}_{\mathcal{R}(XY),\mathcal{N}(XY)}$ exists, then
\begin{equation}\label{ATS2YZlimteq}
A^{(2)}_{\mathcal{R}(XY),\mathcal{N}(XY)}= {\rm lim}_{\lambda \to 0}X(\lambda I_p+YAX)^{-1}Y.
\end{equation}
\end{lemma}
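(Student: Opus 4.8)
The idea is to compress the $p\times p$ resolvent appearing on the right of \eqref{ATS2YZlimteq} down to an $r\times r$ resolvent whose ``core'' matrix turns out to be nonsingular, so that passing to the limit $\lambda\to0$ amounts to substituting $\lambda=0$.

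Concretely, I would first put $W=XY\in\mathbb{C}^{n\times m}$, $r={\rm rank}(W)$, and use the two push-through identities $X(\lambda I_p+YAX)^{-1}Y=XY(\lambda I_m+AXY)^{-1}=W(\lambda I_m+AW)^{-1}$, valid for all $\lambda$ outside the (finite) set of eigenvalues of $-YAX$, $-AXY$ and $-WA$, in particular for $\lambda\ne0$ near $0$. Taking a full-rank factorization $W=FE$ with $F\in\mathbb{C}^{n\times r}$ of full column rank and $E\in\mathbb{C}^{r\times m}$ of full row rank (so $\mathcal{R}(F)=\mathcal{R}(W)$ and $\mathcal{N}(E)=\mathcal{N}(W)$), one further push-through gives, for such $\lambda$,
\[
X(\lambda I_p+YAX)^{-1}Y=W(\lambda I_m+AW)^{-1}=FE(\lambda I_m+AFE)^{-1}=F(\lambda I_r+EAF)^{-1}E .
\]

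The heart of the argument is to show that $EAF\in\mathbb{C}^{r\times r}$ is invertible; this is exactly where the hypothesis that $G:=A^{(2)}_{\mathcal{R}(W),\mathcal{N}(W)}$ exists is used. Since $\mathcal{R}(G)=\mathcal{R}(F)$ and $\mathcal{N}(G)=\mathcal{N}(E)$ with $F$ left-cancellable and $E$ right-cancellable, $G$ admits a representation $G=FNE$ with an invertible $N\in\mathbb{C}^{r\times r}$ (invertibility of $N$ being forced by ${\rm rank}(G)=r$). Feeding this into the outer-inverse identity $GAG=G$ and cancelling $F$ on the left, $E$ on the right and $N$ on both sides yields $EAF=N^{-1}$, hence $EAF$ is invertible. (Alternatively one may quote the known criterion that $A^{(2)}_{\mathcal{R}(W),\mathcal{N}(W)}$ exists precisely when ${\rm rank}(WAW)={\rm rank}(W)$, which is the same as ${\rm rank}(EAF)=r$.)

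Once $EAF$ is invertible, $\lambda\mapsto F(\lambda I_r+EAF)^{-1}E$ is continuous at $\lambda=0$, so the limit in \eqref{ATS2YZlimteq} exists and equals $F(EAF)^{-1}E$; and $F(EAF)^{-1}E=FNE=G=A^{(2)}_{\mathcal{R}(XY),\mathcal{N}(XY)}$. If one prefers not to track $N$ explicitly, it is enough to verify that $H:=F(EAF)^{-1}E$ satisfies $HAH=H$, $\mathcal{R}(H)=\mathcal{R}(W)$ and $\mathcal{N}(H)=\mathcal{N}(W)$, and then invoke the uniqueness of the outer inverse with prescribed range and null space. I expect the only real obstacle to be the nonsingularity of $EAF$; the push-through manipulations and the range/null-space bookkeeping are routine.
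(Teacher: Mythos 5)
Your proof is correct. Note, however, that the paper does not prove this statement at all: it is imported verbatim as Lemma~\ref{limitlemma1} from \cite[Theorem 2.1]{ATSlim}, so there is no in-paper argument to compare against. Judged against the cited source, your route is genuinely different: Yuan and Zuo obtain the limit via the product singular value decomposition of the triple $(X,A,Y)$ (as their title indicates), whereas you reduce everything to a full-rank factorization $XY=FE$ through the push-through identities $X(\lambda I_p+YAX)^{-1}Y=W(\lambda I_m+AW)^{-1}=F(\lambda I_r+EAF)^{-1}E$, and then observe that existence of the outer inverse forces $EAF$ to be invertible (your computation $G=FNE$, $GAG=G\Rightarrow EAF=N^{-1}$ is exactly right, as is the equivalent rank criterion ${\rm rank}(WAW)={\rm rank}(W)$). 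Your argument is more elementary and self-contained, needs no spectral decomposition, and yields the closed form $F(EAF)^{-1}E$ for the limit as a by-product; the P-SVD approach buys explicit canonical coordinates that are useful for the computational aims of the original reference but are not needed for the bare limit statement. The only points worth making fully explicit in a written-up version are (i) that the push-through identities are legitimate because for all sufficiently small $\lambda\neq 0$ each of $\lambda I_p+YAX$, $\lambda I_m+AXY$ and $\lambda I_r+EAF$ is invertible (finitely many excluded values), and (ii) that $\mathcal{R}(W)=\mathcal{R}(F)$ and $\mathcal{N}(W)=\mathcal{N}(E)$, both of which you already note in passing.
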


\begin{lemma}\label{ATSlimitlemma}\cite[Theorem 2.4]{ATSlimit1}
 Let $A\in\mathbb{C}^{m\times n}$, and let $H\in\mathbb{C}^{n\times m}$ be such that $\mathcal{R}(H)=\mathcal{T}$  and $\mathcal{N}(H)=\mathcal{S}$, where $\mathcal{T}$  and $\mathcal{S}$ are subspaces of $\mathbb{C}^{n}$ and $\mathbb{C}^{m}$,  respectively. If $A_{\mathcal{T},\mathcal{S}}^{(2)}$ exists, then
\begin{align}
A_{\mathcal{T},\mathcal{S}}^{(2)}
&={\rm lim}_{\lambda \to 0}H(\lambda I_m+AH)^{-1}\label{ATS2Ilimiteq02}\\
&={\rm lim}_{\lambda \to 0}(\lambda I_n+HA)^{-1}H.\label{ATS2Ilimiteq03}
\end{align}
\end{lemma}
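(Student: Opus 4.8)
The plan is to obtain both identities as immediate specializations of Lemma \ref{limitlemma1}. The crucial observation is that in Lemma \ref{limitlemma1} the prescribed range and null space, $\mathcal{R}(XY)$ and $\mathcal{N}(XY)$, depend only on the product $XY$; so if one of the two factors is taken to be an identity matrix and the other to be $H$, then $XY=H$, whence $\mathcal{R}(XY)=\mathcal{R}(H)=\mathcal{T}$ and $\mathcal{N}(XY)=\mathcal{N}(H)=\mathcal{S}$. Consequently $A^{(2)}_{\mathcal{R}(XY),\mathcal{N}(XY)}=A^{(2)}_{\mathcal{T},\mathcal{S}}$, which exists by hypothesis, so Lemma \ref{limitlemma1} applies.

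To prove \eqref{ATS2Ilimiteq02}, I would apply Lemma \ref{limitlemma1} with $X:=H\in\mathbb{C}^{n\times m}$ and $Y:=I_m$, so that the auxiliary dimension is $p=m$. Then $XY=H$ and $YAX=AH\in\mathbb{C}^{m\times m}$, and the lemma yields
\[
A^{(2)}_{\mathcal{T},\mathcal{S}}=\lim_{\lambda\to 0}H(\lambda I_m+AH)^{-1}I_m=\lim_{\lambda\to 0}H(\lambda I_m+AH)^{-1}.
\]
To prove \eqref{ATS2Ilimiteq03}, I would apply Lemma \ref{limitlemma1} with $X:=I_n$ and $Y:=H\in\mathbb{C}^{n\times m}$, so that $p=n$. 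Then $XY=H$ and $YAX=HA\in\mathbb{C}^{n\times n}$, and the lemma yields
\[
A^{(2)}_{\mathcal{T},\mathcal{S}}=\lim_{\lambda\to 0}I_n(\lambda I_n+HA)^{-1}H=\lim_{\lambda\to 0}(\lambda I_n+HA)^{-1}H.
\]

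Since the argument is essentially a substitution, there is no genuine obstacle. The only thing to keep straight is the bookkeeping of the auxiliary dimension $p$, which is why the identity matrix is $I_m$ in the first formula and $I_n$ in the second; and one should recall that the invertibility of $\lambda I_p+YAX$ for all sufficiently small $\lambda\neq 0$, which the limits tacitly require, is already part of the content of Lemma \ref{limitlemma1}. If a self-contained derivation were wanted, one could instead put $HA$ and $AH$ into core-nilpotent block form and evaluate the resolvent limit block by block, but the two-line reduction above is shorter and is the route I would take.
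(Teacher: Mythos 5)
Your derivation is correct. Both specializations are legitimate: with $X=H$, $Y=I_m$ (so $p=m$) Lemma \ref{limitlemma1} gives exactly \eqref{ATS2Ilimiteq02}, and with $X=I_n$, $Y=H$ (so $p=n$) it gives exactly \eqref{ATS2Ilimiteq03}; in each case $XY=H$, so the prescribed range and null space are $\mathcal{T}$ and $\mathcal{S}$ and the existence hypothesis transfers verbatim. Note, however, that the paper does not prove this lemma at all: it is imported as a black box, cited to Wei's Theorem 2.4, just as Lemma \ref{limitlemma1} is imported from Yuan and Zuo. So your argument is not a reconstruction of the paper's (nonexistent) proof but a genuine observation that the second imported lemma is a corollary of the first. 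Wei's original proof proceeds quite differently --- it rests on the representation of $A^{(2)}_{\mathcal{T},\mathcal{S}}$ through a group inverse, essentially $A^{(2)}_{\mathcal{T},\mathcal{S}}=H(AH)^{\#}=(HA)^{\#}H$, followed by a core--nilpotent (block-resolvent) evaluation of the limit, which is the self-contained route you mention at the end. Your reduction buys economy and unity: only one limiting representation needs to be taken on faith, and the asymmetric-looking one-sided formulas \eqref{ATS2Ilimiteq02}--\eqref{ATS2Ilimiteq03} are exposed as the two degenerate factorizations $H=H\cdot I_m=I_n\cdot H$ of the same two-sided formula \eqref{ATS2YZlimteq}. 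The only caveat is chronological rather than logical: Wei's result (1998) predates the Yuan--Zuo lemma (2016), so as a matter of attribution the implication historically ran the other way, but as a proof your reduction is sound.
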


\section{The $\mathfrak{m}$-DMP inverse in Minkowski Space}\label{deminDMPsec}
The main purpose of this section is to introduce the $\mathfrak{m}$-DMP inverse in Minkowski space, and present some of its properties. We  begin with considering the following system of matrix equations, whose unique solution is defined  as the $\mathfrak{m}$-DMP inverse.

\begin{theorem}\label{uniqueDMPeqth}
Let  $A\in\mathbb{C}^{n\times n}_{k}$ with ${\rm rank}(A^{\sim}AA^{\sim})={\rm rank}(A)$. Then the system of matrix equations
\begin{equation}\label{MDMPeq}
\left.
  \begin{array}{cccc}
  XAX=X,   &  XA=A^{D}A,  &  A^kX=A^kA^{\mathfrak{m}},  \\
  \end{array}
\right.
\end{equation}
has the unique solution
\begin{equation*}
X=A^{D}AA^{\mathfrak{m}}.
\end{equation*}
\end{theorem}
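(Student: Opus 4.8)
The plan is to establish the claim in two stages: first verify that $X = A^{D}AA^{\mathfrak{m}}$ is a solution of \eqref{MDMPeq}, then show that any solution must equal it. A preliminary remark comes first: by Lemma \ref{HSmMpdecth} the hypothesis ${\rm rank}(A^{\sim}AA^{\sim})={\rm rank}(A)$ is equivalent to the existence of the Minkowski inverse $A^{\mathfrak{m}}$, so the proposed solution is well-defined. Throughout I will use only the defining identities $AA^{\mathfrak{m}}A=A$ (Definition \ref{minMPdefintion}) together with $A^{D}AA^{D}=A^{D}$, $A^{D}A=AA^{D}$ and $A^{k+1}A^{D}=A^{k}$ (Definition \ref{Drazininvdef}).

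Existence amounts to three short substitutions. For the second equation, $XA=A^{D}AA^{\mathfrak{m}}A=A^{D}(AA^{\mathfrak{m}}A)=A^{D}A$. For the first, $XAX=(XA)X=A^{D}A\cdot A^{D}AA^{\mathfrak{m}}=(A^{D}AA^{D})AA^{\mathfrak{m}}=A^{D}AA^{\mathfrak{m}}=X$. For the third, using $A^{D}A=AA^{D}$ and $A^{k+1}A^{D}=A^{k}$, one gets $A^{k}X=A^{k}A^{D}AA^{\mathfrak{m}}=A^{k+1}A^{D}A^{\mathfrak{m}}=A^{k}A^{\mathfrak{m}}$. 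Hence $X=A^{D}AA^{\mathfrak{m}}$ solves \eqref{MDMPeq}.

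For uniqueness, let $X$ be any solution of \eqref{MDMPeq}. Combining the first two equations gives $X=XAX=(XA)X=A^{D}AX$. On the other hand, since $A^{D}$ commutes with $A$ and $A^{D}A$ is idempotent (indeed $(A^{D}A)^{2}=(A^{D}AA^{D})A=A^{D}A$), we have $(A^{D})^{k}A^{k}=(A^{D}A)^{k}=A^{D}A$; multiplying the third equation $A^{k}X=A^{k}A^{\mathfrak{m}}$ on the left by $(A^{D})^{k}$ then yields $A^{D}AX=A^{D}AA^{\mathfrak{m}}$. Comparing with $X=A^{D}AX$ gives $X=A^{D}AA^{\mathfrak{m}}$, which is the desired conclusion.

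All the steps are routine manipulations with the Drazin and Minkowski inverses; the only point deserving a little attention is the cancellation identity $(A^{D})^{k}A^{k}=A^{D}A$ (equivalently, that $A^{D}A$ is the projector $P_{\mathcal{R}(A^{k}),\mathcal{N}(A^{k})}$, cf. Lemma \ref{Dinverseperle}), which is precisely what converts the third equation into usable information about $A^{D}AX$. I do not foresee any genuine obstacle.
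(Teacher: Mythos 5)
Your proof is correct and the uniqueness argument is essentially the paper's own: the chain $X = XAX = A^{D}AX = (A^{D}A)^{k}X = (A^{D})^{k}A^{k}A^{\mathfrak{m}} = A^{D}AA^{\mathfrak{m}}$ is exactly what the authors write. You are in fact slightly more complete, since you also verify that $A^{D}AA^{\mathfrak{m}}$ actually satisfies the three equations, a step the paper leaves implicit.
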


\begin{proof}
Using the condition \eqref{MDMPeq} and Lemma \ref{Dinverseperle}\eqref{Dinverseperitem2}, we have that
\begin{align*}
  X & =  XAX=A^DAX=(A^DA)^kX=(A^D)^kA^kA^{\mathfrak{m}}=A^DAA^{\mathfrak{m}},
\end{align*}
which completes the proof.
\end{proof}

\begin{definition}\label{minDMPdefinition}
Let  $A\in\mathbb{C}^{n\times n}_{k}$ with ${\rm rank}(A^{\sim}AA^{\sim})={\rm rank}(A)$. The   $\mathfrak{m}$-${\rm DMP}$ inverse of $A$  in Minkowski space, denoted by $A^{D,\mathfrak{m}}$, is defined as
\begin{equation} \label{MDMPeADAm}
  A^{D,\mathfrak{m}}=A^DAA^{\mathfrak{m}}.
\end{equation}

\end{definition}

\begin{remark}\label{mCoremDMPke1remark}
By comparing \cite[Theorem 2.9]{MCore} and   Definition  \ref{minDMPdefinition}, it is obvious  that the concept of the $\mathfrak{m}$-${\rm DMP}$ inverse  generalizes that of the  $\mathfrak{m}$-core inverse, which is denoted by $A^{\textcircled{m}}$. In other words,
if $A\in\mathbb{C}^{n\times n}_{1}$ satisfies ${\rm rank}(A^{\sim}AA^{\sim})={\rm rank}(A)$, then $A^{D,\mathfrak{m}}=A^{\textcircled{m}}$.
\end{remark}

 The following theorems give  canonical  representations of  the Minkowski inverse and the $\mathfrak{m}$-DMP inverse in terms of the  Hartwig-Spindelb\"{o}ck decomposition.

\begin{theorem}\label{HSmMpdecsecth}
Let $A\in\mathbb{C}^{n\times n}$ be given by \eqref{HSAdec}, let  $
 \Delta= \left(
            \begin{array}{cc}
              K & L \\
            \end{array}
          \right)
     U^*GU
         \left(
            \begin{array}{c}
              K^* \\ L^* \\
            \end{array}
          \right),
$
and let the partition of the Minkowski metric matrix $G\in\mathbb{C}^{n\times n}$ be
\begin{equation}\label{G1G2G3G4eq}
G=U \left(
           \begin{array}{cc}
             G_1 & G_2 \\
             G_2^* & G_4\\
           \end{array}
         \right) U^*,
 \end{equation}
 where $G_1\in\mathbb{C}^{r\times r}$, $G_2\in\mathbb{C}^{r\times (n-r)}$ and $G_4\in\mathbb{C}^{(n-r)\times (n-r)}$.
\begin{enumerate}[$(1)$]
  \item\label{HSmMpdecitem1} ${\rm rank}(A)={\rm rank}(AA^{\sim})$ if and only if $ \Delta$  is nonsingular.
  \item\label{HSmMpdecitem2} ${\rm rank}(A)={\rm rank}(A^{\sim}A)$ if and only if $G_1$ is nonsingular.
 \item\label{HSmMpdecitem3} If $ \Delta$  and $G_1$ are nonsingular, then
\begin{equation}\label{minMPdec01}
A^{\mathfrak{m}}=GU\left(
                     \begin{array}{cc}
                       K^*(G_1\Sigma\Delta)^{-1} & 0 \\
                       L^*(G_1\Sigma\Delta)^{-1} & 0 \\
                     \end{array}
                   \right)U^*G.
\end{equation}
\end{enumerate}
\end{theorem}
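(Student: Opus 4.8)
The plan is to reduce the whole statement to a single full-rank factorization of $A$ and two short block computations. Put $\widetilde{G}=U^{*}GU$, so $\widetilde{G}=\left(\begin{array}{cc}G_{1}&G_{2}\\ G_{2}^{*}&G_{4}\end{array}\right)$ by \eqref{G1G2G3G4eq}, and note $\widetilde{G}^{*}=\widetilde{G}$, $\widetilde{G}^{2}=I_{n}$ since $G^{*}=G=G^{-1}$. From \eqref{HSAdec} write $A=BC$ with $B=U\left(\begin{array}{c}\Sigma\\ 0\end{array}\right)\in\mathbb{C}^{n\times r}$ and $C=\left(\begin{array}{cc}K&L\end{array}\right)U^{*}\in\mathbb{C}^{r\times n}$; this is a full-rank factorization because $\Sigma$ is a nonsingular $r\times r$ matrix and $KK^{*}+LL^{*}=I_{r}$ makes $\left(\begin{array}{cc}K&L\end{array}\right)$ right invertible, hence of rank $r$ (the same identity also makes $\left(\begin{array}{c}K^{*}\\ L^{*}\end{array}\right)$ left invertible). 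Two direct block multiplications give the two identities on which everything rests,
\begin{equation*}
CGC^{*}=\left(\begin{array}{cc}K&L\end{array}\right)\widetilde{G}\left(\begin{array}{c}K^{*}\\ L^{*}\end{array}\right)=\Delta
\quad\text{and}\quad
B^{*}GB=\left(\begin{array}{cc}\Sigma&0\end{array}\right)\widetilde{G}\left(\begin{array}{c}\Sigma\\ 0\end{array}\right)=\Sigma G_{1}\Sigma ,
\end{equation*}
and, since $A$ is square, $A^{\sim}=GA^{*}G=GC^{*}B^{*}G$.

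For \eqref{HSmMpdecitem1} I would compute $AA^{\sim}=B(CGC^{*})B^{*}G=B\Delta B^{*}G=U\left(\begin{array}{cc}\Sigma\Delta\Sigma&0\\ 0&0\end{array}\right)U^{*}G$; as $U$, $G$ are invertible and $\Sigma$ is a nonsingular $r\times r$ matrix, ${\rm rank}(AA^{\sim})={\rm rank}(\Delta)$, and since $\Delta\in\mathbb{C}^{r\times r}$ with $r={\rm rank}(A)$, this equals ${\rm rank}(A)$ exactly when $\Delta$ is nonsingular. For \eqref{HSmMpdecitem2}, likewise $A^{\sim}A=GC^{*}(B^{*}GB)C=GU\left(\begin{array}{c}K^{*}\\ L^{*}\end{array}\right)(\Sigma G_{1}\Sigma)\left(\begin{array}{cc}K&L\end{array}\right)U^{*}$; since $\left(\begin{array}{c}K^{*}\\ L^{*}\end{array}\right)$ is left invertible and $\left(\begin{array}{cc}K&L\end{array}\right)$ is right invertible, multiplying by them on the respective sides preserves rank, so ${\rm rank}(A^{\sim}A)={\rm rank}(\Sigma G_{1}\Sigma)={\rm rank}(G_{1})$, which equals ${\rm rank}(A)$ exactly when the $r\times r$ matrix $G_{1}$ is nonsingular.

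For \eqref{HSmMpdecitem3}: when $\Delta$ and $G_{1}$ are nonsingular, writing $G_{r}$ for the order-$r$ Minkowski metric one has $CC^{\sim}=(CGC^{*})G_{r}=\Delta G_{r}$ and $B^{\sim}B=G_{r}(B^{*}GB)=G_{r}\Sigma G_{1}\Sigma$, both nonsingular, so Lemma \ref{HSmMpdecth}\eqref{minMPexsitconditem4} applies and gives $A^{\mathfrak{m}}=C^{\sim}(CC^{\sim})^{-1}(B^{\sim}B)^{-1}B^{\sim}$. Substituting $C^{\sim}=GC^{*}G_{r}$, $B^{\sim}=G_{r}B^{*}G$ and cancelling the factors $G_{r}$ (using only that $G_{r}$ is invertible) leaves $A^{\mathfrak{m}}=GC^{*}\Delta^{-1}(\Sigma G_{1}\Sigma)^{-1}B^{*}G$; finally inserting $C^{*}=U\left(\begin{array}{c}K^{*}\\ L^{*}\end{array}\right)$, $B^{*}=\left(\begin{array}{cc}\Sigma&0\end{array}\right)U^{*}$ and simplifying $\Delta^{-1}\Sigma^{-1}G_{1}^{-1}\Sigma^{-1}\left(\begin{array}{cc}\Sigma&0\end{array}\right)=\left(\begin{array}{cc}(G_{1}\Sigma\Delta)^{-1}&0\end{array}\right)$ reproduces \eqref{minMPdec01}.

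I expect the only real difficulty to be bookkeeping: carrying the two distinct Minkowski metrics ($G$ of order $n$ and $G_{r}$ of order $r$) through the factorization formula and confirming that $G_{r}$ cancels, plus being careful in \eqref{HSmMpdecitem1}--\eqref{HSmMpdecitem2} that rank is genuinely preserved by the one-sided invertible factors $\left(\begin{array}{cc}K&L\end{array}\right)$ and $\left(\begin{array}{c}K^{*}\\ L^{*}\end{array}\right)$. Once $A=BC$ and the two displayed identities are in hand, no remaining step is conceptually hard.
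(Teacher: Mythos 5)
Your proposal is correct and takes essentially the same route as the paper: items (1) and (2) are the same block-rank computations from the Hartwig--Spindelb\"{o}ck form (yours merely organized through the factorization $A=BC$ and the identities $CGC^{*}=\Delta$, $B^{*}GB=\Sigma G_{1}\Sigma$ from the outset), and item (3) applies Lemma \ref{HSmMpdecth}\eqref{minMPexsitconditem4} to exactly the same full-rank factorization, with the order-$r$ metric cancelling just as in the paper's calculation. All steps, including $(G_{1}\Sigma\Delta)^{-1}=\Delta^{-1}\Sigma^{-1}G_{1}^{-1}$ and the rank-preservation by the one-sided invertible factors, check out.
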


\begin{proof}
\eqref{HSmMpdecitem1}. Using the Hartwig-Spindelb\"{o}ck decomposition, we have \begin{align*}
{\rm rank}(A)={\rm rank}(AA^{\sim}) &\Leftrightarrow {\rm rank}(A)= {\rm rank}\left(
   \left(
     \begin{array}{cc}
       \Sigma K & \Sigma L \\
       0 & 0 \\
     \end{array}
   \right)
U^*GU
   \left(
     \begin{array}{cc}
       (\Sigma K)^* & 0 \\
       (\Sigma L)^* & 0 \\
     \end{array}
   \right)
   \right)\\
 &\Leftrightarrow  {\rm rank}(A)={\rm rank}\left(
 \left(
   \begin{array}{cc}
        K    &    L \\
   \end{array}
 \right)
U^*GU
 \left(
   \begin{array}{c}
    K^*  \\
     L^* \\
   \end{array}
 \right)
 \right),
\end{align*}
which is equivalent to that $\Delta$  is nonsingular.
\par
\eqref{HSmMpdecitem2}.
Since $\left(
  \begin{array}{cc}
    \Sigma K & \Sigma L\\
  \end{array}
\right)$ is of full row rank by $KK^*+LL^*=I_r$,
using again the Hartwig-Spindelb\"{o}ck decomposition we derive   that
\begin{align*}
{\rm rank}(A)={\rm rank}(A^{\sim}A) &\Leftrightarrow
{\rm rank}(A) ={\rm rank}\left(
   \left(
     \begin{array}{cc}
       (\Sigma K)^* & 0 \\
       (\Sigma L)^* & 0 \\
     \end{array}
   \right)\left(
           \begin{array}{cc}
             G_1 & G_2 \\
             G_2^* & G_4 \\
           \end{array}
         \right)
         \left(
     \begin{array}{cc}
       \Sigma K & \Sigma L \\
       0 & 0 \\
     \end{array}
   \right)
\right)\\
&\Leftrightarrow
{\rm rank}(A) ={\rm rank}\left(
\left(
  \begin{array}{cc}
    \Sigma K & \Sigma L\\
  \end{array}
\right)^*G_1
\left(
  \begin{array}{cc}
    \Sigma K & \Sigma L\\
  \end{array}
\right)
\right)\\
&\Leftrightarrow
{\rm rank}(A)={\rm rank}(G_1),
\end{align*}
which is equivalent to that $G_1$ is nonsingular.
\par
\eqref{HSmMpdecitem3}.
Note that $A$ given in \eqref{HSAdec} can be rewritten as
\begin{equation}\label{AHSdecrefullrankeq}
A=U\left(
     \begin{array}{c}
        \Sigma \\
       0 \\
     \end{array}
   \right)
   \left(\begin{array}{cc}
                K & L \\
            \end{array}
          \right)U^*,
\end{equation}
and
$B:=U\left(\begin{array}{c}
        \Sigma \\0 \\
     \end{array}
   \right)$
   and
$C:=\left(\begin{array}{cc}
                K & L \\
            \end{array}
          \right)U^*$
are of  full column rank and full  row rank, respectively.
If $ \Delta$  and $G_1$ are nonsingular, by Lemma \ref{HSmMpdecth}\eqref{minMPexsitconditem2} we see that $A^{\mathfrak{m}}$ exists.
Therefore, applying Lemma \ref{HSmMpdecth}\eqref{minMPexsitconditem4} to \eqref{AHSdecrefullrankeq} yields that
 \begin{align*}
  A^{\mathfrak{m}}= & C^{\sim}(CC^{\sim})^{-1}(B^{\sim}B)^{-1}B^{\sim}\\
    =& GU\left(
     \begin{array}{c}
        K^* \\
       L^* \\
     \end{array}
   \right)G\left(
   \left(\begin{array}{cc}
                K & L \\
            \end{array}
          \right)U^*GU\left(
     \begin{array}{c}
        K^* \\
       L^* \\
     \end{array}
   \right)G \right)^{-1}
   \\&
   \left(G\left(\begin{array}{cc}
               {\Sigma} & 0 \\
            \end{array}
          \right)U^*G
          U\left(\begin{array}{c}
        \Sigma \\0 \\
     \end{array}
   \right)
   \right)^{-1}G\left(\begin{array}{cc}
               {\Sigma} & 0 \\
            \end{array}
          \right)U^*G\\
  =&GU\left(
     \begin{array}{c}
        K^* \\
       L^* \\
     \end{array}
   \right){\Delta}^{-1}(\Sigma G_1 \Sigma)^{-1}\left(\begin{array}{cc}
               {\Sigma} & 0 \\
            \end{array}
          \right)U^*G\\
   =&GU\left(
                     \begin{array}{cc}
                       K^*(G_1\Sigma\Delta)^{-1} & 0 \\
                       L^*(G_1\Sigma\Delta)^{-1} & 0 \\
                     \end{array}
                   \right)U^*G,
 \end{align*}
 which completes the proof of this theorem.
\end{proof}

\begin{theorem}\label{MDMPHSdecth}
Let  $A\in\mathbb{C}^{n\times n}_{k}$  be given by \eqref{HSAdec} with ${\rm rank}(A^{\sim}AA^{\sim})={\rm rank}(A)$, and let $G_1$ and $G_2$ be given by \eqref{G1G2G3G4eq}. Then  $A^{D,\mathfrak{m}}$
 has the decompositions in the forms
 \begin{align}
A^{D,\mathfrak{m}}
&=
U\left(
  \begin{array}{cc}
  (\Sigma K)^{D}G_1^{-1} &0 \\
    0 & 0 \\
  \end{array}
\right)U^*G\label{minDMPHSdecGeq01}\\
&=
U\left(
  \begin{array}{cc}
  (\Sigma K)^{D} &(\Sigma K)^{D}G_1^{-1}G_2  \\
    0 & 0 \\
  \end{array}
\right)U^*.\label{minDMPHSdecGeq}
 \end{align}

\end{theorem}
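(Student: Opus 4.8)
The plan is to compute $A^{D,\mathfrak{m}}=A^{D}AA^{\mathfrak{m}}$ directly from the canonical forms already available, grouping the product as $A^{D}\bigl(AA^{\mathfrak{m}}\bigr)$. First I would record that the standing hypothesis ${\rm rank}(A^{\sim}AA^{\sim})={\rm rank}(A)$, via Lemma \ref{HSmMpdecth} (equivalence of items (1), (2), (3)), ensures that $A^{\mathfrak{m}}$ exists and that ${\rm rank}(AA^{\sim})={\rm rank}(A^{\sim}A)={\rm rank}(A)$; by Theorem \ref{HSmMpdecsecth}(1)--(2) this means exactly that $\Delta$ and $G_{1}$ are both nonsingular, so that $A^{\mathfrak{m}}$ is given by the explicit formula \eqref{minMPdec01}. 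I may then freely use \eqref{HSAdec} for $A$, \eqref{Ddec01} for $A^{D}$, \eqref{minMPdec01} for $A^{\mathfrak{m}}$, and the identity $U^{*}GU=\left(\begin{smallmatrix}G_{1}&G_{2}\\ G_{2}^{*}&G_{4}\end{smallmatrix}\right)$ coming from \eqref{G1G2G3G4eq}.

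The key step is to evaluate $AA^{\mathfrak{m}}$. Multiplying \eqref{HSAdec} by \eqref{minMPdec01} and inserting the block form of $U^{*}GU$, the only possibly nonzero block of the resulting product is $\Sigma\bigl[(KG_{1}+LG_{2}^{*})K^{*}+(KG_{2}+LG_{4})L^{*}\bigr](G_{1}\Sigma\Delta)^{-1}$, and the crucial observation is that the bracketed matrix is precisely $\Delta=\left(\begin{smallmatrix}K&L\end{smallmatrix}\right)U^{*}GU\left(\begin{smallmatrix}K^{*}\\ L^{*}\end{smallmatrix}\right)$ written out in blocks. Since $(G_{1}\Sigma\Delta)^{-1}=\Delta^{-1}\Sigma^{-1}G_{1}^{-1}$, that block collapses to $G_{1}^{-1}$, giving $AA^{\mathfrak{m}}=U\left(\begin{smallmatrix}G_{1}^{-1}&0\\ 0&0\end{smallmatrix}\right)U^{*}G$. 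As a sanity check, this matrix is idempotent, in agreement with $AA^{\mathfrak{m}}=P_{\mathcal{R}(A),\mathcal{N}(A^{\sim})}$ from Lemma \ref{mMPprole}\eqref{AAmMPeq}.

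It then remains to left-multiply by $A^{D}$ in the form \eqref{Ddec01}: the second block column of $A^{D}$ is annihilated by the zero block of $AA^{\mathfrak{m}}$, and one reads off $A^{D,\mathfrak{m}}=U\left(\begin{smallmatrix}(\Sigma K)^{D}G_{1}^{-1}&0\\ 0&0\end{smallmatrix}\right)U^{*}G$, which is \eqref{minDMPHSdecGeq01}. To obtain \eqref{minDMPHSdecGeq} I would substitute $U^{*}G=(U^{*}GU)U^{*}=\left(\begin{smallmatrix}G_{1}&G_{2}\\ G_{2}^{*}&G_{4}\end{smallmatrix}\right)U^{*}$ into this expression and perform the last block multiplication, using $G_{1}^{-1}G_{1}=I_{r}$. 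No real difficulty is involved; the argument is block bookkeeping, and the single point requiring attention is recognizing the bracketed expression above as $\Delta$, which is what makes the middle factor telescope.
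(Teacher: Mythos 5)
Your proposal is correct and follows essentially the same route as the paper: both substitute \eqref{HSAdec}, \eqref{Ddec01} and \eqref{minMPdec01} into $A^{D,\mathfrak{m}}=A^{D}AA^{\mathfrak{m}}$ and rely on the same telescoping $\Sigma\Delta\cdot(G_1\Sigma\Delta)^{-1}=G_1^{-1}$, the paper recognizing $\Delta$ via the factorization $\left(\begin{smallmatrix}K&L\end{smallmatrix}\right)U^*GU\left(\begin{smallmatrix}K^*\\L^*\end{smallmatrix}\right)$ while you identify its expanded block form inside $AA^{\mathfrak{m}}$. The only difference is bookkeeping (you group as $A^{D}(AA^{\mathfrak{m}})$ and first derive the pleasant intermediate $AA^{\mathfrak{m}}=U\left(\begin{smallmatrix}G_1^{-1}&0\\0&0\end{smallmatrix}\right)U^*G$), plus the welcome explicit remark that the rank hypothesis makes $\Delta$ and $G_1$ nonsingular.
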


\begin{proof}
Inserting \eqref{HSAdec}, \eqref{Ddec01} and \eqref{minMPdec01} to \eqref{MDMPeADAm},  by  direct calculation we infer that
\begin{align*}
A^{D,\mathfrak{m}}
=&U\left(
     \begin{array}{cc}
       (\Sigma K)^D & \left((\Sigma K)^D\right)^2\Sigma L \\
       0 & 0 \\
     \end{array}
   \right)\left(
     \begin{array}{cc}
       \Sigma K & \Sigma L \\ 0 & 0 \\
     \end{array}
   \right)U^* GU\left(
                     \begin{array}{cc}
                       K^*(G_1\Sigma\Delta)^{-1} & 0 \\
                       L^*(G_1\Sigma\Delta)^{-1} & 0 \\
                     \end{array}
                   \right)U^*G\\
   =& U\left(
     \begin{array}{cc}
        (\Sigma K)^D  & \left((\Sigma K)^D\right)^2\Sigma L \\
       0 & 0 \\
     \end{array}
   \right)\left(
         \begin{array}{c}
            \Sigma  \\
           0 \\
         \end{array}
       \right)\left(
         \begin{array}{cc}
      K &L\\
         \end{array}
       \right)
       U^*GU
       \left(
         \begin{array}{c}
            K^*  \\
           L^* \\
         \end{array}
       \right)
       \left(
         \begin{array}{cc}
      \Delta^{-1}\Sigma^{-1}G_1^{-1} & 0 \\
         \end{array}
       \right)U^*G\\
      =& U\left(
     \begin{array}{cc}
       (\Sigma K)^D  & \left((\Sigma K)^D\right)^2\Sigma L \\
       0 & 0 \\
     \end{array}
   \right)\left(
         \begin{array}{c}
            \Sigma  \\
           0 \\
         \end{array}
       \right)\Delta
       \left(
         \begin{array}{cc}
      \Delta^{-1}\Sigma^{-1}G_1^{-1} & 0 \\
         \end{array}
       \right)U^*G\\
=&U\left(
  \begin{array}{cc}
  (\Sigma K)^{D}G_1^{-1} &0 \\
    0 & 0 \\
  \end{array}
\right)U^*G =U\left(
  \begin{array}{cc}
  (\Sigma K)^{D}G_1^{-1} &0 \\
    0 & 0 \\
  \end{array}
\right)
  \left(
           \begin{array}{cc}
             G_1 & G_2 \\
             G_2^* & G_4 \\
           \end{array}
         \right)U^*\\
=&
U\left(
  \begin{array}{cc}
  (\Sigma K)^{D} &(\Sigma K)^{D}G_1^{-1}G_2  \\
    0 & 0 \\
  \end{array}
\right)U^*,
\end{align*}
which completes the proof.
\end{proof}

\begin{remark}
Under the hypotheses of Theorem \ref{MDMPHSdecth}, we have an another outer inverse associated with $A$, that is, $A^{\mathfrak{m},D}=A^{\mathfrak{m}}AA^{D}$, which is  called the dual $\mathfrak{m}$-${\rm DMP}$ inverse of $A$  in Minkowski space. Using  the method analogous to the proof  of \eqref{minDMPHSdecGeq01}, we have
\begin{equation*}
A^{\mathfrak{m},D}=GU
\left(
  \begin{array}{cc}
    K^*\Delta^{-1}K(\Sigma K)^{D} &  K^*\Delta^{-1}K((\Sigma K)^{D})^2\Sigma L \\
    L^*\Delta^{-1}K(\Sigma K)^{D} & L^*\Delta^{-1}K((\Sigma K)^{D})^2\Sigma L \\
  \end{array}
\right)U^*.
\end{equation*}
It is expected that $A^{\mathfrak{m},D}$ will have properties similar to that of $A^{D,\mathfrak{m}}$.
\end{remark}

\begin{example}\label{MinDMPmainExample}
 Let
\begin{equation*}
  A=\left(
  \begin{array}{ccccc}
1&0&0&0&0\\1&0&0&1&0\\0&0&0&1&0\\0&0&0&0&0\\0&0&0&0&0\\
  \end{array}
\right).
\end{equation*}
Then ${\rm rank}(A^{\sim}AA^{\sim})={\rm rank}(A)=2$,
\begin{align*}
  A^{\dag}&=\left(
  \begin{array}{ccccc}
0.66667&0.33333&-0.33333&0&0\\0&0&0&0&0\\0&0&0&0&0\\ -0.33333&0.33333&0.66667&0&0\\0&0&0&0&0\\
  \end{array}
\right),
A^{D}=\left(
  \begin{array}{ccccc}
1&0&0&0&0\\1&0&0&0&0\\0&0&0&0&0\\0&0&0&0&0\\0&0&0&0&0\\
  \end{array}
\right),\\
A^{D,\dag}&= \left(
  \begin{array}{ccccc}
0.66667&0.33333&-0.33333&0&0\\0.66667&0.33333&-0.33333&0&0 \\0&0&0&0&0\\0&0&0&0&0\\0&0&0&0&0\\
  \end{array}
\right),
A^{\mathfrak{m}}= \left(
  \begin{array}{ccccc}
2&-1&1&0&0\\0&0&0&0&0\\0&0&0&0&0\\-1&1&0&0&0\\0&0&0&0&0\\
  \end{array}
\right),\\
A^{D,\mathfrak{m}}&=\left(
  \begin{array}{ccccc}
2&-1&1&0&0\\2&-1&1&0&0\\0&0&0&0&0\\0&0&0&0&0\\0&0&0&0&0\\
  \end{array}
\right),
A^{\mathfrak{m},D}=\left(
  \begin{array}{ccccc}
1&0&0&0&0\\0&0&0&0&0\\0&0&0&0&0\\0&0&0&0&0\\0&0&0&0&0\\
  \end{array}
\right).
\end{align*}
Clearly, $A^{D,\mathfrak{m}}$ is a new generalized inverse of $A$, which is different from $A^{\dag}$, $A^{D}$, $A^{D,\dag}$ and $A^{\mathfrak{m}}$.
It  also shows that $A^{D,\mathfrak{m}}$ and $A^{\mathfrak{m},D}$ are different.
\end{example}

The following theorem gives  some basic properties of the $\mathfrak{m}$-DMP inverse, which show that the $\mathfrak{m}$-DMP inverse is an outer inverse with prescribed  range and null space.

\begin{theorem}\label{MDMPATSth}
Let  $A\in\mathbb{C}^{n\times n}_{k}$ with ${\rm rank}(A^{\sim}AA^{\sim})={\rm rank}(A)$.  Then,
\begin{enumerate}[$(1)$]
\item\label{minDMPperitem1} ${\rm rank}(A^{D,\mathfrak{m}})={\rm rank}(A^k)$;
\item \label{minDMPperitem2} $\mathcal{R}(A^{D,\mathfrak{m}})=\mathcal{R}(A^k)$ and $\mathcal{N}(A^{D,\mathfrak{m}})=\mathcal{N}(A^kA^{\mathfrak{m}})$;

\item\label{minDMPperitem3}
 $A^{D,\mathfrak{m}}=A^{(2)}_{\mathcal{R}(A^{k}),\mathcal{N}(A^kA^{\mathfrak{m}})}$;
  \item \label{minDMPperitem4} $AA^{D,\mathfrak{m}}=P_{\mathcal{R}(A^{k}),\mathcal{N}(A^kA^{\mathfrak{m}})}$;
  \item\label{minDMPperitem5} $A^{D,\mathfrak{m}}A=A^DA=P_{\mathcal{R}(A^{k}),\mathcal{N}(A^{k})}$.
\end{enumerate}
\end{theorem}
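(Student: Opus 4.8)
The plan is to deduce all five items from the explicit formula $A^{D,\mathfrak{m}}=A^DAA^{\mathfrak{m}}$ of \eqref{MDMPeADAm}, together with the standard Drazin identities $AA^D=A^DA=P_{\mathcal{R}(A^k),\mathcal{N}(A^k)}$, $A^DAA^D=A^D$ and $A^{k+1}A^D=A^k$ (Definition~\ref{Drazininvdef} and Lemma~\ref{Dinverseperle}), the fact that $A^{\mathfrak{m}}$ is an inner inverse of $A$, i.e.\ $AA^{\mathfrak{m}}A=A$ (Definition~\ref{minMPdefintion}), and the uniqueness of the outer inverse with prescribed range and null space. I would prove the items in the order $(5)$, $(2)$, $(1)$, $(3)$, $(4)$, so that the later statements rest on the earlier ones.

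\emph{Item $(5)$.} Using $AA^{\mathfrak{m}}A=A$ and then Lemma~\ref{Dinverseperle}\eqref{Dinverseperitem2},
\[
A^{D,\mathfrak{m}}A=A^DA\,A^{\mathfrak{m}}A=A^D(AA^{\mathfrak{m}}A)=A^DA=P_{\mathcal{R}(A^k),\mathcal{N}(A^k)}.
\]
\emph{Item $(2)$.} For the range, $A^{D,\mathfrak{m}}=A^DAA^{\mathfrak{m}}$ gives $\mathcal{R}(A^{D,\mathfrak{m}})\subseteq\mathcal{R}(A^D)=\mathcal{R}(A^k)$ by Lemma~\ref{Dinverseperle}\eqref{Dinverseperitem1}, while $(5)$ gives $\mathcal{R}(A^k)=\mathcal{R}(A^DA)=\mathcal{R}(A^{D,\mathfrak{m}}A)\subseteq\mathcal{R}(A^{D,\mathfrak{m}})$; hence $\mathcal{R}(A^{D,\mathfrak{m}})=\mathcal{R}(A^k)$. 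For the null space I would record the two identities
\[
A^kA^{D,\mathfrak{m}}=A^k(A^DA)A^{\mathfrak{m}}=A^kA^{\mathfrak{m}},\qquad A^{D,\mathfrak{m}}=(A^DA)A^{\mathfrak{m}}=(A^D)^kA^kA^{\mathfrak{m}},
\]
where the first uses $A^kA^DA=A^{k+1}A^D=A^k$ and the second uses that $A^DA$ is idempotent, so $A^DA=(A^DA)^k=(A^D)^kA^k$. The first identity yields $\mathcal{N}(A^{D,\mathfrak{m}})\subseteq\mathcal{N}(A^kA^{D,\mathfrak{m}})=\mathcal{N}(A^kA^{\mathfrak{m}})$, the second yields $\mathcal{N}(A^kA^{\mathfrak{m}})\subseteq\mathcal{N}((A^D)^kA^kA^{\mathfrak{m}})=\mathcal{N}(A^{D,\mathfrak{m}})$, so $\mathcal{N}(A^{D,\mathfrak{m}})=\mathcal{N}(A^kA^{\mathfrak{m}})$.

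\emph{Items $(1)$, $(3)$, $(4)$.} Item $(1)$ is then immediate: ${\rm rank}(A^{D,\mathfrak{m}})=\dim\mathcal{R}(A^{D,\mathfrak{m}})=\dim\mathcal{R}(A^k)={\rm rank}(A^k)$ (equivalently it can be read off \eqref{minDMPHSdecGeq01}, since $U$, $G$ and $G_1$ are nonsingular and ${\rm rank}((\Sigma K)^D)={\rm rank}(A^D)={\rm rank}(A^k)$ by \eqref{Ddec01} and Lemma~\ref{Dinverseperle}\eqref{Dinverseperitem1}). For $(3)$, $A^{D,\mathfrak{m}}$ is an outer inverse of $A$, since by $(5)$ and $A^DAA^D=A^D$,
\[
A^{D,\mathfrak{m}}AA^{D,\mathfrak{m}}=(A^DA)(A^DAA^{\mathfrak{m}})=(A^DAA^D)AA^{\mathfrak{m}}=A^DAA^{\mathfrak{m}}=A^{D,\mathfrak{m}}
\]
(this is also the first equation of \eqref{MDMPeq}, already verified in Theorem~\ref{uniqueDMPeqth}); combining this with $(2)$ and the uniqueness of the outer inverse with given range and null space gives $A^{D,\mathfrak{m}}=A^{(2)}_{\mathcal{R}(A^k),\mathcal{N}(A^kA^{\mathfrak{m}})}$. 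For $(4)$, since $A^{D,\mathfrak{m}}$ is an outer inverse, $AA^{D,\mathfrak{m}}$ is idempotent, with $\mathcal{N}(AA^{D,\mathfrak{m}})=\mathcal{N}(A^{D,\mathfrak{m}})=\mathcal{N}(A^kA^{\mathfrak{m}})$ and $\mathcal{R}(AA^{D,\mathfrak{m}})=A\,\mathcal{R}(A^{D,\mathfrak{m}})=A\,\mathcal{R}(A^k)=\mathcal{R}(A^{k+1})=\mathcal{R}(A^k)$, the last step because ${\rm Ind}(A)=k$; an idempotent being the projector onto its range along its null space, $AA^{D,\mathfrak{m}}=P_{\mathcal{R}(A^k),\mathcal{N}(A^kA^{\mathfrak{m}})}$.

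\emph{Main obstacle.} The only place that needs genuine care is the null-space half of $(2)$: one must produce \emph{both} $A^kA^{D,\mathfrak{m}}=A^kA^{\mathfrak{m}}$ (which rests on $A^kA^DA=A^k$) and $A^{D,\mathfrak{m}}=(A^D)^kA^kA^{\mathfrak{m}}$ (which rests on $A^DA=(A^D)^kA^k$), both being consequences of the index-$k$ hypothesis and the commutativity of $A$ with $A^D$. Everything else is a routine recombination of the projector identities listed at the outset. As a fallback, all five items can also be verified mechanically from the forms \eqref{minDMPHSdecGeq01}--\eqref{minDMPHSdecGeq} of Theorem~\ref{MDMPHSdecth}, but the argument above avoids those computations and adapts verbatim to the dual $\mathfrak{m}$-DMP inverse $A^{\mathfrak{m},D}$.
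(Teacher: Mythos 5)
Your proof is correct and follows essentially the same route as the paper: everything is derived from the formula $A^{D,\mathfrak{m}}=A^DAA^{\mathfrak{m}}$ together with the standard Drazin identities, the inner-inverse property $AA^{\mathfrak{m}}A=A$, and the uniqueness of the outer inverse with prescribed range and null space. The only differences are cosmetic — you reorder the items and replace a couple of the paper's rank-sandwich arguments (for $\mathcal{N}(A^{D,\mathfrak{m}})=\mathcal{N}(A^kA^{\mathfrak{m}})$ and $\mathcal{N}(AA^{D,\mathfrak{m}})=\mathcal{N}(A^{D,\mathfrak{m}})$) by direct two-sided inclusions, which is equally valid.
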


\begin{proof}
\eqref{minDMPperitem1}. It follows from Lemma \ref{Dinverseperle}   that
  \begin{equation*}
  {\rm rank}(A^D) ={\rm rank}(A^DA)={\rm rank}(A^DAA^{\mathfrak{m}}A) \leq{\rm rank}(A^DAA^{\mathfrak{m}})\leq {\rm rank}(A^D),
  \end{equation*}
  which, together with \eqref{MDMPeADAm},  shows that ${\rm rank}(A^{D,\mathfrak{m}})={\rm rank}(A^k)$.\par

\eqref{minDMPperitem2}. Using  \eqref{MDMPeADAm} and the item \eqref{minDMPperitem1},  we have $\mathcal{R}(A^{D,\mathfrak{m}})=\mathcal{R}(A^k)$ directly.
Since
\begin{equation*}
 {\rm rank}(A^{k})={\rm rank}(A^kA^{\mathfrak{m}}A)\leq{\rm rank}(A^kA^{\mathfrak{m}})\leq {\rm rank}(A^k),
\end{equation*}
again by the item \eqref{minDMPperitem1} we get
\begin{equation}\label{rankAKAmerankAkeq}
  {\rm rank}(A^kA^{\mathfrak{m}})=  {\rm rank}(A^{k})= {\rm rank}(A^{D,\mathfrak{m}}).
\end{equation}
Moreover,
\begin{equation*}
 \mathcal{N}(A^kA^{\mathfrak{m}})\subseteq   \mathcal{N}((A^DA)^kA^{\mathfrak{m}})= \mathcal{N}(A^DAA^{\mathfrak{m}})=\mathcal{N}(A^{D,\mathfrak{m}}),
\end{equation*}
implying  $\mathcal{N}(A^{D,\mathfrak{m}})= \mathcal{N}(A^kA^{\mathfrak{m}})$.
\par
\eqref{minDMPperitem3}. It is obvious   by   \eqref{MDMPeq} and the item \eqref{minDMPperitem2}.
\par
\eqref{minDMPperitem4}.
In terms of ${\rm Ind}(A)=k$ and the item \eqref{minDMPperitem2}, we infer that
\begin{equation*}
\mathcal{R}(AA^{D,\mathfrak{m}})= A\mathcal{R}(A^{k})=\mathcal{R}(A^{k+1})=\mathcal{R}(A^{k}).
\end{equation*}
Evidently, ${\rm rank}(AA^{D,\mathfrak{m}})={\rm rank}(A^{k})$, which, together with the items \eqref{minDMPperitem1}  and  \eqref{minDMPperitem2},
shows that
\begin{equation*}
  \mathcal{N}(AA^{D,\mathfrak{m}})=\mathcal{N}(A^{D,\mathfrak{m}})=\mathcal{N}(A^kA^{\mathfrak{m}}).
\end{equation*}
Then, since  $A^{D,\mathfrak{m}}$ is an outer inverse of $A$,  we have   $AA^{D,\mathfrak{m}}=P_{\mathcal{R}(A^{k}), \mathcal{N}(A^kA^{\mathfrak{m}})}$.
\par
 \eqref{minDMPperitem5}. It is easily  obtained  by \eqref{MDMPeADAm} and  Lemma \ref{Dinverseperle}\eqref{Dinverseperitem2}.
\end{proof}

It is a popular approach to  characterize  generalized inverses  from the geometric point of view,  for example,  \cite[Definiton 1]{Coreinverse},  \cite[Theorem 2.13]{DMPorignre}, \cite[Theorem 3.2]{DMPfutre7},  etc. So, the following theorem  presents a geometric characterization of the $\mathfrak{m}$-DMP inverse.

\begin{theorem}\label{MDMPgeoth}
Let  $A\in\mathbb{C}^{n\times n}_{k}$ with ${\rm rank}(A^{\sim}AA^{\sim})={\rm rank}(A)$. Then $A^{D,\mathfrak{m}}$ is the unique matrix $X\in\mathbb{C}^{n\times n}$ such that
\begin{equation}\label{MDMPgeotheq}
\left.
  \begin{array}{cccc}
     AX=P_{\mathcal{R}(A^{k}),\mathcal{N}(A^kA^{\mathfrak{m}})},    &     \mathcal{R}(X)\subseteq \mathcal{R}({A^k}).  \\
  \end{array}
\right.
\end{equation}
\end{theorem}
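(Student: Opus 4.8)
The plan is to prove the statement in two stages: first verify that $X=A^{D,\mathfrak{m}}$ satisfies the two conditions in \eqref{MDMPgeotheq}, and then establish uniqueness. Both parts will rely almost entirely on facts already recorded in Theorem \ref{MDMPATSth}, together with the index-$k$ splitting $\mathbb{C}^{n}=\mathcal{R}(A^{k})\oplus\mathcal{N}(A^{k})$ coming from ${\rm Ind}(A)=k$.

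For the existence part, item \eqref{minDMPperitem4} of Theorem \ref{MDMPATSth} gives precisely $AA^{D,\mathfrak{m}}=P_{\mathcal{R}(A^{k}),\mathcal{N}(A^{k}A^{\mathfrak{m}})}$, which is the first requirement, while item \eqref{minDMPperitem2} gives $\mathcal{R}(A^{D,\mathfrak{m}})=\mathcal{R}(A^{k})\subseteq\mathcal{R}(A^{k})$, which is the second. Hence $X=A^{D,\mathfrak{m}}$ satisfies \eqref{MDMPgeotheq}.

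For uniqueness, suppose $X_{1}$ and $X_{2}$ both satisfy \eqref{MDMPgeotheq}. Subtracting the two copies of the first identity yields $A(X_{1}-X_{2})=0$, that is $\mathcal{R}(X_{1}-X_{2})\subseteq\mathcal{N}(A)$; combined with the range inclusions $\mathcal{R}(X_{i})\subseteq\mathcal{R}(A^{k})$ this forces $\mathcal{R}(X_{1}-X_{2})\subseteq\mathcal{R}(A^{k})\cap\mathcal{N}(A)$. Since $\mathcal{N}(A)\subseteq\mathcal{N}(A^{k})$ and $\mathcal{R}(A^{k})\cap\mathcal{N}(A^{k})=\{0\}$ by ${\rm Ind}(A)=k$, this intersection is trivial, so $X_{1}=X_{2}$.

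There is no genuine obstacle here: the only ingredient beyond bookkeeping is the observation that $\mathcal{R}(A^{k})$ meets $\mathcal{N}(A)$ trivially, which is just the core–nilpotent (index-$k$) decomposition. An equivalent route to uniqueness, which also makes $X$ explicit, is to multiply $AX=P_{\mathcal{R}(A^{k}),\mathcal{N}(A^{k}A^{\mathfrak{m}})}$ on the left by $A^{D}$: since $A^{D}A=P_{\mathcal{R}(A^{k}),\mathcal{N}(A^{k})}$ by Lemma \ref{Dinverseperle}\eqref{Dinverseperitem2} acts as the identity on $\mathcal{R}(A^{k})$, the condition $\mathcal{R}(X)\subseteq\mathcal{R}(A^{k})$ gives $X=A^{D}AX=A^{D}P_{\mathcal{R}(A^{k}),\mathcal{N}(A^{k}A^{\mathfrak{m}})}$, which is uniquely determined; one then checks, using $P_{\mathcal{R}(A^{k}),\mathcal{N}(A^{k}A^{\mathfrak{m}})}=AA^{D,\mathfrak{m}}$ and $A^{D}AA^{D,\mathfrak{m}}=A^{D,\mathfrak{m}}$, that this coincides with $A^{D,\mathfrak{m}}$.
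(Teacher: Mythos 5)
Your proposal is correct and follows essentially the same route as the paper's own proof: existence via Theorem \ref{MDMPATSth}\eqref{minDMPperitem2} and \eqref{minDMPperitem4}, and uniqueness by showing $\mathcal{R}(X_1-X_2)\subseteq\mathcal{R}(A^k)\cap\mathcal{N}(A^k)=\{0\}$. The alternative uniqueness argument you sketch (left-multiplying by $A^D$) is a valid bonus but not needed.
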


\begin{proof}
Obviously, from   Theorem \ref{MDMPATSth}\eqref{minDMPperitem2} and \eqref{minDMPperitem4}, we see that $A^{D,\mathfrak{m}}$ is a solution to \eqref{MDMPgeotheq}.  Then, we will prove the uniqueness of the solution of  \eqref{MDMPgeotheq}.
Assume that both $X_1$ and $X_2$ are such that \eqref{MDMPgeotheq}. Thus, since $AX_1=AX_2=P_{\mathcal{R}(A^{k}),\mathcal{N}(A^kA^{\mathfrak{m}})}$, we get $\mathcal{R}(X_1-X_2)\subseteq \mathcal{N}(A)\subseteq\mathcal{N}(A^k)$.
Moreover, it follows from $\mathcal{R}(X_1)\subseteq \mathcal{R}({A^k})$ and $\mathcal{R}(X_2)\subseteq \mathcal{R}({A^k})$ that $\mathcal{R}(X_1-X_2)\subseteq  \mathcal{R}(A^k)$. Hence,  according to  ${\rm Ind}(A)=k$, we directly obtain that
\begin{equation*}
\mathcal{R}(X_1-X_2)\subseteq  \mathcal{R}(A^k)\cap\mathcal{N}(A^k)=\{0\},
\end{equation*}
i.e.,  $X_1=X_2$. Therefore, $A^{D,\mathfrak{m}}$ is the unique solution to \eqref{MDMPgeotheq}.
\end{proof}

\begin{remark}
Let  $A\in\mathbb{C}^{n\times n}_1$ with ${\rm rank}(A^{\sim}AA^{\sim})={\rm rank}(A)$. In terms of \cite[Theorem 2.7(I) and Theorem 2.9]{MCore} and Lemma  \ref{mMPprole}\eqref{mMPRN}, it can easily be  obtained that $A^{\textcircled{m}}=A^{(1,2)}_{\mathcal{R}(A),\mathcal{N}(A^{\sim})}$. And, in terms of Remark \ref{mCoremDMPke1remark}, it is a direct corollary of Theorem \ref{MDMPgeoth} that $A^{\textcircled{m}}$ is the unique matrix $X\in\mathbb{C}^{n\times n}$ such that
$AX=P_{\mathcal{R}(A),\mathcal{N}(A^{\sim})}$ and $\mathcal{R}(X)\subseteq \mathcal{R}({A})$.
\end{remark}

\par
The following theorem shows some new properties of the $\mathfrak{m}$-DMP inverse, which inherit from that of the DMP  inverse \cite[Proposition 2.14]{DMPorignre}.

\begin{theorem}\label{mDMPfurperth}
Let  $A\in\mathbb{C}^{n\times n}_{k}$  be given by \eqref{HSAdec} with ${\rm rank}(A^{\sim}AA^{\sim})={\rm rank}(A)$. Then,
\begin{enumerate}[$(1)$]
  \item\label{mDMPfurperitem1} $A^{D,\mathfrak{m}}=A^{D}P_{\mathcal{R}(A),\mathcal{N}(A^{\sim})}$;
  \item\label{mDMPfurperitem2}  $(A^{D,\mathfrak{m}})^l
  =\left\{\begin{array}{ll}
              (A^DA^{\mathfrak{m}})^{\frac{l}{2}}, &\text{if } l  \text{ is even,}\\
              A(A^DA^{\mathfrak{m}})^{\frac{l+1}{2}}, &\text{if } l  \text{ is odd;} \\
            \end{array}\right.$
  \item\label{mDMPfurperitem3}  $A^{D,\mathfrak{m}}= (A^2A^{\mathfrak{m}})^{D}$;
  \item\label{mDMPfurperitem4}  $((A^{D,\mathfrak{m}})^D)^D=A^{D,\mathfrak{m}}$;
  \item\label{mDMPfurperitem7}
  $AA^{D,\mathfrak{m}}=A^{D,\mathfrak{m}}A$ if and only if
  $A^{D,\mathfrak{m}}=A^{D}$   if and only if  $\mathcal{N}(A^{\sim})\subseteq \mathcal{N}(A^{k})$;
  \item\label{mDMPfurperitem5}  $A^{D,\mathfrak{m}}=0$ if and only if $A$ is nilpotent;
  \item\label{mDMPfurperitem6}  $A^{D,\mathfrak{m}}=A$ if and only if $(\Sigma K)^{D}=\Sigma L$ and $L=KG_1^{-1}G_2$, where $G_1$ and $G_2$ are given by \eqref{G1G2G3G4eq}.
\end{enumerate}

\end{theorem}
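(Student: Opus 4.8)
The plan is to deduce all seven statements from the defining identity $A^{D,\mathfrak{m}}=A^{D}AA^{\mathfrak{m}}$ together with a small stock of elementary facts that are already available: $AA^{D}=A^{D}A=P_{\mathcal{R}(A^{k}),\mathcal{N}(A^{k})}$ and $\mathcal{N}(A^{D})=\mathcal{N}(A^{k})$ (Lemma~\ref{Dinverseperle}), $AA^{\mathfrak{m}}A=A$ (Definition~\ref{minMPdefintion}), $AA^{\mathfrak{m}}=P_{\mathcal{R}(A),\mathcal{N}(A^{\sim})}$ (Lemma~\ref{mMPprole}), the observation that $AA^{\mathfrak{m}}$ restricts to the identity on $\mathcal{R}(A)\supseteq\mathcal{R}(A^{k})=\mathcal{R}(A^{D,\mathfrak{m}})$, and the properties of $A^{D,\mathfrak{m}}$ already proved in Theorems~\ref{MDMPATSth} and~\ref{MDMPHSdecth}. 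With these, item~\eqref{mDMPfurperitem1} is just the substitution of $AA^{\mathfrak{m}}=P_{\mathcal{R}(A),\mathcal{N}(A^{\sim})}$ into~\eqref{MDMPeADAm}, and item~\eqref{mDMPfurperitem5} is immediate from ${\rm rank}(A^{D,\mathfrak{m}})={\rm rank}(A^{k})$ (Theorem~\ref{MDMPATSth}\eqref{minDMPperitem1}): $A^{D,\mathfrak{m}}=0$ iff $A^{k}=0$ iff $A$ is nilpotent.

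For item~\eqref{mDMPfurperitem2} I would first record the two identities $AA^{D}A^{\mathfrak{m}}=A^{D}AA^{\mathfrak{m}}=A^{D,\mathfrak{m}}$ (using $AA^{D}=A^{D}A$) and $(A^{D,\mathfrak{m}})^{2}=A^{D}A^{\mathfrak{m}}$. The second is the heart of the matter: writing $(A^{D,\mathfrak{m}})^{2}=A^{D}(AA^{\mathfrak{m}}A^{D}A)A^{\mathfrak{m}}$ and using that $AA^{\mathfrak{m}}$ fixes $\mathcal{R}(A^{D}A)=\mathcal{R}(A^{k})$ gives $(A^{D,\mathfrak{m}})^{2}=(A^{D})^{2}AA^{\mathfrak{m}}$; since $A^{D,\mathfrak{m}}-A^{\mathfrak{m}}=(A^{D}A-I)A^{\mathfrak{m}}$ has range inside $\mathcal{N}(A^{k})=\mathcal{N}(A^{D})$, this in turn equals $A^{D}A^{\mathfrak{m}}$. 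An induction on $l$ built on these two identities then yields $(A^{D,\mathfrak{m}})^{2m}=(A^{D}A^{\mathfrak{m}})^{m}$ and $(A^{D,\mathfrak{m}})^{2m+1}=A^{D,\mathfrak{m}}(A^{D}A^{\mathfrak{m}})^{m}=A(A^{D}A^{\mathfrak{m}})^{m+1}$, which is the claimed formula. Item~\eqref{mDMPfurperitem4} is then a short consequence: from~\eqref{mDMPfurperitem2} we have $A^{D,\mathfrak{m}}=A(A^{D,\mathfrak{m}})^{2}$, so ${\rm rank}(A^{D,\mathfrak{m}})\le{\rm rank}((A^{D,\mathfrak{m}})^{2})\le{\rm rank}(A^{D,\mathfrak{m}})$, forcing ${\rm Ind}(A^{D,\mathfrak{m}})\le 1$; hence $(A^{D,\mathfrak{m}})^{D}=(A^{D,\mathfrak{m}})^{\#}$, and because the group inverse is involutive, $((A^{D,\mathfrak{m}})^{D})^{D}=((A^{D,\mathfrak{m}})^{\#})^{\#}=A^{D,\mathfrak{m}}$.

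For item~\eqref{mDMPfurperitem3} I would set $B=A^{2}A^{\mathfrak{m}}$. Using $AA^{\mathfrak{m}}A=A$ one checks by induction that $B^{j}=A^{j+1}A^{\mathfrak{m}}$ for $j\ge 1$; since $\mathcal{R}(A^{j+1}A^{\mathfrak{m}})=A^{j}\mathcal{R}(A)=\mathcal{R}(A^{k})$ for $j\ge k-1$, this gives ${\rm Ind}(B)\le k$ and $B^{D}=B^{(2)}_{\mathcal{R}(B^{k}),\mathcal{N}(B^{k})}$ with $\mathcal{R}(B^{k})=\mathcal{R}(A^{k})$ and $\mathcal{N}(B^{k})=\mathcal{N}(A^{k+1}A^{\mathfrak{m}})\supseteq\mathcal{N}(A^{k}A^{\mathfrak{m}})$, the inclusion being an equality because ${\rm rank}(A^{k}A^{\mathfrak{m}})={\rm rank}(A^{k})$ by~\eqref{rankAKAmerankAkeq}; by Theorem~\ref{MDMPATSth}\eqref{minDMPperitem2} these are precisely $\mathcal{R}(A^{D,\mathfrak{m}})$ and $\mathcal{N}(A^{D,\mathfrak{m}})$. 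As moreover $A^{D,\mathfrak{m}}BA^{D,\mathfrak{m}}=A^{D,\mathfrak{m}}A(AA^{\mathfrak{m}}A^{D,\mathfrak{m}})=A^{D,\mathfrak{m}}AA^{D,\mathfrak{m}}=A^{D,\mathfrak{m}}$, the matrix $A^{D,\mathfrak{m}}$ is the unique outer inverse $B^{(2)}_{\mathcal{R}(B^{k}),\mathcal{N}(B^{k})}$, i.e. $B^{D}$. For item~\eqref{mDMPfurperitem7}: $A^{D,\mathfrak{m}}=A^{D}$ trivially gives $AA^{D,\mathfrak{m}}=A^{D,\mathfrak{m}}A$; conversely, if $AA^{D,\mathfrak{m}}=A^{D,\mathfrak{m}}A$, then by Theorem~\ref{MDMPATSth}\eqref{minDMPperitem5} both equal $A^{D}A=AA^{D}$, whence $A^{D,\mathfrak{m}}=A^{D,\mathfrak{m}}AA^{D,\mathfrak{m}}=A^{D}AA^{D,\mathfrak{m}}=A^{D}AA^{D}=A^{D}$; finally, by item~\eqref{mDMPfurperitem1}, $A^{D,\mathfrak{m}}=A^{D}$ holds iff $A^{D}(I-P_{\mathcal{R}(A),\mathcal{N}(A^{\sim})})=0$, i.e. iff $\mathcal{N}(A^{\sim})=\mathcal{R}(P_{\mathcal{N}(A^{\sim}),\mathcal{R}(A)})\subseteq\mathcal{N}(A^{D})=\mathcal{N}(A^{k})$. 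Item~\eqref{mDMPfurperitem6} follows by comparing, entrywise, the block representation~\eqref{minDMPHSdecGeq} of $A^{D,\mathfrak{m}}$ with~\eqref{HSAdec}: since $U$ is unitary, $A^{D,\mathfrak{m}}=A$ holds exactly when the two $(1,1)$-blocks and the two $(1,2)$-blocks agree, which reduces to the stated conditions relating $K,L,G_{1},G_{2}$.

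I do not anticipate a genuinely hard step; the care needed is concentrated in items~\eqref{mDMPfurperitem2}--\eqref{mDMPfurperitem4}, where every cancellation depends on invoking, at exactly the right place, one of the two ``absorption'' facts --- $AA^{\mathfrak{m}}$ acts as the identity on $\mathcal{R}(A^{k})$, and $A^{D}$ annihilates $\mathcal{N}(A^{k})$ --- with the identity $(A^{D,\mathfrak{m}})^{2}=A^{D}A^{\mathfrak{m}}$ needing both. In item~\eqref{mDMPfurperitem3} the one non-automatic point is establishing $B^{j}=A^{j+1}A^{\mathfrak{m}}$ (this is where $AA^{\mathfrak{m}}A=A$ enters), and then the dimension count via~\eqref{rankAKAmerankAkeq} that upgrades the obvious inclusion $\mathcal{N}(A^{k}A^{\mathfrak{m}})\subseteq\mathcal{N}(B^{k})$ to an equality.
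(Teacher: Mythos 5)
Your proposal is correct, and in several places it takes a genuinely different route from the paper. Items \eqref{mDMPfurperitem1}, \eqref{mDMPfurperitem7} and \eqref{mDMPfurperitem6} match the paper's arguments in substance, and your item \eqref{mDMPfurperitem2} reaches $(A^{D,\mathfrak{m}})^{2}=A^{D}A^{\mathfrak{m}}$ by projector/absorption reasoning where the paper simply cancels via $A^{\mathfrak{m}}A^{D}A=A^{\mathfrak{m}}AA^{D}$ and $AA^{\mathfrak{m}}A=A$ --- same content, slightly longer. The real divergence is in items \eqref{mDMPfurperitem3} and \eqref{mDMPfurperitem4}: the paper proves both by Cline's formula $(XY)^{D}=X\bigl((YX)^{D}\bigr)^{2}Y$ (twice for \eqref{mDMPfurperitem4}), whereas you avoid Cline's formula entirely, identifying $(A^{2}A^{\mathfrak{m}})^{D}$ as the unique outer inverse with range $\mathcal{R}(A^{k})$ and null space $\mathcal{N}(A^{k}A^{\mathfrak{m}})$ via the power formula $B^{j}=A^{j+1}A^{\mathfrak{m}}$ and the rank count \eqref{rankAKAmerankAkeq}, and deducing \eqref{mDMPfurperitem4} from ${\rm Ind}(A^{D,\mathfrak{m}})\le 1$ together with the involutivity of the group inverse. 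Both of your alternatives check out (and your route to \eqref{mDMPfurperitem4} is arguably cleaner than the paper's double application of Cline); the paper itself offers yet a third proof of \eqref{mDMPfurperitem3} in a subsequent remark, via the Hartwig--Spindelb\"{o}ck blocks. Your proof of \eqref{mDMPfurperitem5} by the rank identity ${\rm rank}(A^{D,\mathfrak{m}})={\rm rank}(A^{k})$ is also more elementary than the paper's detour through $A^{D,\dagger}$. One small remark on \eqref{mDMPfurperitem6}: your block comparison of \eqref{minDMPHSdecGeq} with \eqref{HSAdec} is the right method, and carrying it out gives $(\Sigma K)^{D}=\Sigma K$ from the $(1,1)$ blocks and then $L=KG_{1}^{-1}G_{2}$ from the $(1,2)$ blocks; the condition ``$(\Sigma K)^{D}=\Sigma L$'' as printed in the statement is dimensionally inconsistent unless $r=n-r$ and appears to be a typo for $(\Sigma K)^{D}=\Sigma K$, so your ``reduces to the stated conditions'' should be read with that correction.
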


\begin{proof}
\eqref{mDMPfurperitem1}. It is clear by  \eqref{MDMPeADAm} and  Lemma \ref{mMPprole}\eqref{AAmMPeq}. \par
\eqref{mDMPfurperitem2}.
From \eqref{MDMPeADAm} we have that
\begin{equation*}
(A^{D,\mathfrak{m}})^2=A^{D}AA^{\mathfrak{m}}A^{D}AA^{\mathfrak{m}} =A^{D}AA^{\mathfrak{m}}AA^{D}A^{\mathfrak{m}}=A^{D}A^{\mathfrak{m}}.
\end{equation*}
Then, for an  even number $l$, it follows that
\begin{equation}\label{evennumbereq}
(A^{D,\mathfrak{m}})^{l}=((A^{D,\mathfrak{m}})^{2})^{\frac{l}{2}} =(A^{D}A^{\mathfrak{m}})^{\frac{l}{2}}.
\end{equation}
Moreover, if $l$ is  odd, then from \eqref{evennumbereq} and \eqref{MDMPeADAm},   we get that
\begin{align*}
(A^{D,\mathfrak{m}})^{l}=A^{D,\mathfrak{m}}(A^{D,\mathfrak{m}})^{l-1} =A^{D,\mathfrak{m}}(A^{D}A^{\mathfrak{m}})^{\frac{l-1}{2}} =AA^DA^{\mathfrak{m}}(A^{D}A^{\mathfrak{m}})^{\frac{l-1}{2}} =A(A^{D}A^{\mathfrak{m}})^{\frac{l+1}{2}}.
\end{align*}
\par
\eqref{mDMPfurperitem3}.  Using the Cline's Formula, i.e., $(XY)^D=X\left((YX)^D\right)^2Y$  for $X\in\mathbb{C}^{m\times n}$ and $Y\in\mathbb{C}^{n\times m}$,  from \eqref{MDMPeADAm} we infer that
\begin{equation*}
(A^2A^{\mathfrak{m}})^{D}= (A(AA^{\mathfrak{m}}))^{D}= A\left((AA^{\mathfrak{m}}A)^D\right)^2AA^{\mathfrak{m}} =A(A^D)^2AA^{\mathfrak{m}}=A^DAA^{\mathfrak{m}}=A^{D,\mathfrak{m}}.
\end{equation*}
\par
\eqref{mDMPfurperitem4}.
Using again Cline's Formula, from  \eqref{MDMPeADAm},  Lemma \ref{mMPprole}\eqref{AAmMPeq} and  Lemma \ref{Dinverseperle}\eqref{Dinverseperitem1},  we have
\begin{align*}
(A^{D,\mathfrak{m}})^D&=(A^D(AA^{\mathfrak{m}}))^D = A^D\left((AA^{\mathfrak{m}}A^D)^D\right)^2AA^{\mathfrak{m}}\nonumber\\&= A^D\left((A^D)^D\right)^2AA^{\mathfrak{m}}=(A^D)^DAA^{\mathfrak{m}}=(A^D)^{\#}AA^{\mathfrak{m}}.
\end{align*}
Then, again by  Cline's Formula, we have
\begin{align*}
((A^{D,\mathfrak{m}})^D)^D&=((A^D)^{\#}AA^{\mathfrak{m}})^D= (A^D)^{\#}\left((AA^{\mathfrak{m}}(A^D)^{\#})^D\right)^2AA^{\mathfrak{m}}\\
&=(A^D)^{\#}\left(((A^D)^{\#})^{\#}\right)^2AA^{\mathfrak{m}}=A^DAA^{\mathfrak{m}}.
\end{align*}
\par
\eqref{mDMPfurperitem7}. According to \eqref{MDMPeADAm},  Lemma \ref{mMPprole}\eqref{AAmMPeq} and  Lemma \ref{Dinverseperle}\eqref{Dinverseperitem2}, we see that
\begin{align*}
AA^{D,\mathfrak{m}}=A^{D,\mathfrak{m}}A & \Leftrightarrow AA^{D}(AA^{\mathfrak{m}}-I_n)= 0 \Leftrightarrow \mathcal{N}(A^{\sim})\subseteq \mathcal{N}(A^{k})\\
& \Leftrightarrow A^{D}(AA^{\mathfrak{m}}-I_n)= 0  \Leftrightarrow A^{D,\mathfrak{m}}= A^{D}.
\end{align*}
\par
\eqref{mDMPfurperitem5}.  Using  \eqref{minDMPHSdecGeq01}  and \cite[Theorem 2.5]{DMPorignre}, i.e., $   A^{D,\dag}=U
   \left(
     \begin{array}{cc}
       (\Sigma K)^D & 0 \\
       0 & 0 \\
     \end{array}
   \right)
    U^*$,  we have
that
\begin{equation*}
A^{D,\mathfrak{m}}=0\Leftrightarrow(\Sigma K)^D=0\Leftrightarrow A^{D,\dag}=0,
\end{equation*}
which, together with \cite[Proposition 2.14 (g)]{DMPorignre}, i.e., $A^{D,\dag}=0\Leftrightarrow A $ is nilpotent,  shows the item \eqref{mDMPfurperitem5} holds.
\par
\eqref{mDMPfurperitem6}.    It is obvious by  \eqref{HSAdec} and \eqref{minDMPHSdecGeq}.
\end{proof}

\begin{remark}
Under the hypotheses of Theorem \ref{mDMPfurperth},  uisng \eqref{HSAdec}   and \eqref{minMPdec01}, by direct calculation we have
\begin{equation*}
 A^2A^{\mathfrak{m}}=U
\left(
  \begin{array}{cc}
    \Sigma K G_1^{-1} & 0 \\
    0 & 0 \\
  \end{array}
\right)
U^*G.
\end{equation*}
Then,  using \eqref{minDMPHSdecGeq01}  in Theorem \ref{MDMPHSdecth},  we can verify that $A^{D,\mathfrak{m}}$  satisfies the definition of the Drazin inverse of $ A^2A^{\mathfrak{m}}$, i.e., $A^{D,\mathfrak{m}}= (A^2A^{\mathfrak{m}})^{D}$. Thus,   we succeed in avoiding the use of Cline's formula for the proof of  Theorem \ref{mDMPfurperth}\eqref{mDMPfurperitem3}.
\end{remark}

\section{Further characterizations and representations of the $\mathfrak{m}$-DMP inverse }\label{charepminDMPsec}
We shall  continue to characterize and represent the $\mathfrak{m}$-DMP inverse from different views in this section. We begin this section by  characterizing the  $\mathfrak{m}$-DMP inverse based on  its  essential  properties obtained in Section \ref{deminDMPsec}.

\begin{theorem}
Let  $A\in\mathbb{C}^{n\times n}_{k}$ with ${\rm rank}(A^{\sim}AA^{\sim})={\rm rank}(A)$, and let $X\in\mathbb{C}^{n\times n}$. Then the following statements are equivalent:
\begin{enumerate}[$(1)$]
  \item\label{MDMPchaitem1} $X=A^{D,\mathfrak{m}}$;
    \item\label{MDMPchaitem4} $\mathcal{R}(X)\subseteq\mathcal{R}(A^k)$, $A^DX=A^DA^{\mathfrak{m}}$;
  \item\label{MDMPchaitem2} $\mathcal{R}(X)\subseteq\mathcal{R}(A^k)$, $A^kX=A^kA^{\mathfrak{m}}$;
  \item\label{MDMPchaitem10} $\mathcal{N}(A^kA^{\mathfrak{m}})\subseteq\mathcal{N}(X)$, $XA=AA^D$;
  \item\label{MDMPchaitem8} $\mathcal{N}(A^kA^{\mathfrak{m}})\subseteq\mathcal{N}(X)$, $XA^{k+1}=A^k$.
\end{enumerate}
\end{theorem}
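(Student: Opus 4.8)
The plan is to prove the five statements equivalent by running two cycles through~(1), namely $(1)\Rightarrow(2)\Rightarrow(3)\Rightarrow(1)$ and $(1)\Rightarrow(4)\Rightarrow(5)\Rightarrow(1)$. The only ingredients needed are: the explicit formula $A^{D,\mathfrak{m}}=A^{D}AA^{\mathfrak{m}}$ of Definition~\ref{minDMPdefinition}; the Drazin identities $A^{D}AA^{D}=A^{D}$, $AA^{D}=A^{D}A$ and $A^{k+1}A^{D}=A^{k}$ from Definition~\ref{Drazininvdef}, together with $\mathcal{R}(A^{D})=\mathcal{R}(A^{k})$ and $A^{D}A=P_{\mathcal{R}(A^{k}),\mathcal{N}(A^{k})}$ from Lemma~\ref{Dinverseperle}; and the range/null-space/rank information about $A^{D,\mathfrak{m}}$ recorded in Theorem~\ref{MDMPATSth}.

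For the first cycle: $(1)\Rightarrow(2)$ is immediate on substituting $X=A^{D}AA^{\mathfrak{m}}$, since $\mathcal{R}(X)=\mathcal{R}(A^{k})$ by Theorem~\ref{MDMPATSth}\eqref{minDMPperitem2} and $A^{D}X=(A^{D}A^{D}A)A^{\mathfrak{m}}=A^{D}A^{\mathfrak{m}}$. For $(2)\Rightarrow(3)$, left-multiply $A^{D}X=A^{D}A^{\mathfrak{m}}$ by $A^{k+1}$ and use $A^{k+1}A^{D}=A^{k}$ to obtain $A^{k}X=A^{k}A^{\mathfrak{m}}$; the range hypothesis is untouched. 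For $(3)\Rightarrow(1)$, left-multiply $A^{k}X=A^{k}A^{\mathfrak{m}}$ by $(A^{D})^{k}$: since $A^{D}$ commutes with $A$ and $A^{D}A$ is idempotent, $(A^{D})^{k}A^{k}=A^{D}A$, so $A^{D}AX=A^{D}AA^{\mathfrak{m}}$, and because $A^{D}A=P_{\mathcal{R}(A^{k}),\mathcal{N}(A^{k})}$ acts as the identity on $\mathcal{R}(X)\subseteq\mathcal{R}(A^{k})$, the left-hand side is exactly $X$, whence $X=A^{D}AA^{\mathfrak{m}}=A^{D,\mathfrak{m}}$.

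For the second cycle: $(1)\Rightarrow(4)$ is a direct read-off of Theorem~\ref{MDMPATSth}\eqref{minDMPperitem2} (giving $\mathcal{N}(A^{D,\mathfrak{m}})=\mathcal{N}(A^{k}A^{\mathfrak{m}})$) and Theorem~\ref{MDMPATSth}\eqref{minDMPperitem5} (giving $A^{D,\mathfrak{m}}A=A^{D}A=AA^{D}$). For $(4)\Rightarrow(5)$, right-multiply $XA=AA^{D}$ by $A^{k}$; commuting $A^{D}$ past powers of $A$ and using $A^{k+1}A^{D}=A^{k}$ yields $XA^{k+1}=A^{D}A^{k+1}=A^{k}$, and the null-space hypothesis is identical in~(4) and~(5). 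The crux is $(5)\Rightarrow(1)$: first, $A^{D,\mathfrak{m}}$ itself satisfies $A^{D,\mathfrak{m}}A^{k+1}=(A^{D,\mathfrak{m}}A)A^{k}=A^{D}A\cdot A^{k}=A^{k}$ by Theorem~\ref{MDMPATSth}\eqref{minDMPperitem5}, so $(X-A^{D,\mathfrak{m}})A^{k+1}=0$, i.e.\ $\mathcal{R}(A^{k})=\mathcal{R}(A^{k+1})\subseteq\mathcal{N}(X-A^{D,\mathfrak{m}})$; second, $\mathcal{N}(A^{k}A^{\mathfrak{m}})=\mathcal{N}(A^{D,\mathfrak{m}})\subseteq\mathcal{N}(X)$ forces $\mathcal{N}(A^{k}A^{\mathfrak{m}})\subseteq\mathcal{N}(X-A^{D,\mathfrak{m}})$; finally, since $AA^{D,\mathfrak{m}}=P_{\mathcal{R}(A^{k}),\mathcal{N}(A^{k}A^{\mathfrak{m}})}$ by Theorem~\ref{MDMPATSth}\eqref{minDMPperitem4} (and ${\rm rank}(A^{k}A^{\mathfrak{m}})={\rm rank}(A^{k})$ by~\eqref{rankAKAmerankAkeq}) one has the complementarity $\mathbb{C}^{n}=\mathcal{R}(A^{k})\oplus\mathcal{N}(A^{k}A^{\mathfrak{m}})$, so $\mathcal{N}(X-A^{D,\mathfrak{m}})=\mathbb{C}^{n}$ and $X=A^{D,\mathfrak{m}}$.

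The only step I expect to need real care is $(5)\Rightarrow(1)$: each of its two hypotheses is rather weak on its own, and they pin $X$ down only once one invokes the direct-sum decomposition $\mathbb{C}^{n}=\mathcal{R}(A^{k})\oplus\mathcal{N}(A^{k}A^{\mathfrak{m}})$ already implicit in Theorem~\ref{MDMPATSth}\eqref{minDMPperitem4}. The remaining implications are short manipulations of the Drazin identities together with $A^{D,\mathfrak{m}}=A^{D}AA^{\mathfrak{m}}$, and one should merely take care that the rank equality \eqref{rankAKAmerankAkeq} quoted for the decomposition is precisely the one already established in Section~\ref{deminDMPsec}, so that no new computation is incurred.
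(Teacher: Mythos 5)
Your proof is correct, but it is organized differently from the paper's. The paper establishes the single cycle $(1)\Rightarrow(2)\Rightarrow(3)\Rightarrow(4)\Rightarrow(5)\Rightarrow(1)$, whereas you run two short cycles through $(1)$. This spares you the implication $(3)\Rightarrow(4)$, which in the paper is the delicate link: there one must first upgrade $\mathcal{R}(X)\subseteq\mathcal{R}(A^k)$ and $A^kX=A^kA^{\mathfrak{m}}$ to ${\rm rank}(X)={\rm rank}(A^k)$ and $\mathcal{N}(X)=\mathcal{N}(A^kA^{\mathfrak{m}})$ via \eqref{rankAKAmerankAkeq} before extracting $XA=A^DA$. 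The other genuine divergence is in $(5)\Rightarrow(1)$: the paper deduces $\mathcal{R}(X)=\mathcal{R}(A^k)$ and $\mathcal{N}(X)=\mathcal{N}(A^kA^{\mathfrak{m}})$, factors $X=YA^kA^{\mathfrak{m}}$, verifies $XAX=X$ by a chain of multiplications, and then identifies $X$ with $A^{(2)}_{\mathcal{R}(A^{k}),\mathcal{N}(A^kA^{\mathfrak{m}})}$ through Theorem \ref{MDMPATSth}\eqref{minDMPperitem3}; you instead observe that $X-A^{D,\mathfrak{m}}$ annihilates both $\mathcal{R}(A^{k})$ (since both matrices satisfy $ZA^{k+1}=A^{k}$) and $\mathcal{N}(A^{k}A^{\mathfrak{m}})$, and conclude from the direct sum $\mathbb{C}^{n}=\mathcal{R}(A^{k})\oplus\mathcal{N}(A^{k}A^{\mathfrak{m}})$ already guaranteed by the projector in Theorem \ref{MDMPATSth}\eqref{minDMPperitem4}. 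Your version of this step is shorter and avoids the auxiliary matrix $Y$ and the outer-inverse verification; the paper's version has the mild advantage of re-deriving the outer-inverse characterization en route, but both arguments rest on exactly the same rank identity \eqref{rankAKAmerankAkeq} and the same projector facts, so nothing essential is gained or lost either way.
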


\begin{proof}
\eqref{MDMPchaitem1} $\Rightarrow$ \eqref{MDMPchaitem4}.  It is a direct corollary of Theorem \ref{MDMPATSth}\eqref{minDMPperitem2} and \eqref{MDMPeADAm}.
\par
\eqref{MDMPchaitem4} $\Rightarrow$ \eqref{MDMPchaitem2}. It is clear by premultiplying $A^DX=A^DA^{\mathfrak{m}}$ with $A^{k+1}$.
\par
\eqref{MDMPchaitem2} $\Rightarrow$ \eqref{MDMPchaitem10}.
Since ${\rm rank}(X)\leq {\rm rank}(A^k)$ by $\mathcal{R}(X)\subseteq\mathcal{R}(A^k)$, it follows from \eqref{rankAKAmerankAkeq} and $A^kX=A^kA^{\mathfrak{m}}$ that  ${\rm rank}(X)={\rm rank}(A^k)$ and $\mathcal{N}(X)= \mathcal{N}(A^kA^{\mathfrak{m}})$. Then, from Lemma \ref{Dinverseperle}\eqref{Dinverseperitem2} and $\mathcal{R}(X)\subseteq\mathcal{R}(A^k)$, we infer that
\begin{align*}
A^kX=A^kA^{\mathfrak{m}} &\Rightarrow (A^D)^kA^kXA=(A^D)^kA^kA^{\mathfrak{m}}A \\
 &\Rightarrow   A^DAXA=A^DAA^{\mathfrak{m}}A   \\
 &\Rightarrow   XA=A^DA .
\end{align*}
\par
\eqref{MDMPchaitem10} $\Rightarrow$\eqref{MDMPchaitem8}.  It is obvious by postmultiplying $XA=AA^D$ with $A^{k}$.
\par
\eqref{MDMPchaitem8} $\Rightarrow$ \eqref{MDMPchaitem1}.  Since $\mathcal{R} (X) \subseteq \mathcal{R}(A^k)$ and ${\rm rank}(A^k) \leq {\rm rank}(X)$ from $XA^{k+1}=A^k$, by \eqref{rankAKAmerankAkeq} and $\mathcal{N}(A^kA^{\mathfrak{m}})\subseteq\mathcal{N}(X)$ we have that
$\mathcal{R} (X) = \mathcal{R}(A^k)$ and $\mathcal{N}(A^kA^{\mathfrak{m}}) = \mathcal{N}(X)$, which implies that there exists $Y\in\mathbb{C}^{n\times n}$ such that
\begin{equation}\label{YAKAMEXeq}
YA^kA^{\mathfrak{m}}=X.
\end{equation}
 Then postmultiplying \eqref{YAKAMEXeq} with $A$ gives that $YA^k=XA$, implying $\mathcal{N}(A^k)\subseteq \mathcal{N}(XA)$. Consequently, by Lemma \ref{Dinverseperle}\eqref{Dinverseperitem2}, we get that
\begin{align*}
XA^{k+1}=A^k  & \Rightarrow XA^{k+1}(A^D)^kX=A^k (A^D)^kX \\
 & \Rightarrow XAA^DAX=A^DAX \\
 & \Rightarrow  XAX=X,
\end{align*}
implying $X=A^{(2)}_{\mathcal{R}(A^{k}),\mathcal{N}(A^kA^{\mathfrak{m}})}$. Therefore,  $X=A^{D,\mathfrak{m}}$ by Theorem \ref{MDMPATSth}\eqref{minDMPperitem3}.
\end{proof}

\begin{theorem}\label{minDMPeqchasecth}
Let  $A\in\mathbb{C}^{n\times n}_{k}$ with ${\rm rank}(A^{\sim}AA^{\sim})={\rm rank}(A)$, and let $X\in\mathbb{C}^{n\times n}$. Then the following statements are equivalent:
\begin{enumerate}[$(1)$]
  \item\label{minDMPeqchasecitem1} $X=A^{D,\mathfrak{m}}$;
  \item\label{minDMPeqchasecitem4}   $AX^2=X$, $AX=A^2A^DA^{\mathfrak{m}}$;
  \item\label{minDMPeqchasecitem3}  $AX^2=X$, $AX=P_{\mathcal{R}(A^{k}),\mathcal{N}(A^DA^{\mathfrak{m}})}$;
  \item\label{minDMPeqchasecitem2}  $AX^2=X$, $A^kX=A^kA^{\mathfrak{m}}$.
\end{enumerate}
\end{theorem}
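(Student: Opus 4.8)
The plan is to establish the equivalences in a cycle, $(1)\Rightarrow(2)\Rightarrow(3)\Rightarrow(4)\Rightarrow(1)$, leaning on the characterizations already proved, especially Theorem~\ref{MDMPATSth} and the rank identity \eqref{rankAKAmerankAkeq}. The implication $(1)\Rightarrow(2)$ should be routine: starting from $X=A^{D,\mathfrak{m}}=A^DAA^{\mathfrak{m}}$, the identity $A^{D,\mathfrak{m}}A A^{D,\mathfrak{m}}=A^{D,\mathfrak{m}}$ from \eqref{MDMPeq} together with $XA=A^DA$ gives $AX^2=AXAX \cdot(\text{something})$; more directly, $AX^2 = A(A^{D,\mathfrak{m}})^2 = AA^DA^{\mathfrak{m}}$ using $(A^{D,\mathfrak{m}})^2=A^DA^{\mathfrak{m}}$ from Theorem~\ref{mDMPfurperth}\eqref{mDMPfurperitem2}, and since $A^DA^{\mathfrak{m}}=A^DAA^DA^{\mathfrak{m}}=A^DA\cdot A^DA^{\mathfrak{m}}$ one checks $AA^DA^{\mathfrak{m}} = A^{D,\mathfrak{m}}$ directly (as $AA^D A^{\mathfrak{m}} = A^D A A^{\mathfrak{m}}$ because $AA^D=A^DA$), giving $AX^2=X$. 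For the second equation, $AX = AA^DAA^{\mathfrak{m}} = A^2A^DA^{\mathfrak{m}}$ since $AA^D=A^DA$ implies $AA^DA = A^2A^D$, hence $AA^DAA^{\mathfrak m}=A^2A^DA^{\mathfrak m}$.

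For $(2)\Rightarrow(3)$ I would only need to recognize that $A^2A^DA^{\mathfrak{m}}$ is a projector with the stated range and null space. Note $A^2A^DA^{\mathfrak{m}} = A\cdot A^{D,\mathfrak{m}}$ (as just computed), and $AA^{D,\mathfrak{m}} = P_{\mathcal{R}(A^k),\mathcal{N}(A^kA^{\mathfrak{m}})}$ by Theorem~\ref{MDMPATSth}\eqref{minDMPperitem4}; then one must argue $\mathcal{N}(A^kA^{\mathfrak{m}}) = \mathcal{N}(A^DA^{\mathfrak{m}})$, which follows because $\mathcal{N}(A^{D,\mathfrak{m}}) = \mathcal{N}(A^kA^{\mathfrak{m}})$ (Theorem~\ref{MDMPATSth}\eqref{minDMPperitem2}) and $\mathcal{N}(A^{D,\mathfrak{m}}) = \mathcal{N}(A^DAA^{\mathfrak{m}}) = \mathcal{N}(A^DA^{\mathfrak{m}})$ — the last step uses that $A^DA^{\mathfrak{m}}$ and $A^DAA^{\mathfrak{m}}$ have the same null space since $A^DA$ acts as the identity on $\mathcal{R}(A^{\mathfrak{m}})$ up to the Drazin projector, or more cleanly $A^DA^{\mathfrak m} = A^DA A^D A^{\mathfrak m}$ so $\mathcal N(A^DAA^{\mathfrak m})\subseteq\mathcal N(A^DA^{\mathfrak m})$ and conversely $A^DAA^{\mathfrak m}=A\cdot A^D\cdot A^D A^{\mathfrak m}$ wait—rather use $A^DAA^{\mathfrak m}=A^D(AA^{\mathfrak m})$ and $A^DA^{\mathfrak m}=A^D(AA^D)A^{\mathfrak m}=(A^D)^2(AA^{\mathfrak m})A$... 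I will simply observe both equal $X$ under hypothesis $(2)$ is circular, so instead I derive the null-space equality intrinsically. The implication $(3)\Rightarrow(4)$ is immediate: premultiply $AX = P_{\mathcal{R}(A^k),\mathcal{N}(A^DA^{\mathfrak{m}})}$ by $(A^D)^{k-1}A^{k-1}$... actually simply apply $A^{k-1}$ wait $A^kX = A^{k-1}\cdot AX = A^{k-1}P_{\mathcal{R}(A^k),\mathcal N(A^DA^{\mathfrak m})}$; since $\mathcal{R}(A^k)$ is $A$-invariant and the projector restricts to the identity there, $A^{k-1}P_{\mathcal R(A^k),\cdot} = A^{k-1}$ on the whole space only if $\mathcal N(\text{projector})\subseteq\mathcal N(A^{k-1})$, which need not hold; the clean route is $A^kX = A^k(AX^2) = A^kAX\cdot X = A^k P_{\cdots}X$, and since $\mathcal R(AX^2)=\mathcal R(X)$... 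Let me instead note $A^kX = A^{k}\cdot X$ and use $X=AX^2$ repeatedly: $X = A^kX^{k+1}$, so $A^kX = A^{2k}X^{k+1}$, not obviously helpful. The honest path for $(3)\Rightarrow(4)$: from $AX=P_{\mathcal R(A^k),\mathcal N(A^DA^{\mathfrak m})}$ we get $\mathcal R(AX)=\mathcal R(A^k)$ hence $\mathcal R(X)\subseteq\ldots$ no — multiply on the left by $A^{k-1}(A^{D,\mathfrak{m}})^{k-1}$? I expect the cleanest is: $AX = A\cdot A^{D,\mathfrak m}$ will be recovered, so just prove $(3)\Rightarrow(1)$ directly and deduce $(4)$.

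\textbf{The main obstacle} I anticipate is handling $(2)\Rightarrow(1)$ or $(3)\Rightarrow(1)$ cleanly: from $AX^2=X$ one gets $\mathcal{R}(X)\subseteq\mathcal{R}(A)$ and, iterating, $\mathcal{R}(X) = \mathcal{R}(AX) = \cdots \subseteq \mathcal{R}(A^k)$ since $X=A^jX^{j+1}$ for all $j\ge1$, which already gives one of the range conditions needed to invoke the earlier characterization theorem (the $(2)$-inverse characterization, Theorem~\ref{MDMPATSth}\eqref{minDMPperitem3}). Then the second equation in each of $(2),(3),(4)$ must be shown to force $A^kX = A^kA^{\mathfrak{m}}$ (which is exactly condition $(3)$ of the previous theorem's item \eqref{MDMPchaitem2}), after which that theorem closes the loop to $X = A^{D,\mathfrak{m}}$. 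For $(4)$ this is its own statement; for $(2)$, premultiply $AX=A^2A^DA^{\mathfrak m}$ by $(A^D)^{k-1}A^{k-2}$ appropriately, or better, by $A^{k-1}$: $A^kX = A^{k+1}A^DA^{\mathfrak m} = A^k(AA^D)A^{\mathfrak m} = A^kA^DAA^{\mathfrak m}=A^k A^{\mathfrak m}$ using $A^{k+1}A^D = A^k$; for $(3)$, $A^kX = A^{k-1}\cdot AX$ and I must evaluate $A^{k-1}P_{\mathcal R(A^k),\mathcal N(A^DA^{\mathfrak m})}$ — writing the projector as $AX$ and using $X=AX^2$ to get $A^kX = A^{k-1}AX = A^{k-1}AX(AX)X^{?}$... the robust fix is to first pass $(3)\Rightarrow(2)$ by noting $P_{\mathcal R(A^k),\mathcal N(A^DA^{\mathfrak m})} = AA^{D,\mathfrak m}$ once $\mathcal N(A^DA^{\mathfrak m})=\mathcal N(A^kA^{\mathfrak m})$ is established as above, and $AA^{D,\mathfrak m} = A A^DAA^{\mathfrak m}=A^2A^DA^{\mathfrak m}$. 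So the real chain is $(1)\Rightarrow(2)\Leftrightarrow(3)$, $(2)\Rightarrow(4)\Rightarrow(1)$, the pivotal lemma being the null-space identity $\mathcal{N}(A^DA^{\mathfrak{m}})=\mathcal{N}(A^kA^{\mathfrak{m}})$, which I will prove once and reuse; granting it, everything reduces to the already-established equivalences in the preceding theorem.
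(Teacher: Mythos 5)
Your final plan is correct and in substance identical to the paper's proof: both rest on the identity $A^2A^DA^{\mathfrak{m}}=AA^{D,\mathfrak{m}}=P_{\mathcal{R}(A^k),\mathcal{N}(A^kA^{\mathfrak{m}})}=P_{\mathcal{R}(A^k),\mathcal{N}(A^DA^{\mathfrak{m}})}$ to link (2) and (3), on iterating $AX^2=X$ to get $X=A^kX^{k+1}$ and hence $\mathcal{R}(X)\subseteq\mathcal{R}(A^k)$, and on multiplying $A^kX=A^kA^{\mathfrak{m}}$ by $(A^D)^k$ to close the loop. The null-space lemma $\mathcal{N}(A^DA^{\mathfrak{m}})=\mathcal{N}(A^kA^{\mathfrak{m}})$ that you isolate is indeed needed (the paper uses it tacitly when citing Theorem \ref{MDMPATSth}), and your sketch of it via $A^DA^{\mathfrak{m}}=A^D(A^DAA^{\mathfrak{m}})$ and $A^DAA^{\mathfrak{m}}=A(A^DA^{\mathfrak{m}})$ is sound; just strip out the false starts when writing it up.
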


\begin{proof}
 \eqref{minDMPeqchasecitem1}  $\Rightarrow$  \eqref{minDMPeqchasecitem4}. It is easily obtained by \eqref{MDMPeADAm}.
 \par
  \eqref{minDMPeqchasecitem4}  $\Rightarrow$  \eqref{minDMPeqchasecitem3}.
  According to \eqref{MDMPeADAm} and Theorem \ref{MDMPATSth}\eqref{minDMPperitem4}, we have that
  \begin{equation}\label{A2ADAmeqPeq}
   AX=A^2A^DA^{\mathfrak{m}}=AA^{D,\mathfrak{m}}= P_{\mathcal{R}(A^{k}),\mathcal{N}(A^DA^{\mathfrak{m}})}.
  \end{equation}
   \par
  \eqref{minDMPeqchasecitem3}  $\Rightarrow$  \eqref{minDMPeqchasecitem2}.
  Using \eqref{A2ADAmeqPeq}, from  $AX=P_{\mathcal{R}(A^{k}),\mathcal{N}(A^DA^{\mathfrak{m}})}$ we have that
  \begin{equation*}
    A^kX=A^{k-1}P_{\mathcal{R}(A^{k}),\mathcal{N}(A^DA^{\mathfrak{m}})}=A^{k-1}A^2A^DA^{\mathfrak{m}}=A^kA^{\mathfrak{m}}.
  \end{equation*}
 \par
  \eqref{minDMPeqchasecitem2}  $\Rightarrow$  \eqref{minDMPeqchasecitem1}. It follows from   $AX^2=X$ that
  \begin{equation*}
    X=AXX=AAX^2X=A^2XX^2=A^2AX^2X^2=A^3XX^3=...=A^kX^{k+1},
  \end{equation*}
which implies  $\mathcal{R}({X})\subseteq \mathcal{R}(A^k)$. Then,  by Lemma \ref{Dinverseperle}\eqref{Dinverseperitem2} and \eqref{MDMPeADAm}  we conclude that
\begin{align*}
  A^kX=A^kA^{\mathfrak{m}} & \Rightarrow    (A^D)^kA^kX=(A^D)^kA^kA^{\mathfrak{m}} \\  & \Rightarrow AA^DX=A^D AA^{\mathfrak{m}}\\  & \Rightarrow X=A^DAA^{\mathfrak{m}}= A^{D,\mathfrak{m}}.
\end{align*}
This completes the proof.
\end{proof}

Zuo et al.  in  \cite[Theorem 3.8]{diffDMPre} gave an interesting result of the DMP inverse, that is, for $A\in\mathbb{C}^{n\times n}_{k}$,
\begin{equation}\label{interDMPpreeq}
A^{D,\dag}=AA^{\dag}(I_n-\overline{A}AA^{\dag})^{D} =(I_n-\overline{A}AA^{\dag})^{D}AA^{\dag},
\end{equation}
where $\overline{A}=I_n-A$.
 The following theorem turns out analogous expressions of the $\mathfrak{m}$-DMP inverse.

\begin{theorem}
Let  $A\in\mathbb{C}^{n\times n}_{k}$ with ${\rm rank}(A^{\sim}AA^{\sim})={\rm rank}(A)$, and let $\overline{A}=I_n-A$. Then,
\begin{align}
A^{D,\mathfrak{m}}&=AA^{\mathfrak{m}}(I_n-\overline{A}AA^{\mathfrak{m}})^{D}\label{AAmIAAAmD01}\\
&=(I_n-\overline{A}AA^{\mathfrak{m}})^{D}AA^{\mathfrak{m}}\label{AAmIAAAmD02}.
\end{align}
\end{theorem}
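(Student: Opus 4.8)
The plan is to mimic the structure of the DMP-inverse identity \eqref{interDMPpreeq} from \cite{diffDMPre}, replacing $AA^{\dag}$ by $AA^{\mathfrak{m}}$ throughout, and to verify the two claimed formulas by reducing everything to the Drazin inverse of a single matrix. The key observation is that $P:=AA^{\mathfrak{m}}$ is an idempotent (indeed $P=P_{\mathcal{R}(A),\mathcal{N}(A^{\sim})}$ by Lemma \ref{mMPprole}\eqref{AAmMPeq}), so $I_n-\overline{A}P = I_n-(I_n-A)P = I_n-P+AP$. First I would set $N:=I_n-P+AP$ and compute, using $P^2=P$, that $NP = AP\cdot P + (I_n-P)P = AP$ and likewise $PN = P(I_n-P)+PAP = PAP$; more importantly I would show $AP$ and $N$ interact nicely: since $(I_n-P)$ is the complementary projector, $N$ is block–triangular with respect to the decomposition $\mathbb{C}^n=\mathcal{R}(P)\oplus\mathcal{N}(P)$, with the block on $\mathcal{R}(P)$ equal to $AP|_{\mathcal{R}(P)}$ and the block on $\mathcal{N}(P)$ equal to the identity. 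Consequently $N^D$ is also block–triangular, acting as the identity on the $\mathcal{N}(P)$ part and as $(AP)^D$–type map on the $\mathcal{R}(P)$ part, and $N^D P = (AP)^D$ restricted appropriately; this is the structural heart of the argument.

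Next I would make this precise via Cline's formula rather than an explicit block computation, which is cleaner. Write $\overline{A}P = (I_n-A)P$. The Drazin inverse $N^D$ of $N=I_n-\overline{A}P$ can be related to $(AP)^D$ by noting $AP = P - \overline{A}P$, and using the general fact that for an idempotent $P$ and any $M$ with $\mathcal{R}(M)\subseteq\mathcal{R}(P)$ one has $(P-M+?)$… — more directly, I would verify that $PN=NP$ would be too strong, so instead I would proceed by showing that $AA^{\mathfrak{m}}N^D$ satisfies the three defining equations \eqref{MDMPeq} of $A^{D,\mathfrak{m}}=A^DAA^{\mathfrak{m}}$, i.e. $XAX=X$, $XA=A^DA$, $A^kX=A^kA^{\mathfrak{m}}$, and invoke the uniqueness from Theorem \ref{uniqueDMPeqth}. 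For the third equation I would use $A^kP = A^kAA^{\mathfrak{m}}$ and the fact that $A^kN = A^k(I_n-P+AP)$; since $\mathcal{R}(A^k)=\mathcal{R}(A^D)$ and on that subspace $P$ acts... — here I would instead exploit that $A^k(I_n-P)=A^k-A^kAA^{\mathfrak{m}}$ and combine with $\mathcal{R}(A^k)\subseteq\mathcal{R}(A)=\mathcal{R}(P)$ when restricted suitably, so that $A^k(I_n-P+AP)=A^{k+1}A^{\mathfrak{m}}+\big(A^k-A^kAA^{\mathfrak{m}}\big)$, and iterate. The symmetric formula \eqref{AAmIAAAmD02} then follows from \eqref{AAmIAAAmD01} because $AA^{\mathfrak{m}}$ and $N=I_n-\overline{A}AA^{\mathfrak{m}}$ commute with $N^D$ in the right order — or, failing a slick argument, by repeating the verification with $N^DAA^{\mathfrak{m}}$ in place of $AA^{\mathfrak{m}}N^D$.

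The cleanest route, which I would actually write up, is the Hartwig–Spindelböck route: by Theorem \ref{MDMPHSdecth} we already have $A^{D,\mathfrak{m}}=U\!\begin{pmatrix}(\Sigma K)^D G_1^{-1}&0\\0&0\end{pmatrix}\!U^*G$, and by Lemma \ref{mMPprole} together with \eqref{minMPdec01} we can compute $AA^{\mathfrak{m}}$ and hence $\overline{A}AA^{\mathfrak{m}}=AA^{\mathfrak{m}}-A\cdot AA^{\mathfrak{m}}$ in the $U,G$ coordinates as a $2\times2$ block matrix; then $I_n-\overline{A}AA^{\mathfrak{m}}$ is block upper (or lower) triangular with an invertible block coming from $I_r$ on the complement and a $\Sigma K G_1^{-1}$-type block on the range, its Drazin inverse is computed by the standard triangular Drazin formula, and multiplying by $AA^{\mathfrak{m}}$ on the appropriate side collapses everything to $(\Sigma K)^D G_1^{-1}$ in the $(1,1)$ block. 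The main obstacle I anticipate is bookkeeping the Minkowski metric factor $G$ and the blocks $G_1,G_2,G_4$ consistently — in particular checking that the off-diagonal $G_2$ terms cancel, exactly as they did in the proof of \eqref{minDMPHSdecGeq01} — and confirming that the relevant triangular block is genuinely idempotent-complemented so that the triangular Drazin-inverse formula applies verbatim. Once that is in place, both \eqref{AAmIAAAmD01} and \eqref{AAmIAAAmD02} drop out by comparing with \eqref{minDMPHSdecGeq01}.
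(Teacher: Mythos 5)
Your committed route (the Hartwig--Spindelb\"{o}ck computation) is genuinely different from the paper's proof and does go through, but be aware that the paper gets there in three lines with an algebraic trick you did not find: it writes $I_n-\overline{A}AA^{\mathfrak{m}}=(I_n-AA^{\mathfrak{m}})+A^2A^{\mathfrak{m}}$, observes that the two summands annihilate each other on both sides, applies Drazin's additive formula $(X+Y)^D=X^D+Y^D$ for $XY=YX=0$ together with $(I_n-AA^{\mathfrak{m}})^D=I_n-AA^{\mathfrak{m}}$, and then invokes the already-proved identity $A^{D,\mathfrak{m}}=(A^2A^{\mathfrak{m}})^D$ from Theorem \ref{mDMPfurperth}\eqref{mDMPfurperitem3}; multiplying by $AA^{\mathfrak{m}}$ on either side and using $\mathcal{R}(A^{D,\mathfrak{m}})\subseteq\mathcal{R}(A)$ and $\mathcal{N}(A^{\sim})\subseteq\mathcal{N}(A^{D,\mathfrak{m}})$ finishes both \eqref{AAmIAAAmD01} and \eqref{AAmIAAAmD02}. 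Your coordinate approach buys nothing extra here and costs the two-block Drazin-inverse formula; the paper's approach reuses machinery already established and is manifestly basis-free.

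Two cautions about your write-up as it stands. First, your first two routes trail off mid-argument (the verification of $A^kX=A^kA^{\mathfrak{m}}$ for $X=AA^{\mathfrak{m}}N^D$ is abandoned), so only the third route counts, and even there you defer exactly the step where an error could hide: the off-diagonal block. Concretely, one finds $AA^{\mathfrak{m}}=U\left(\begin{smallmatrix}I_r&G_1^{-1}G_2\\0&0\end{smallmatrix}\right)U^*$ and $I_n-\overline{A}AA^{\mathfrak{m}}=U\left(\begin{smallmatrix}\Sigma K&(\Sigma K-I_r)G_1^{-1}G_2\\0&I_{n-r}\end{smallmatrix}\right)U^*$, and the $(1,2)$ entry of $AA^{\mathfrak{m}}(I_n-\overline{A}AA^{\mathfrak{m}})^D$ only collapses to $(\Sigma K)^DG_1^{-1}G_2$ after a telescoping computation using $(I_r-BB^D)B^t=0$ with $B=\Sigma K$ and $t={\rm Ind}(B)$; this is a genuine verification, not bookkeeping, and must appear in the proof. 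Second, your remark that ``$AA^{\mathfrak{m}}$ and $N$ commute with $N^D$ in the right order'' is not a proof of \eqref{AAmIAAAmD02}; in the block picture the left-multiplied version is actually the \emph{easier} one (it is immediate from the triangular form), so prove it directly rather than trying to deduce it from \eqref{AAmIAAAmD01}.
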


\begin{proof}
Using \cite[Corollary 1]{Drainzeinre}, i.e., $(X+Y)^{D}=X^D+Y^D$, where $X,Y\in\mathbb{C}^{n\times n}$ satisfies $XY=YX=0$, and a clear fact \begin{equation*}
  (I_n-AA^{\mathfrak{m}})A^2A^{\mathfrak{m}}= A^2A^{\mathfrak{m}}(I_n-AA^{\mathfrak{m}})=0,
\end{equation*}
we can directly have that
\begin{equation*}
(I_n-AA^{\mathfrak{m}}+A^2A^{\mathfrak{m}})^{D} =(I_n-AA^{\mathfrak{m}})^{D}+(A^2A^{\mathfrak{m}})^{D}
=I_n-AA^{\mathfrak{m}}+(A^2A^{\mathfrak{m}})^{D}.
\end{equation*}
Hence, it follows from Theorem \ref{mDMPfurperth}\eqref{mDMPfurperitem3}, Theorem \ref{MDMPATSth}\eqref{minDMPperitem2}  and Lemma \ref{mMPprole}\eqref{AAmMPeq}  that
\begin{align*}
AA^{\mathfrak{m}}(I_n-\overline{A}AA^{\mathfrak{m}})^{D}&= AA^{\mathfrak{m}}(I_n-AA^{\mathfrak{m}}+A^2A^{\mathfrak{m}})^{D}\\
&=AA^{\mathfrak{m}}(I_n-AA^{\mathfrak{m}})+AA^{\mathfrak{m}}(A^2A^{\mathfrak{m}})^{D}\\
&=AA^{\mathfrak{m}}A^{D,\mathfrak{m}}= A^{D,\mathfrak{m}},
\end{align*}
and
\begin{align*}
(I_n-\overline{A}AA^{\mathfrak{m}})^{D}AA^{\mathfrak{m}}&= (I_n-AA^{\mathfrak{m}}+A^2A^{\mathfrak{m}})^{D}AA^{\mathfrak{m}}\\
&=(I_n-AA^{\mathfrak{m}})AA^{\mathfrak{m}}+(A^2A^{\mathfrak{m}})^{D}AA^{\mathfrak{m}}\\
&=A^{D,\mathfrak{m}}AA^{\mathfrak{m}}= A^{D,\mathfrak{m}},
\end{align*}
which show that \eqref{AAmIAAAmD01}  and \eqref{AAmIAAAmD02} are true.
\end{proof}

The full-rank factorization is also confirmed     as  a  powerful tool to study the generalized inverses.  And,   using the full-rank factorization, Cline  \cite{Drazinifurafre} and Zekraoui et al.  \cite{algebraicMMP} expressed the Drazin inverse and the Minkowski inverse, respectively. Based on their work, we present  a new representation of the $\mathfrak{m}$-DMP inverse in the following theorem.

\begin{theorem}\label{minDMPfulldth}
Let  $A\in\mathbb{C}^{n\times n}_k$ with ${\rm rank}(A^{\sim}AA^{\sim})={\rm rank}(A)>0$ and $A^k\neq 0$.
 Let
\begin{equation}\label{AfzBkCk}
\left.
  \begin{array}{ccccc}
   A=B_1C_1, & C_1B_1=B_2C_2, &C_2B_2=B_3C_3, & ..., &C_{k-1}B_{k-1}=B_kC_k, \\
  \end{array}
\right.
\end{equation}
be such that $B_1C_1$ is a full-rank factorization  of $A$, and $B_{i+1}C_{i+1}$ are  full-rank factorizations  of $C_{i}B_{i}$ $(i=1,2,...,k-1)$. Then
\begin{equation*}
  A^{D,\mathfrak{m}}= B_1 \cdots B_k(C_kB_k)^{-k}C_k \cdots C_2(B_1^{\sim}B_1)^{-1}B_1^{\sim}.
\end{equation*}
\end{theorem}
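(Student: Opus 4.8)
The plan is to reduce the claimed formula to the two ingredients we already have: Cline's iterative full-rank-factorization expression for the Drazin inverse $A^D$, and the full-rank-factorization formula for the Minkowski inverse $A^{\mathfrak m}$ from Lemma \ref{HSmMpdecth}\eqref{minMPexsitconditem4}, combined via the definition $A^{D,\mathfrak m}=A^DAA^{\mathfrak m}$ in \eqref{MDMPeADAm}. First I would recall Cline's result: from the chain \eqref{AfzBkCk} one has
\begin{equation*}
A^D=B_1B_2\cdots B_k(C_kB_k)^{-(k+1)}C_kC_{k-1}\cdots C_1,
\end{equation*}
where $C_kB_k$ is invertible because $\mathrm{rank}(C_kB_k)=\mathrm{rank}(A^k)=\mathrm{rank}(A)>0$ (each full-rank factorization in the chain preserves rank down to the stabilization of the index, and $A^k\neq 0$ guarantees $C_kB_k\neq 0$ is actually the square invertible block). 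Next, since $A=B_1C_1$ is itself a full-rank factorization of $A$, Lemma \ref{HSmMpdecth}\eqref{minMPexsitconditem4} applies verbatim (the hypothesis $\mathrm{rank}(A^{\sim}AA^{\sim})=\mathrm{rank}(A)$ is exactly condition \eqref{minMPexsitconditem3}, equivalent to \eqref{minMPexsitconditem2}, so $B_1^{\sim}B_1$ and $C_1C_1^{\sim}$ are nonsingular) and gives
\begin{equation*}
A^{\mathfrak m}=C_1^{\sim}(C_1C_1^{\sim})^{-1}(B_1^{\sim}B_1)^{-1}B_1^{\sim}.
\end{equation*}

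Then I would simply multiply the three pieces together. Writing $A^{D,\mathfrak m}=A^D\cdot A\cdot A^{\mathfrak m}$ and substituting,
\begin{equation*}
A^{D,\mathfrak m}=\bigl[B_1\cdots B_k(C_kB_k)^{-(k+1)}C_k\cdots C_1\bigr]\,\bigl[B_1C_1\bigr]\,\bigl[C_1^{\sim}(C_1C_1^{\sim})^{-1}(B_1^{\sim}B_1)^{-1}B_1^{\sim}\bigr].
\end{equation*}
The middle and right factors collapse: $B_1C_1\cdot C_1^{\sim}(C_1C_1^{\sim})^{-1}(B_1^{\sim}B_1)^{-1}B_1^{\sim}=B_1C_1 A^{\mathfrak m}$, and using $C_1C_1^{\sim}(C_1C_1^{\sim})^{-1}=I$ this is $B_1(B_1^{\sim}B_1)^{-1}B_1^{\sim}=B_1B_1^{\dagger}$ — wait, more cleanly, I would instead absorb the leftmost $C_1$ of the $A^D$ factor with the $B_1$ of the middle $A$: the tail $\cdots C_2C_1\cdot B_1C_1$ contains $C_1B_1$, which by \eqref{AfzBkCk} equals $B_2C_2$. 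So $C_1\cdot B_1=B_2C_2$, and this telescoping will convert $C_k\cdots C_2C_1B_1 C_1 C_1^\sim(\cdots)$ step by step. The cleanest bookkeeping is: $C_kB_k$ is invertible, so multiply and regroup to peel one factor of $(C_kB_k)^{-1}$ against the newly created $C_kB_k$ that telescoping produces, dropping the exponent from $-(k+1)$ to $-k$; and the trailing $C_1C_1^{\sim}(C_1C_1^{\sim})^{-1}$ cancels to leave $(B_1^{\sim}B_1)^{-1}B_1^{\sim}$ at the very end. The result is exactly
\begin{equation*}
A^{D,\mathfrak m}=B_1\cdots B_k(C_kB_k)^{-k}C_k\cdots C_2(B_1^{\sim}B_1)^{-1}B_1^{\sim},
\end{equation*}
as claimed.

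The routine but slightly delicate part will be the telescoping bookkeeping: tracking how the product $C_k C_{k-1}\cdots C_1 B_1 C_1 C_1^{\sim}(C_1 C_1^{\sim})^{-1}$ simplifies. The key identities are $C_1 B_1 = B_2 C_2$, $C_2 B_2 = B_3 C_3$, \dots (so each $C_i B_i$ can be replaced), together with the trivial cancellation $C_1 C_1^{\sim}(C_1 C_1^{\sim})^{-1}=I_r$ and $(C_k B_k)^{-(k+1)}(C_k B_k)=(C_k B_k)^{-k}$. I would carry this out carefully by writing $A=B_1C_1$ on the right of $A^D$, performing the cancellation $C_1 C_1^{\sim}(C_1C_1^{\sim})^{-1}=I$ first to reduce $AA^{\mathfrak m}$ to $B_1(B_1^{\sim}B_1)^{-1}B_1^{\sim}$, then noting $A^D B_1(B_1^{\sim}B_1)^{-1}B_1^{\sim} = A^D A (B_1^{\sim}B_1)^{-1}B_1^{\sim}/\!/$... — actually the slickest route avoids this entirely: since $A^D A = A A^D$ and $\mathcal R(A^D)=\mathcal R(A^k)$, one has $A^D A\cdot B_1(B_1^{\sim}B_1)^{-1}B_1^{\sim}$, and because $A^D A$ acts as the identity on $\mathcal R(A^k)\supseteq\mathcal R(A^D)$... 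I would ultimately just substitute Cline's formula for $A^D$ directly, regroup the trailing $C_1\,B_1\,C_1\,C_1^{\sim}(C_1C_1^{\sim})^{-1}(B_1^{\sim}B_1)^{-1}B_1^{\sim}$ as $C_1 B_1 (B_1^{\sim}B_1)^{-1}B_1^{\sim}=B_2 C_2(B_1^{\sim}B_1)^{-1}B_1^{\sim}$, and absorb the extra $C_kB_k$ power. The only genuine obstacle is making sure the exponent on $(C_kB_k)$ lands at exactly $-k$ and the $C$-tail terminates at $C_2$ rather than $C_1$; both follow from one application of the telescoping identity $C_1B_1=B_2C_2$.
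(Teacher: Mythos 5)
Your proposal follows essentially the same route as the paper's proof: substitute Cline's full-rank-factorization formula for $A^D$ and the formula $A^{\mathfrak m}=C_1^{\sim}(C_1C_1^{\sim})^{-1}(B_1^{\sim}B_1)^{-1}B_1^{\sim}$ into $A^{D,\mathfrak m}=A^DAA^{\mathfrak m}$, cancel $C_1C_1^{\sim}(C_1C_1^{\sim})^{-1}=I$, telescope $C_k\cdots C_2C_1B_1=C_kB_kC_k\cdots C_2$ via the chain \eqref{AfzBkCk}, and absorb one factor of $C_kB_k$ to drop the exponent from $-(k+1)$ to $-k$. One small slip in a parenthetical: the invertibility of $C_kB_k$ comes from ${\rm rank}(C_kB_k)={\rm rank}(A^{k+1})={\rm rank}(A^{k})$ (since ${\rm Ind}(A)=k$), not from ${\rm rank}(A^{k})={\rm rank}(A)$, which is false for $k>1$; this is in any case part of the cited result of Cline and does not affect your argument.
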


\begin{proof}
First,  it follows from \eqref{AfzBkCk}  that
\begin{equation*}
    C_k\cdots C_3C_2C_1B_1 = C_k\cdots C_3C_2B_2C_2 = \cdots = C_kB_kC_k\cdots C_2.
\end{equation*}
Then,  applying \cite[Formula (1.11)]{Drazinifurafre}, i.e., $A^D= B_1 \cdots B_k(C_kB_k)^{-(k+1)}C_k \cdots C_1$, and \cite[Theorem 8]{algebraicMMP}, i.e., $A^{\mathfrak{m}}=C_1^{\sim}(C_1C_1^{\sim})^{-1}(B_1^{\sim}B_1)^{-1}B_1^{\sim}$, to \eqref{MDMPeADAm},  we have that
\begin{align*}
 A^{D,\mathfrak{m}} &=
 B_1 \cdots B_k(C_kB_k)^{-(k+1)}C_k \cdots C_1B_1C_1C_1^{\sim}(C_1C_1^{\sim})^{-1}(B_1^{\sim}B_1)^{-1}B_1^{\sim} \\
   & = B_1 \cdots B_k(C_kB_k)^{-k}C_k \cdots C_2(B_1^{\sim}B_1)^{-1}B_1^{\sim},
\end{align*}
which completes the proof.
\end{proof}

In terms of  the full-rank decomposition,  Zhou and Chen \cite{DMPfutre10} derived  integral representations  of the DMP inverse,  which do not require the  restriction for the spectrum of a   matrix. And,  K{\i}l{\i}\c{c}man  et al. \cite{WminMPorire} obtained an integral representation  of the weighted Minkowski inverse.  Motivated by their work, we show an  integral representation of the $\mathfrak{m}$-DMP inverse as follows.

\begin{theorem}
Let  $A\in\mathbb{C}^{n\times n}_k$ with ${\rm rank}(A^{\sim}AA^{\sim})={\rm rank}(A)>0$ and $A^k\neq 0$, and let  the full-rank factorization of $A$ be as in \eqref{AfzBkCk}. Then,
\begin{equation}\label{minDMPinterpreth}
 A^{D,\mathfrak{m}} =  \int_{\rm{0}}^\infty  M{\rm exp}(-B_1^{\sim}B_1t)B_1^{\sim}dt,
\end{equation}
where $M=B_1 \cdots B_k(C_kB_k)^{-k}C_k \cdots C_2$.
\end{theorem}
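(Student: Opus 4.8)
The plan is to reduce the integral representation \eqref{minDMPinterpreth} to the full-rank factorization formula established in Theorem \ref{minDMPfulldth}, namely $A^{D,\mathfrak{m}} = M(B_1^{\sim}B_1)^{-1}B_1^{\sim}$ with $M = B_1 \cdots B_k(C_kB_k)^{-k}C_k \cdots C_2$. The only nontrivial analytic ingredient is the classical identity
\begin{equation*}
\int_{0}^{\infty} {\rm exp}(-Pt)\, dt = P^{-1},
\end{equation*}
valid for any matrix $P$ all of whose eigenvalues have positive real part. So the first step is to verify that $P := B_1^{\sim}B_1$ is such a matrix. Since ${\rm rank}(A^{\sim}AA^{\sim}) = {\rm rank}(A)$, Lemma \ref{HSmMpdecth}\eqref{minMPexsitconditem4} tells us $B_1^{\sim}B_1$ is nonsingular; but positive-definiteness (or at least that its spectrum lies in the open right half-plane) does not come for free in Minkowski space because $B_1^{\sim} = GB_1^*F$ involves the indefinite metric rather than the Euclidean adjoint. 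This is the step I expect to be the main obstacle: one must argue, presumably again via the Hartwig–Spindelb\"ock form \eqref{AHSdecrefullrankeq}–\eqref{minMPdec01} or via the hypotheses of Lemma \ref{HSmMpdecth}, that the eigenvalues of $B_1^{\sim}B_1$ all have positive real part, so that the matrix exponential integral converges and equals $(B_1^{\sim}B_1)^{-1}$.

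Granting that, the computation is short. I would write
\begin{equation*}
\int_{0}^{\infty} M\,{\rm exp}(-B_1^{\sim}B_1 t)\,B_1^{\sim}\,dt = M\left(\int_{0}^{\infty}{\rm exp}(-B_1^{\sim}B_1 t)\,dt\right)B_1^{\sim} = M(B_1^{\sim}B_1)^{-1}B_1^{\sim},
\end{equation*}
pulling the constant matrices $M$ and $B_1^{\sim}$ outside the integral (justified by uniform convergence on $[0,\infty)$, which follows from the exponential decay of $\|{\rm exp}(-B_1^{\sim}B_1 t)\|$ once the spectral condition is in hand). By Theorem \ref{minDMPfulldth} the right-hand side is exactly $A^{D,\mathfrak{m}}$, which finishes the proof.

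An alternative route, should the spectral argument be awkward to state directly for $B_1^{\sim}B_1$, is to invoke an integral representation of the Minkowski inverse already in the literature (the paper cites K\i l\i\c cman et al. \cite{WminMPorire} for exactly such a formula for the weighted Minkowski inverse): one would express $A^{\mathfrak{m}} = C_1^{\sim}(C_1C_1^{\sim})^{-1}(B_1^{\sim}B_1)^{-1}B_1^{\sim}$ and substitute its integral form into \eqref{MDMPeADAm}, then simplify the product $A^{D}A C_1^{\sim}(C_1C_1^{\sim})^{-1}$ using the chain of full-rank factorizations \eqref{AfzBkCk} exactly as in the proof of Theorem \ref{minDMPfulldth}, collapsing $C_k\cdots C_2 C_1 B_1 C_1 C_1^{\sim}(C_1C_1^{\sim})^{-1} = C_k\cdots C_2$. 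Either way, the heart of the matter is the convergence/evaluation of the matrix-exponential integral, and everything else is bookkeeping already done in Theorem \ref{minDMPfulldth}.
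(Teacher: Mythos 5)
Your reduction to Theorem \ref{minDMPfulldth} is the right skeleton, and you have correctly located the crux: the convergence and evaluation of $\int_0^\infty \exp(-B_1^{\sim}B_1 t)\,dt$. But your primary route leaves exactly that crux open. Nonsingularity of $B_1^{\sim}B_1$ (which does follow from the hypothesis, as the paper checks via ${\rm rank}(B_1^{\sim}B_1)={\rm rank}(B_1)$) is not enough: $B_1^{\sim}B_1$ is built from the indefinite metric on both sides, is in general non-Hermitian, and nothing in the Hartwig--Spindelb\"{o}ck data immediately places its spectrum in the open right half-plane. Moreover, since $B_1^{\sim}$ has full row rank $r$ and $B_1^{\sim}B_1$ is an invertible $r\times r$ matrix, the trailing factor $B_1^{\sim}$ does not restrict the integrand to a smaller invariant subspace, so convergence genuinely requires the full spectral condition, which you neither prove nor reduce to a citation. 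As written, the main argument is incomplete at the step you yourself flagged.

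The paper takes essentially your ``alternative route,'' but applied to the factor $B_1$ rather than to $A$. Concretely: it first verifies that $B_1^{\mathfrak{m}}$ exists, by deducing ${\rm rank}(B_1^{\sim}B_1)={\rm rank}(B_1)$ from ${\rm rank}(A^{\sim}AA^{\sim})={\rm rank}(A)$ and then ${\rm rank}(B_1^{\sim}B_1B_1^{\sim})={\rm rank}(B_1)$ because $B_1^{\sim}$ has full row rank, so that Lemma \ref{HSmMpdecth}\eqref{minMPexsitconditem3} applies. Since $B_1$ has full column rank, $B_1^{\mathfrak{m}}=(B_1^{\sim}B_1)^{-1}B_1^{\sim}$, whence Theorem \ref{minDMPfulldth} reads $A^{D,\mathfrak{m}}=MB_1^{\mathfrak{m}}$; the cited integral representation $X^{\mathfrak{m}}=\int_0^\infty\exp(-X^{\sim}Xt)X^{\sim}\,dt$ of \cite{WminMPorire} is then invoked for $X=B_1$, and pulling the constant $M$ outside the integral gives \eqref{minDMPinterpreth}. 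The spectral/convergence question is thereby delegated entirely to the cited result. Your sketch of this route instead substitutes the integral form of $A^{\mathfrak{m}}$ into \eqref{MDMPeADAm}, which would produce $\exp(-A^{\sim}At)$ rather than $\exp(-B_1^{\sim}B_1t)$ and so does not land on the stated formula; it also omits the existence check for $B_1^{\mathfrak{m}}$ that legitimizes the citation. To close the gap, carry out the alternative route for $B_1$: prove $B_1^{\mathfrak{m}}$ exists, identify $(B_1^{\sim}B_1)^{-1}B_1^{\sim}$ with $B_1^{\mathfrak{m}}$, and quote the integral representation for $B_1^{\mathfrak{m}}$.
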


\begin{proof}
We first claim that $B^{\mathfrak{m}}$ exists. In fact,
\begin{align*}
  {\rm rank}(B_1^{\sim}B_1) \leq {\rm rank}(B_1) & = {\rm rank}(A)  = {\rm rank}(A^{\sim}AA^{\sim})\\
 & = {\rm rank}(C_1^{\sim}B_1^{\sim}B_1C_1C_1^{\sim}B_1^{\sim})\leq {\rm rank}(B_1^{\sim}B_1).
\end{align*}
Then, ${\rm rank}(B_1) ={\rm rank}(B_1^{\sim}B_1)= {\rm rank}(B_1^{\sim}B_1B_1^{\sim})$  since $B_1^{\sim}$ is of full row rank. Thus, $B^{\mathfrak{m}}$ exists by Lemma \ref{HSmMpdecth}\eqref{minMPexsitconditem3}. Furthermore, it follows from Lemma \ref{mMPprole}\eqref{mMPRN}  and Theorem \ref{MDMPATSth}\eqref{minDMPperitem2} that
\begin{equation*}
\mathcal{N}(B_1^{\sim})\subseteq \mathcal{N}(C_1^{\sim}B_1^{\sim}) = \mathcal{N}(A^{\sim}) = \mathcal{N}(A^{\mathfrak{m}})\subseteq\mathcal{N}({A^kA^{\mathfrak{m}}}) =\mathcal{N}(A^{D,\mathfrak{m}}),
\end{equation*}
which, together with Lemma \ref{mMPprole}\eqref{AAmMPeq}, shows that
\begin{equation}\label{mDMPB1mDMPeq}
A^{D,\mathfrak{m}}B_1B_1^{\mathfrak{m}}=A^{D,\mathfrak{m}}.
\end{equation}
Finally, applying Theorem \ref{minDMPfulldth} and \cite[Corollary 8]{WminMPorire}, i.e., $X^{\mathfrak{m}}=\int_{\rm{0}}^\infty {\rm exp}(-X^{\sim}Xt)X^{\sim}dt$ for $X\in\mathbb{C}^{n\times n}$, to \eqref{mDMPB1mDMPeq} gives \eqref{minDMPinterpreth} immediately.
\end{proof}

It is well known   that the Moore-Penrose inverse of  $A\in\mathbb{C}^{m\times n}$ can be expressed as a limit \cite{benlimitre}, i.e.,
\begin{equation*}
  A^{\dag}={\rm lim }_{\lambda  \to 0} (\lambda I_n+A^*A)^{-1}A^*.
\end{equation*}
And, it has always been a hot topic to compute the generalized inverses  by means of  the limiting process.  Ma et al. \cite{chaDMPmare} and K{\i}l{\i}\c{c}man et al. \cite{WminMPorire} presented a few limiting expressions of the DMP inverse and    weighted Minkowski inverse, respectively. The next theorem gives several  limit representations for the $\mathfrak{m}$-DMP inverse.

\begin{theorem}
Let  $A\in\mathbb{C}^{n\times n}_{k}$ with ${\rm rank}(A^{\sim}AA^{\sim})={\rm rank}(A)$.  Then,
\begin{align}
A^{D,\mathfrak{m}}&={\rm lim }_{\lambda  \to 0} A^{k}(\lambda I_n+A^{\mathfrak{m}}A^{k+1})^{-1}A^{\mathfrak{m}}\label{minDMPlimityuaneq01}\\
&={\rm lim }_{\lambda  \to 0}A^{k}A^{\mathfrak{m}}(\lambda I_n+A^{k+1}A^{\mathfrak{m}})^{-1}\label{minDMPlimitrepreseeq02}\\
&={\rm lim }_{\lambda  \to 0}(\lambda I_n+A^{k})^{-1}A^{k}A^{\mathfrak{m}}\label{minDMPlimitrepreseeq03}\\
&={\rm lim }_{\lambda  \to 0}(\lambda I_n+A^{k})^{-1}A^{k}(\lambda I_n+A^{\sim}A)^{-1}A^{\sim}\label{minDMPlimitrepreseeq04}.
\end{align}
\end{theorem}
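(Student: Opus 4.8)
The plan is to establish each of the four limiting formulas by reducing it to one of the abstract limit representations for outer inverses already recorded in Lemma \ref{limitlemma1} and Lemma \ref{ATSlimitlemma}, together with the identification $A^{D,\mathfrak{m}}=A^{(2)}_{\mathcal{R}(A^{k}),\mathcal{N}(A^{k}A^{\mathfrak{m}})}$ from Theorem \ref{MDMPATSth}\eqref{minDMPperitem3}. The key preliminary observation is that $A^{D,\mathfrak{m}}=A^{D}AA^{\mathfrak{m}}$ can be rewritten, using $A^{D}=(A^{D})^{k+1}A^{k}$ and Lemma \ref{Dinverseperle}, in the factored form $A^{D,\mathfrak{m}}=A^{k}\bigl[(A^{D})^{k+1}A^{\mathfrak{m}}\bigr]$ as well as $A^{D,\mathfrak{m}}=\bigl[A^{k}(A^{D})^{k+1}\bigr]A^{k}A^{\mathfrak{m}}=(A^{D}A)\,A^{\mathfrak{m}}$; more usefully, one checks that $\mathcal{R}(A^{k}A^{\mathfrak{m}})=\mathcal{R}(A^{k})$ (this is essentially \eqref{rankAKAmerankAkeq} together with $\mathcal{R}(A^{k}A^{\mathfrak{m}})\subseteq\mathcal{R}(A^{k})$) and $\mathcal{N}(A^{k}A^{\mathfrak{m}})$ is the prescribed null space. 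So the strategy for each line is to exhibit a product $XY$ (or a single matrix $H$) whose range is $\mathcal{R}(A^{k})$ and whose null space is $\mathcal{N}(A^{k}A^{\mathfrak{m}})$, and whose associated $A^{(2)}_{\mathcal{R}(XY),\mathcal{N}(XY)}$ equals $A^{D,\mathfrak{m}}$, and then quote the appropriate lemma.

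For \eqref{minDMPlimityuaneq01}, I would take $X=A^{k}$ and $Y=A^{\mathfrak{m}}$ in Lemma \ref{limitlemma1}, so the right-hand side is $\lim_{\lambda\to 0}X(\lambda I_n+YAX)^{-1}Y = A^{(2)}_{\mathcal{R}(A^{k}A^{\mathfrak{m}}),\mathcal{N}(A^{k}A^{\mathfrak{m}})}$; one must verify $\mathcal{R}(A^{k}A^{\mathfrak{m}})=\mathcal{R}(A^{k})$ — which follows from \eqref{rankAKAmerankAkeq} and the trivial inclusion — so that this outer inverse is exactly $A^{(2)}_{\mathcal{R}(A^{k}),\mathcal{N}(A^{k}A^{\mathfrak{m}})}=A^{D,\mathfrak{m}}$ by Theorem \ref{MDMPATSth}\eqref{minDMPperitem3}. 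For \eqref{minDMPlimitrepreseeq02} and \eqref{minDMPlimitrepreseeq03}, I would apply Lemma \ref{ATSlimitlemma} with $H=A^{k}A^{\mathfrak{m}}$: since $\mathcal{R}(H)=\mathcal{R}(A^{k})=\mathcal{T}$ and $\mathcal{N}(H)=\mathcal{N}(A^{k}A^{\mathfrak{m}})=\mathcal{S}$, formula \eqref{ATS2Ilimiteq02} gives $A^{D,\mathfrak{m}}=\lim_{\lambda\to0}A^{k}A^{\mathfrak{m}}(\lambda I_n+A\cdot A^{k}A^{\mathfrak{m}})^{-1}$, and the identity $A\cdot A^{k}A^{\mathfrak{m}}=A^{k+1}A^{\mathfrak{m}}$ yields \eqref{minDMPlimitrepreseeq02}; formula \eqref{ATS2Ilimiteq03} gives $\lim_{\lambda\to0}(\lambda I_n+A^{k}A^{\mathfrak{m}}A)^{-1}A^{k}A^{\mathfrak{m}}$, and here I would rewrite $A^{k}A^{\mathfrak{m}}A = A^{k}(A^{\mathfrak{m}}A) = A^{k}P_{\mathcal{R}(A^{\sim}),\mathcal{N}(A)}$ and use $\mathcal{R}(A^{k})\subseteq\mathcal{R}(A^{\sim})$ (from the existence hypothesis ${\rm rank}(A^{\sim}AA^{\sim})={\rm rank}(A)$, equivalently ${\rm rank}(A^{\sim}A)={\rm rank}(A)$, so $A^{\mathfrak{m}}A$ acts as the identity on $\mathcal{R}(A^{k})$) to see $A^{k}A^{\mathfrak{m}}A=A^{k}$, giving \eqref{minDMPlimitrepreseeq03}.

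Finally, for \eqref{minDMPlimitrepreseeq04} I would start from \eqref{minDMPlimitrepreseeq03} and replace the factor $A^{k}A^{\mathfrak{m}}$ by its own limiting form: since $A^{\mathfrak{m}}=\lim_{\lambda\to0}(\lambda I_n+A^{\sim}A)^{-1}A^{\sim}$ — this is the Minkowski-space analogue of the Ben-Israel limit, valid because $A^{\mathfrak{m}}$ exists and $A^{\mathfrak{m}}=(A^{\sim}A)^{\dag}A^{\sim}$-type reasoning applies (alternatively take $H=A^{\sim}$ in Lemma \ref{ATSlimitlemma} using $\mathcal{R}(A^{\sim}A)=\mathcal{R}(A^{\sim})$ and $\mathcal{N}(A^{\sim}A)=\mathcal{N}(A^{\sim})$, and $A^{(2)}_{\mathcal{R}(A^{\sim}),\mathcal{N}(A^{\sim})}=A^{\mathfrak{m}}$ by Lemma \ref{mMPprole}\eqref{mMPRN}) — one substitutes and checks that the two limits may be combined into the single iterated limit displayed. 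The main obstacle I anticipate is this last step: justifying that a product of two separately convergent $\lambda$-limits equals the displayed expression with a single parameter $\lambda$ requires either a uniform-convergence / joint-continuity argument in $\lambda$, or a direct algebraic manipulation showing $(\lambda I_n+A^{k})^{-1}A^{k}(\lambda I_n+A^{\sim}A)^{-1}A^{\sim}$ converges to $A^{D,\mathfrak{m}}$ by expanding both resolvents simultaneously and tracking the non-singular/nilpotent block structure via the decomposition \eqref{minDMPHSdecGeq01}. I would handle it by passing to the Hartwig--Spindelb\"ock coordinates of Theorem \ref{MDMPHSdecth}, where $A^{k}$ is block upper-triangular with non-singular $(\Sigma K)^{k}$-type leading block and a nilpotent tail, so each resolvent splits and the $\lambda\to0$ limit can be computed blockwise and shown to equal \eqref{minDMPHSdecGeq01}; the other three formulas need only the routine verifications of ranges and null spaces sketched above.
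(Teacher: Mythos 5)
Your strategy coincides with the paper's proof: identify $A^{D,\mathfrak{m}}$ with $A^{(2)}_{\mathcal{R}(A^{k}A^{\mathfrak{m}}),\mathcal{N}(A^{k}A^{\mathfrak{m}})}$ via \eqref{rankAKAmerankAkeq} and Theorem \ref{MDMPATSth}\eqref{minDMPperitem3}, apply Lemma \ref{limitlemma1} with $X=A^{k}$, $Y=A^{\mathfrak{m}}$ for \eqref{minDMPlimityuaneq01}, apply Lemma \ref{ATSlimitlemma} with $H=A^{k}A^{\mathfrak{m}}$ for \eqref{minDMPlimitrepreseeq02} and \eqref{minDMPlimitrepreseeq03}, and insert the limit representation of $A^{\mathfrak{m}}$ into \eqref{minDMPlimitrepreseeq03} to get \eqref{minDMPlimitrepreseeq04}. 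Two details deserve comment. First, your justification of the identity $A^{k}A^{\mathfrak{m}}A=A^{k}$ (needed to turn $(\lambda I_n+A^{k}A^{\mathfrak{m}}A)^{-1}$ into $(\lambda I_n+A^{k})^{-1}$, a step the paper passes over silently) rests on the claim $\mathcal{R}(A^{k})\subseteq\mathcal{R}(A^{\sim})$, which is false in general: for the matrix of Example \ref{MinDMPmainExample} one has $\mathcal{R}(A^{2})={\rm span}\{(1,1,0,0,0)^{*}\}$ while $\mathcal{R}(A^{\sim})={\rm span}\{e_{1},e_{4}\}$. The identity itself is nevertheless immediate from the first Minkowski-inverse equation, $A^{k}A^{\mathfrak{m}}A=A^{k-1}(AA^{\mathfrak{m}}A)=A^{k-1}A=A^{k}$, so this is a wrong justification of a true and trivial fact rather than a fatal gap. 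Second, your concern about collapsing the two limits in \eqref{minDMPlimitrepreseeq04} is legitimate (the paper's ``substituting \dots shows \eqref{minDMPlimitrepreseeq04} immediately'' is the thinnest point of its argument), but the blockwise Hartwig--Spindelb\"ock computation you propose is heavier than necessary: both factors $(\lambda I_n+A^{k})^{-1}A^{k}$ and $(\lambda I_n+A^{\sim}A)^{-1}A^{\sim}$ converge separately as $\lambda\to 0$ (the former to $A^{k}(A^{k})^{\#}=AA^{D}$ since ${\rm Ind}(A^{k})\leq 1$, the latter to $A^{\mathfrak{m}}$ by \cite[Corollary 11]{WminMPorire}), so by continuity of matrix multiplication their product converges to $AA^{D}A^{\mathfrak{m}}=A^{D}AA^{\mathfrak{m}}=A^{D,\mathfrak{m}}$.
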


\begin{proof}
Since $\mathcal{R}(A^kA^{\mathfrak{m}})=\mathcal{R}(A^k)$ from \eqref{rankAKAmerankAkeq},  using  Theorem \ref{MDMPATSth}\eqref{minDMPperitem3} we see that
\begin{equation}\label{MinDMPATS2othereq}
 A^{D,\mathfrak{m}}=A^{(2)}_{\mathcal{R}(A^{k}), \mathcal{N}(A^{k}A^{\mathfrak{m}})}=A^{(2)}_{\mathcal{R}(A^{k}A^{\mathfrak{m}}), \mathcal{N}(A^{k}A^{\mathfrak{m}})}.
\end{equation}
Thus, applying \eqref{ATS2YZlimteq},  \eqref{ATS2Ilimiteq02} and  \eqref{ATS2Ilimiteq03} to \eqref{MinDMPATS2othereq} yields \eqref{minDMPlimityuaneq01}, \eqref{minDMPlimitrepreseeq02} and \eqref{minDMPlimitrepreseeq03}, respectively. Then, substituting
\cite[Corollary 11]{WminMPorire}, i.e., $A^{\mathfrak{m}}={\rm lim }_{\lambda  \to 0}  (\lambda I_n+A^{\sim}A)^{-1}A^{\sim}$, into \eqref{minDMPlimitrepreseeq03} shows  \eqref{minDMPlimitrepreseeq04} immediately.  This finishes the proof.
\end{proof}

\begin{example}
Let us test the matrix $A$ given in Example \ref{MinDMPmainExample}.  Then,  $k:={\rm Ind}(A)=2$,
\begin{align*}
B:&=A^{k}(\lambda I_n+A^{\mathfrak{m}}A^{k+1})^{-1}A^{\mathfrak{m}}=
A^{k}A^{\mathfrak{m}}(\lambda I_n+A^{k+1}A^{\mathfrak{m}})^{-1}\\&=
(\lambda I_n+A^{k})^{-1}A^{k}A^{\mathfrak{m}}=
\left(
  \begin{array}{ccccc}
    \frac{2}{\lambda+1} & \frac{-1}{\lambda+1} & \frac{1}{\lambda+1}& 0 & 0 \\
  \frac{2}{\lambda+1} & \frac{-1}{\lambda+1} & \frac{1}{\lambda+1}& 0 & 0 \\
   0 & 0 & 0 & 0 & 0    \\
   0 & 0 & 0 & 0 & 0    \\
   0 & 0 & 0 & 0 & 0 \\
  \end{array}
\right),\\
C:&=(\lambda I_n+A^{k})^{-1}A^{k}(\lambda I_n+A^{\sim}A)^{-1}A^{\sim}=
\left(
  \begin{array}{ccccc}
    \frac{\lambda+2}{(\lambda+1)^3} & \frac{-\lambda-1}{(\lambda+1)^3} & \frac{1}{(\lambda+1)^3} & 0 & 0 \\
   \frac{\lambda+2}{(\lambda+1)^3} & \frac{-\lambda-1}{(\lambda+1)^3} & \frac{1}{(\lambda+1)^3} & 0 & 0 \\
   0 & 0 & 0 & 0 & 0    \\
   0 & 0 & 0 & 0 & 0    \\
   0 & 0 & 0 & 0 & 0 \\
  \end{array}
\right).
\end{align*}
It is easy to check that ${\rm lim }_{\lambda  \to 0}B={\rm lim }_{\lambda  \to 0}C=A^{D,\mathfrak{m}}$, where $A^{D,\mathfrak{m}}$ has been shown  in Example \ref{MinDMPmainExample} and so  is omitted.
\end{example}

\section{Applications of the $\mathfrak{m}$-DMP inverse in solving some equations}\label{appminDMPsec}

Our motivation in this section arises mainly from the work that Ma el al.  \cite{chaDMPmare} solved singular linear  systems by using DMP inverse, and gave a condensed Cramer's  rule for computing  the DMP-inverse solution.  We start with  a consideration on a system of linear equations.

\begin{theorem}\label{appequth1}
 Let  $A\in\mathbb{C}^{n\times n}_{k}$ with ${\rm rank}(A^{\sim}AA^{\sim})={\rm rank}(A)$ and $b\in\mathbb{C}^{n}$, and let a system of linear equations be
 \begin{equation}\label{sysequations1}
 A^{k}x=A^kA^{\mathfrak{m}}b.
 \end{equation}
 Then the general solution of  the system \eqref{sysequations1} is
 \begin{equation}\label{minDMPappeqgesoeq}
 x=A^{D,\mathfrak{m}}b+(I_n-A^{D,\mathfrak{m}}A)v,
 \end{equation}
where arbitrary  $v\in\mathbb{C}^n$. Moreover,
 \begin{equation*}
 x=A^{D,\mathfrak{m}}b
 \end{equation*}
 is the unique solution to the system \eqref{sysequations1}  on  $\mathcal{R}(A^k)$.
\end{theorem}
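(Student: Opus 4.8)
The plan is to verify the three assertions separately: first that every vector of the form \eqref{minDMPappeqgesoeq} solves \eqref{sysequations1}; second that every solution has that form; and third that restricting to $\mathcal{R}(A^k)$ singles out $x=A^{D,\mathfrak{m}}b$. For the first part I would substitute \eqref{minDMPappeqgesoeq} into the left side of \eqref{sysequations1}: using $A^{D,\mathfrak{m}}A=A^DA=P_{\mathcal{R}(A^k),\mathcal{N}(A^k)}$ from Theorem \ref{MDMPATSth}\eqref{minDMPperitem5}, the term $A^k(I_n-A^{D,\mathfrak{m}}A)v$ vanishes because $A^k(I_n-A^DA)=A^k-A^k=0$ (since $A^DA$ acts as the identity on $\mathcal{R}(A^k)$), and $A^kA^{D,\mathfrak{m}}b=A^kA^{\mathfrak{m}}b$ by the defining equation $A^kX=A^kA^{\mathfrak{m}}$ in \eqref{MDMPeq}. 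Hence the left side equals $A^kA^{\mathfrak{m}}b$, as required. Note the system is always consistent, since $x=A^{D,\mathfrak{m}}b$ itself is a solution.

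For the second part, suppose $A^kx_0=A^kA^{\mathfrak{m}}b$. I would write $x_0 = A^{D,\mathfrak{m}}b + (x_0 - A^{D,\mathfrak{m}}b)$ and show the difference lies in $\mathcal{N}(A^{D,\mathfrak{m}}A)=\mathcal{N}(A^DA)=\mathcal{N}(A^k)$, so that $(I_n-A^{D,\mathfrak{m}}A)(x_0-A^{D,\mathfrak{m}}b)=x_0-A^{D,\mathfrak{m}}b$; taking $v=x_0-A^{D,\mathfrak{m}}b$ then puts $x_0$ in the claimed form (after absorbing $A^{D,\mathfrak{m}}A\,A^{D,\mathfrak{m}}b = A^{D,\mathfrak{m}}b$, which follows since $A^{D,\mathfrak{m}}$ is an outer inverse and $\mathcal{R}(A^{D,\mathfrak{m}})=\mathcal{R}(A^k)$ is fixed by $A^DA$). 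To see $x_0-A^{D,\mathfrak{m}}b\in\mathcal{N}(A^k)$: apply $A^k$ to get $A^kx_0-A^kA^{D,\mathfrak{m}}b=A^kA^{\mathfrak{m}}b-A^kA^{\mathfrak{m}}b=0$, again using $A^kA^{D,\mathfrak{m}}=A^kA^{\mathfrak{m}}$.

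For the uniqueness on $\mathcal{R}(A^k)$, first observe $A^{D,\mathfrak{m}}b\in\mathcal{R}(A^{D,\mathfrak{m}})=\mathcal{R}(A^k)$ by Theorem \ref{MDMPATSth}\eqref{minDMPperitem2}, so it is an admissible solution there. If $x_1,x_2\in\mathcal{R}(A^k)$ both solve \eqref{sysequations1}, then $A^k(x_1-x_2)=0$, so $x_1-x_2\in\mathcal{R}(A^k)\cap\mathcal{N}(A^k)=\{0\}$ by ${\rm Ind}(A)=k$; hence $x_1=x_2$, and the solution on $\mathcal{R}(A^k)$ is exactly $A^{D,\mathfrak{m}}b$. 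I do not anticipate a serious obstacle here — the argument is a routine assembly of the range/null-space identities from Theorem \ref{MDMPATSth} together with the direct sum $\mathcal{R}(A^k)\oplus\mathcal{N}(A^k)=\mathbb{C}^n$; the only point requiring a little care is keeping straight which projector $A^{D,\mathfrak{m}}A$ is (it is $P_{\mathcal{R}(A^k),\mathcal{N}(A^k)}$, \emph{not} the projector $AA^{D,\mathfrak{m}}$, whose null space is the different space $\mathcal{N}(A^kA^{\mathfrak{m}})$).
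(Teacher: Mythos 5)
Your proposal is correct and follows essentially the same route as the paper: both arguments rest on $A^kA^{D,\mathfrak{m}}=A^kA^{\mathfrak{m}}$ (so $A^{D,\mathfrak{m}}b$ is a particular solution), the identity $A^{D,\mathfrak{m}}A=A^DA=P_{\mathcal{R}(A^k),\mathcal{N}(A^k)}$ to match the homogeneous solution set $\mathcal{N}(A^k)$ with $\mathcal{R}(I_n-A^{D,\mathfrak{m}}A)$, and the direct sum $\mathcal{R}(A^k)\oplus\mathcal{N}(A^k)=\mathbb{C}^n$ for uniqueness on $\mathcal{R}(A^k)$. You merely verify the two inclusions of the solution set more explicitly than the paper's compact set-equality presentation.
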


\begin{proof}
It is clear  by \eqref{MDMPeADAm} that $A^{D,\mathfrak{m}}b$ is a solution to \eqref{sysequations1}. Hence,  using Theorem \ref{MDMPATSth}\eqref{minDMPperitem5}, we have that the set of
 all solutions of \eqref{sysequations1} is
\begin{equation*}
\left\{
  \begin{array}{c|c}
    A^{D,\mathfrak{m}}b + \alpha & \alpha \in \mathcal{N}(A^k)  \\
  \end{array}
\right\}=
\left\{
  \begin{array}{c|c}
    A^{D,\mathfrak{m}}b + \alpha & \alpha \in \mathcal{R}(I_n-A^{D,\mathfrak{m}}A) \\
  \end{array}
\right\},
\end{equation*}
which shows that the general solution of \eqref{sysequations1} is \eqref{minDMPappeqgesoeq}. Moreover, since $\mathcal{R}(A^k) \oplus \mathcal{N}(A^k)= \mathbb{C}^{n}$ by ${\rm Ind}(A)=k$, using  Theorem \ref{MDMPATSth}\eqref{minDMPperitem2} we see that $A^{D,\mathfrak{m}}b\in\mathcal{R}(A^k)$ is the unique solution to  \eqref{sysequations1}  on  $\mathcal{R}(A^k)$.
\end{proof}

Wang et al.   \cite{MCore} considered an  interesting  least-squares problem in Frobenius norm,  that is,
\begin{equation*}
     \Vert (AA^{\dag})^{\sim}Ax-b  \Vert_{F} ={\rm min} \text{ subject to }x\in\mathcal{R}(A),
\end{equation*}
where $A\in\mathbb{C}^{n\times n}_{1}$ with ${\rm rank}(A^{\sim}A)={\rm rank}(A)<n$, and $b \in\mathbb{C}^{n}$. In the following theorem, we  discuss  an analogous optimization problem on the $\mathfrak{m}$-DMP inverse.

\begin{theorem}\label{minFnormth}
 Let  $A\in\mathbb{C}^{n\times n}_{k}$ be given by \eqref{HSAdec} with $r:={\rm rank}(A^{\sim}AA^{\sim})={\rm rank}(A)<n$. Let  $b\in\mathbb{C}^{n}$ be
 $b=GU\left(\begin{array}{c}  b_1 \\ b_2\\\end{array}\right)$, where $b_1\in\mathbb{C}^{r}$ satisfies  $G_1^{-1}b_1\in\mathcal{R}\left((\Sigma K)^{D}\right)$ and $G_1$ is given by \eqref{G1G2G3G4eq},  and $b_2\in\mathbb{C}^{n-r}$.
 Then
 \begin{equation}\label{mineq}
 \mathop {\min }\limits_{x\in\mathcal{R}({A^{k}})}  \Vert (AA^{\dag})^{\sim}Ax-b  \Vert_{F}=\left\Vert b_2 \right\Vert_{F}.
 \end{equation}
 Moreover,
  \begin{equation*}
 x=A^{D,\mathfrak{m}}b
 \end{equation*}
is the unique solution of \eqref{mineq}.
\end{theorem}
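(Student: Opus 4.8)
The plan is to work in the Hartwig–Spindelb\"{o}ck coordinates of Lemma~\ref{HSth}, since both $A$ and $A^{D,\mathfrak{m}}$ have explicit block forms there (via \eqref{minDMPHSdecGeq01}), and reduce the minimization to an elementary least-squares computation. First I would write $x = GU\left(\begin{array}{c} x_1 \\ x_2 \end{array}\right)$ and use \eqref{HSAdec} together with the identity $AA^{\dag} = U\left(\begin{array}{cc} I_r & 0 \\ 0 & 0 \end{array}\right)U^*$ to compute $(AA^{\dag})^{\sim} = G(AA^{\dag})^*F = GU\left(\begin{array}{cc} I_r & 0 \\ 0 & 0 \end{array}\right)U^*G$ (noting $F=G$ here since $A$ is square and $n=m$). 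Multiplying out, $(AA^{\dag})^{\sim}A$ collapses to a matrix whose only nonzero block is the top $r\times n$ strip, and combined with the expression $b = GU\left(\begin{array}{c} b_1 \\ b_2\end{array}\right)$ the vector $(AA^{\dag})^{\sim}Ax - b$ becomes, after left-multiplying by the orthogonal factor $U^*G$ (or rather observing that $G$ is an isometry up to sign — care is needed, see below), something of the shape $\left(\begin{array}{c} (\text{stuff})\,x_1 + (\text{stuff})\,x_2 - b_1 \\ -b_2 \end{array}\right)$.

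The key point will be the constraint: by Theorem~\ref{MDMPATSth}\eqref{minDMPperitem2} we have $x \in \mathcal{R}(A^k) = \mathcal{R}(A^{D,\mathfrak{m}})$, and from \eqref{minDMPHSdecGeq01} the range of $A^{D,\mathfrak{m}}$ is $U\left(\begin{array}{c}\mathcal{R}((\Sigma K)^D G_1^{-1})\\ 0\end{array}\right) = U\left(\begin{array}{c}\mathcal{R}((\Sigma K)^D)\\ 0\end{array}\right)$ (using that $G_1$ is nonsingular by Theorem~\ref{HSmMpdecsecth}\eqref{HSmMpdecitem2}, noting ${\rm rank}(A^{\sim}A)={\rm rank}(A)$ follows from the hypothesis). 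So in coordinates the constraint forces $x_2 = 0$ and $x_1 \in \mathcal{R}((\Sigma K)^D)$. Then I would show that the first block of the residual can be driven exactly to zero by choosing $x_1$ appropriately — this is precisely where the hypothesis $G_1^{-1}b_1 \in \mathcal{R}((\Sigma K)^D)$ enters: it guarantees the linear system for $x_1$ is consistent within the allowed subspace. The residual then reduces to $\left(\begin{array}{c} 0 \\ -b_2\end{array}\right)$, whose Frobenius norm is $\Vert b_2\Vert_F$, establishing \eqref{mineq}.

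For the uniqueness and the identification of the minimizer as $x = A^{D,\mathfrak{m}}b$, I would argue as follows. The minimizing $x_1$ is unique among vectors in $\mathcal{R}((\Sigma K)^D)$: on that subspace the relevant map is injective because $(\Sigma K)^D$ restricted to $\mathcal{R}((\Sigma K)^D) = \mathcal{R}((\Sigma K)^k)$ is invertible (the group-invertible part), so there is a unique $x_1$ solving the residual equation. Finally, a direct computation from \eqref{minDMPHSdecGeq01} gives $A^{D,\mathfrak{m}}b = U\left(\begin{array}{c}(\Sigma K)^D G_1^{-1} b_1 \\ 0\end{array}\right)$, and one checks this lies in $\mathcal{R}((\Sigma K)^D)$ and satisfies the zero-residual condition, hence equals the unique minimizer.

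The main obstacle I anticipate is handling the Minkowski adjoint carefully: $G$ is not unitary (it squares to $I_n$ but $G^* = G \ne G^{-1}$ only in the sense that $G^2=I$, so actually $G^{-1}=G$), and the interplay between $U^*$, $G$, and the block partition \eqref{G1G2G3G4eq} of $G$ in the $U$-basis must be tracked precisely when simplifying $(AA^{\dag})^{\sim}A$ and the residual — in particular, $\Vert G v\Vert_F \ne \Vert v\Vert_F$ in general, so one cannot simply peel off $G$ inside the norm. The resolution is that the residual's second block is $-b_2$ already in the $GU$-coordinates used to define $b$, and one must verify that the $G$-twisting does not mix the two blocks in a way that spoils the clean split; this is exactly controlled by the block structure and the fact that $AA^{\dag}$ projects onto the first $r$ coordinates in the $U$-basis. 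Once that bookkeeping is done, the rest is routine linear algebra.
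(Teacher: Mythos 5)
Your overall strategy is essentially the paper's: pass to Hartwig--Spindelb\"{o}ck coordinates, use $AA^{\dag}=U\,{\rm diag}(I_r,0)\,U^*$ together with the block form \eqref{G1G2G3G4eq} of $G$ to reduce the residual to a two-block vector whose second block is $-b_2$, and then invoke the hypothesis $G_1^{-1}b_1\in\mathcal{R}\left((\Sigma K)^{D}\right)$ to annihilate the first block. The only structural difference is that the paper parameterizes the constraint set as $x=A^Dy$ with $y=U\left(\begin{array}{c}y_1\\y_2\end{array}\right)$ arbitrary, whereas you parameterize $\mathcal{R}(A^k)=\mathcal{R}(A^{D,\mathfrak{m}})$ directly from \eqref{minDMPHSdecGeq01}; that is cosmetic, and your version is if anything slightly cleaner. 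Two steps of your write-up, however, are wrong as stated and need repair.

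First, you set $x=GU\left(\begin{array}{c}x_1\\x_2\end{array}\right)$ but then assert that the constraint $x\in\mathcal{R}(A^k)$ forces $x_2=0$. The range you computed, $U\left(\begin{array}{c}\mathcal{R}((\Sigma K)^D)\\0\end{array}\right)$, lives in the $U$-basis, not the $GU$-basis: since $U^*GU=\left(\begin{array}{cc}G_1&G_2\\G_2^*&G_4\end{array}\right)$ is not block diagonal, one has $U\left(\begin{array}{c}w\\0\end{array}\right)=GU\left(\begin{array}{c}G_1w\\G_2^*w\end{array}\right)$, whose second block is generally nonzero. You must write $x=U\left(\begin{array}{c}x_1\\x_2\end{array}\right)$ for the constraint to read $x_2=0$ and $x_1\in\mathcal{R}((\Sigma K)^D)$. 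Second, your declared ``main obstacle'' is a non-issue, and your proposed resolution does not actually resolve it: the Minkowski metric matrix $G={\rm diag}(1,-I_{n-1})$ satisfies $G^*G=G^2=I_n$, so $G$ (hence $GU$) \emph{is} unitary and $\Vert GUv\Vert_F=\Vert v\Vert_F$ for every $v$. That identity is precisely what licenses peeling off the factor $GU$ and splitting the norm as $\left(\Vert\cdot\Vert_F^2+\Vert b_2\Vert_F^2\right)^{1/2}\geq\Vert b_2\Vert_F$; without it, observing that ``the second block is $-b_2$ in the $GU$-coordinates'' says nothing about the Frobenius norm of the residual. With these two corrections, your remaining steps --- solvability of $\Sigma K x_1=G_1^{-1}b_1$ within $\mathcal{R}((\Sigma K)^D)$, uniqueness from the invertibility of $\Sigma K$ on its core subspace, and the identification $x_1=(\Sigma K)^DG_1^{-1}b_1$, i.e.\ $x=A^{D,\mathfrak{m}}b$ --- are correct and agree with the paper's proof.
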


\begin{proof}
For every $x\in\mathcal{R}(A^k)$, it follows from Lemma \ref{Dinverseperle}\eqref{Dinverseperitem1} that there exits $y\in\mathbb{C}^{n}$ such that $x=A^Dy$.  Put
$y=U\left(
      \begin{array}{c}
        y_1 \\
        y_2 \\
      \end{array}
    \right)
$,
where $y_1\in\mathbb{C}^{r}$ and $y_2\in\mathbb{C}^{n-r}$.
Using \cite[Formula 2.2]{Coreinverse}, i.e.,
$AA^{\dag}=U\left(\begin{array}{cc} I_r & 0 \\0 & 0 \\\end{array}\right)U^*$, from \eqref{HSAdec}, \eqref{G1G2G3G4eq} and \eqref{Ddec01}  we infer  that
\begin{align*}
\left\Vert(AA^{\dag})^{\sim}Ax-b   \right\Vert_{F}
=&\left\Vert(AA^{\dag})^{\sim}AA^Dy-b   \right\Vert_{F}
\\
=&
\left\Vert GU
\left(\begin{array}{cc} I_r & 0 \\0 & 0 \\\end{array}\right)
 \left(\begin{array}{cc}G_1 & G_2 \\ G_2^* & G_4 \\\end{array}\right)
 \left(\begin{array}{cc}\Sigma K & \Sigma L \\ 0 & 0 \\\end{array}\right) \right. \\
& ~~\left. \left(\begin{array}{cc}(\Sigma K)^{D} & \left((\Sigma K)^D\right)^2\Sigma L \\0 & 0 \\\end{array}\right) \left(\begin{array}{c}y_1\\y_2\\\end{array}\right)- GU \left(\begin{array}{c}b_1\\b_2\\\end{array}\right)
 \right\Vert_{F} \\
= & \left\Vert \left(\begin{array}{c}G_1(\Sigma K)^D\Sigma K y_1 +G_1 (\Sigma K)^D\Sigma L y_2 -b_1\\- b_2\\\end{array}\right)
  \right\Vert_{F} \\
  =&\left( \left\Vert G_1(\Sigma K)^D\Sigma K y_1 +G_1 (\Sigma K)^D\Sigma L y_2 -b_1 \right\Vert_{F}^2 + \left\Vert b_2 \right\Vert_{F}^2 \right)^{\frac{1}{2}} \geq  \left\Vert b_2 \right\Vert_{F}.
\end{align*}
Since $\Sigma K(\Sigma K)^DG_1^{-1}b_1=G_1^{-1}b_1$ by the condition  $G_1^{-1}b_1\in\mathcal{R}\left((\Sigma K)^{D}\right)$,  we have that if
\begin{equation}\label{minDMPappy1eq}
  y_1=(\Sigma K)^D\Sigma Ly_2-G_1^{-1}b_1 \text{ and } y_2\in\mathbb{C}^{n-r},
\end{equation}
then $G_1(\Sigma K)^D\Sigma K y_1 +G_1 (\Sigma K)^D\Sigma L y_2 -b_1 =0$, which implies that $\left\Vert(AA^{\dag})^{\sim}Ax-b   \right\Vert_{F}$  assumes  the  minimum value,
\begin{equation*}
 \mathop {\min }\limits_{x\in\mathcal{R}({A^{k}})}  \Vert (AA^{\dag})^{\sim}Ax-b  \Vert_{F}= \left\Vert b_2 \right\Vert_{F}.
\end{equation*}
Therefore, by \eqref{Ddec01},  \eqref{minDMPappy1eq} and \eqref{minDMPHSdecGeq01},   it follows that
\begin{align*}
 x&=A^Dy= U \left(\begin{array}{cc}(\Sigma  K)^D & \left((\Sigma K)^D\right)^2\Sigma L \\0 & 0 \\\end{array}\right) \left(\begin{array}{c}y_1\\y_2\\\end{array}\right)\\
&=U \left(\begin{array}{c}(\Sigma K)^D(-(\Sigma K)^D\Sigma Ly_2-G_1^{-1}b_1)+ \left((\Sigma K)^D\right)^2\Sigma L y_2\\0  \\\end{array}\right)\\
&=U\left(\begin{array}{c}(\Sigma K)^DG_1^{-1}b_1\\0  \\\end{array}\right)=A^{D,\mathfrak{m}}b,
\end{align*}
which shows that $ x=A^{D,\mathfrak{m}}b$ is the unique solution of \eqref{mineq}.
\end{proof}

\begin{remark}
 Let  $A\in\mathbb{C}^{n\times n}_{k}$ be given by \eqref{HSAdec} with $r:={\rm rank}(A)< n$, and let  $b\in\mathbb{C}^{n}$ be
 $b=U\left(\begin{array}{c}  b_1 \\ b_2\\\end{array}\right)$, where $b_1\in\mathbb{C}^{r}$ is  such that $b_1\in\mathcal{R}\left((\Sigma K)^{D}\right)$,  and $b_2\in\mathbb{C}^{n-r}$. In terms of the same argument  in Theorem \ref{minFnormth}, we have that
 \begin{equation}\label{DMPappeq}
\mathop {\min }\limits_{x\in\mathcal{R}({A^{k}})}  \Vert Ax-b  \Vert_{F}=\left\Vert b_2 \right\Vert_{F}.
\end{equation}
Futhermore, $ x=A^{D,\dag}b$
is the unique solution of \eqref{DMPappeq}.
\end{remark}

We end up this section with presenting a condensed Cramer's  rule to directly calculate  the unique solution of \eqref{sysequations1}    and \eqref{mineq}.  Let  the determinant of $A\in\mathbb{C}^{ n \times n}$  be  ${\rm det}(A)$, and by $A(i\to b)$ we denote a matrix obtained by replacing the $i$th column of $A\in\mathbb{C}^{ n \times n}$ with $b\in\mathbb{C}^{n}$.

\begin{theorem}\label{MinDMPcramerth}
Let  $A\in\mathbb{C}^{n\times n}_{k}$ with ${\rm rank}(A^{\sim}AA^{\sim})={\rm rank}(A)$,  $b\in\mathbb{C}^n$, and   $t={\rm rank}(A^{k})$. Assume $V\in\mathbb{C}^{n\times (n-t)}$ and $W\in\mathbb{C}^{(n-t)\times n}$ are such that $\mathcal{R}(V)=\mathcal{N}(A^{k})$ and $\mathcal{N}(W)=\mathcal{R}(A^{k})$. Denote $E=V(WV)^{-1}W$. Then
  the components of  $x=A^{D,\mathfrak{m}}b$ are given by
\begin{equation}\label{applcomponenteq}
x_i=\frac{{\rm det}((A^k+E)(i\to A^kA^{\mathfrak{m}}b))}{{\rm det}(A^k+E)},~i=1,2,..,n.
\end{equation}
\end{theorem}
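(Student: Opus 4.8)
The plan is to reduce the condensed Cramer's rule to a known determinantal representation of an outer inverse $A^{(2)}_{\mathcal{T},\mathcal{S}}$ via an auxiliary nonsingular matrix, exactly in the spirit of the classical treatment for the Drazin and DMP inverses. First I would record the two structural facts already at our disposal: by Theorem \ref{MDMPATSth}\eqref{minDMPperitem2} and \eqref{minDMPperitem3} we have $A^{D,\mathfrak{m}}=A^{(2)}_{\mathcal{R}(A^{k}),\mathcal{N}(A^{k}A^{\mathfrak{m}})}$ with $\mathcal{R}(A^{D,\mathfrak{m}})=\mathcal{R}(A^{k})$ and $\mathrm{rank}(A^{D,\mathfrak{m}})=t=\mathrm{rank}(A^{k})$, and by \eqref{rankAKAmerankAkeq} also $\mathcal{R}(A^{k}A^{\mathfrak{m}})=\mathcal{R}(A^{k})$. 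The matrix $E=V(WV)^{-1}W$ is the projector onto $\mathcal{N}(A^{k})$ along $\mathcal{R}(A^{k})$: indeed $WV$ is nonsingular because $\mathcal{N}(A^k)\oplus\mathcal{R}(A^k)=\mathbb{C}^n$ forces $\mathcal{R}(V)\cap\mathcal{N}(W)=\{0\}$ and a dimension count, so $E^2=E$, $\mathcal{R}(E)=\mathcal{R}(V)=\mathcal{N}(A^{k})$ and $\mathcal{N}(E)=\mathcal{N}(W)=\mathcal{R}(A^{k})$.

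Next I would prove that $A^{k}+E$ is nonsingular. Since $\mathcal{R}(A^k)\oplus\mathcal{N}(A^k)=\mathbb{C}^n$, any $z$ with $(A^k+E)z=0$ splits the equation into the complementary subspaces $\mathcal{R}(A^k)$ and $\mathcal{N}(A^k)$: writing $z=z_1+z_2$ with $z_1\in\mathcal{R}(A^k)$, $z_2\in\mathcal{N}(A^k)$, one gets $A^kz=A^kz_1\in\mathcal{R}(A^k)$ and $Ez=Ez_2\in\mathcal{N}(A^k)$, whence both vanish; $Ez_2=z_2$ gives $z_2=0$, and $A^kz_1=0$ with $z_1\in\mathcal{R}(A^k)$ gives $z_1=0$ because $A$ restricted to $\mathcal{R}(A^k)$ is invertible (as $\mathrm{Ind}(A)=k$). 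Hence $\mathrm{det}(A^k+E)\neq0$, so the right-hand side of \eqref{applcomponenteq} is well defined.

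Then I would establish the key matrix identity
\begin{equation*}
(A^{k}+E)\,A^{D,\mathfrak{m}}=A^{k}A^{\mathfrak{m}}P_{\mathcal{R}(A^{k}),\mathcal{N}(A^{k}A^{\mathfrak{m}})}=A^{k}A^{\mathfrak{m}},
\end{equation*}
or rather the form convenient for Cramer, namely that $x=A^{D,\mathfrak{m}}b$ is the unique solution of the nonsingular linear system $(A^{k}+E)x=A^{k}A^{\mathfrak{m}}b$. To see this: by Theorem \ref{appequth1}, $x=A^{D,\mathfrak{m}}b$ is the unique solution of $A^{k}x=A^{k}A^{\mathfrak{m}}b$ lying in $\mathcal{R}(A^k)$, so $Ex=0$ and $(A^k+E)x=A^kx=A^kA^{\mathfrak{m}}b$; conversely, if $(A^k+E)y=A^kA^{\mathfrak{m}}b$ then applying $E$ and using $EA^k=0$ (because $\mathcal{R}(A^k)=\mathcal{N}(E)$) together with $EA^kA^{\mathfrak{m}}=0$ (since $\mathcal{R}(A^kA^{\mathfrak{m}})=\mathcal{R}(A^k)=\mathcal{N}(E)$) gives $E^2y=0$, i.e. $Ey=0$, hence $y\in\mathcal{N}(E)=\mathcal{R}(A^k)$ and $A^ky=A^kA^{\mathfrak{m}}b$, so $y=A^{D,\mathfrak{m}}b$ by uniqueness in Theorem \ref{appequth1}. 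Finally, applying the ordinary Cramer's rule to the nonsingular system $(A^k+E)x=A^kA^{\mathfrak{m}}b$ yields \eqref{applcomponenteq}.

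The only genuine obstacle is the bookkeeping with the complementary projectors — checking $WV$ nonsingular, $EA^k=EA^kA^{\mathfrak{m}}=0$, $A^k+E$ nonsingular — all of which rest on the single fact $\mathcal{R}(A^k)\oplus\mathcal{N}(A^k)=\mathbb{C}^n$ and on $\mathcal{R}(A^kA^{\mathfrak{m}})=\mathcal{R}(A^k)$ from \eqref{rankAKAmerankAkeq}; once these are in place the Cramer step is immediate. One should also note at the end that the same argument, with $A^{k}A^{\mathfrak{m}}b$ replaced appropriately, covers the solution of \eqref{mineq} because Theorem \ref{minFnormth} identifies its unique solution as the same vector $A^{D,\mathfrak{m}}b$.
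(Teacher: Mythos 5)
Your proposal is correct and follows essentially the same route as the paper: both reduce the claim to the nonsingular linear system $(A^k+E)x=A^kA^{\mathfrak{m}}b$, identify $A^{D,\mathfrak{m}}b$ as its unique solution, and then apply the ordinary Cramer's rule. The only difference is that the paper imports the nonsingularity of $A^k+E$ and the identity $(A^k+E)^{-1}=(A^k)^{D}+E^{\#}$ from \cite[Theorem 3.1]{chaDMPmare} and computes $x$ by multiplying out, whereas you verify the projector properties of $E$, the invertibility of $A^k+E$, and the solution directly from $\mathcal{R}(A^k)\oplus\mathcal{N}(A^k)=\mathbb{C}^n$ and $\mathcal{R}(A^kA^{\mathfrak{m}})=\mathcal{R}(A^k)$, which makes your argument self-contained.
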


\begin{proof}
 Since \cite[Theorem 3.1]{chaDMPmare} has proved  that $E$ exists,  $\mathcal{N}(E)=\mathcal{R}(A^k)$, and $(A^k+E)^{-1}=(A^k)^{D}+E^{\#}$, from  Lemma \ref{Dinverseperle}\eqref{Dinverseperitem2} and  \eqref{MDMPeADAm} we see that
the system of linear equations
\begin{equation}\label{minDMPbtreq}
(A^k+E)x=A^kA^{\mathfrak{m}}b
\end{equation}
 has the unique solution
\begin{equation*}
x=(A^k+E)^{-1}A^kA^{\mathfrak{m}}b =(A^k)^{D}A^kA^{\mathfrak{m}}b+E^{\#}A^kA^{\mathfrak{m}}b =A^{D}AA^{\mathfrak{m}}b=A^{D,\mathfrak{m}}b.
\end{equation*}
Finally, applying Cramer's rule \cite{Cramerre}  to the nonsingular linear system \eqref{minDMPbtreq} gives \eqref{applcomponenteq} immediately.
\end{proof}

\begin{example}
Consider the matrix $A$ given in Example \ref{MinDMPmainExample}, and let
\begin{equation*}
 b=\left(
     \begin{array}{c}
0.15735\\0.15735\\0.1415\\-0.1\\-0.2\\
     \end{array}
   \right),
   V=\left(
     \begin{array}{cccc}
  0&0&0&0\\1&0&0&0\\0&1&0&0\\0&0&1&0\\0&0&0&1\\
     \end{array}
   \right),
   W=\left(
     \begin{array}{ccccc}
   0&0&1&0&0\\0&0&0&1&0\\0&0&0&0&1\\-1&1&0&0&0\\
     \end{array}
   \right).
\end{equation*}
Then, $k:={\rm Ind}(A)=2$,  and the Hartwig-Spindelb\"{o}ck decomposition of $A$ is
$A=U\left(
     \begin{array}{cc}
       \Sigma K & \Sigma L \\ 0 & 0 \\
     \end{array}
   \right)U^*$, where
\begin{align*}
U&=\left(
     \begin{array}{ccccc}
-0.40825&0.70711&0&0&0.57735\\-0.8165&0&0&0&-0.57735\\-0.40825&-0.70711&0&0&0.57735\\0&0&0&1&0\\0&0&1&0&0\\
     \end{array}
   \right),
\Sigma=\left(
     \begin{array}{cc}
1.7321&0\\0&1\\
     \end{array}
   \right),\\
K&=  \left(
     \begin{array}{cc}
0.28868&-0.5\\-0.28868&0.5\\
     \end{array}
   \right),
L=\left(
     \begin{array}{ccc}
0&-0.70711&-0.40825\\0&-0.70711&0.40825\\
     \end{array}
   \right).
\end{align*}
Furthermore,
\begin{align*}
  G_1&= \left(
     \begin{array}{cc}
-0.66667&-0.57735\\-0.57735&0\\
     \end{array}
   \right),
b_1=\left(
     \begin{array}{c}
 0.12201\\0.21132\\
     \end{array}
   \right),
   b_2= \left(
     \begin{array}{c}
0.2\\0.1\\0.1\\
     \end{array}
   \right), \\
   (\Sigma K)^{D}&=\left(
     \begin{array}{cc}
0.5&-0.86603\\-0.28868&0.5\\
     \end{array}
   \right),
   A^k=\left(
         \begin{array}{ccccc}
 1&0&0&0&0\\1&0&0&0&0\\0&0&0&0&0\\0&0&0&0&0\\0&0&0&0&0\\
         \end{array}
       \right),
 E=\left(
         \begin{array}{ccccc}
 0&0&0&0&0\\-1&1&0&0&0\\0&0&1&0&0\\0&0&0&1&0\\0&0&0&0&1\\
         \end{array}
       \right).
\end{align*}
Then it can easily be checked that ${\rm rank}(A^k)=1$, $G_1^{-1}b_1\in\mathcal{R}\left((\Sigma K)^{D}\right)$, $\mathcal{R}(V)=\mathcal{N}(A^{k})$ and $\mathcal{N}(W)=\mathcal{R}(A^{k})$. Using Theorems \ref{appequth1} and \ref{minFnormth}, we have that the unique solution of  the system \eqref{sysequations1}  on  $\mathcal{R}(A^k)$ and  the system \eqref{mineq} is
\begin{equation}\label{AmDMPbexample}
 x =A^{D,\mathfrak{m}}b= \left(
                          \begin{array}{ccccc}
   0.29885&0.29885&0&0&0\\
                          \end{array}
                        \right)^*,
\end{equation}
and
\begin{equation*}
  \mathop {\min }\limits_{x\in\mathcal{R}({A^{k}})}  \Vert (AA^{\dag})^{\sim}Ax-b  \Vert_{F}=\left\Vert b_2 \right\Vert_{F}=0.24495.
\end{equation*}
And, it is easy to check that the solution $x$ calculated by  \eqref{applcomponenteq} in Theorem \ref{MinDMPcramerth} is equal to $x$ given in \eqref{AmDMPbexample}.
\end{example}

\section{Conclusion}\label{conclusionsec}
This paper  defines  the $\mathfrak{m}$-DMP inverse in Minkowski space, and shows some of its properties, characterizations, representations,  and  applications in solving a system of linear equations and a constrained least norm problem.
\par
Not only because the $\mathfrak{m}$-DMP inverse, as a new generalized inverse, is an extension of the DMP inverse in Minkowski space, but also because of the wide research background of the DMP inverse, we are convinced that the $\mathfrak{m}$-DMP inverse  still has more  potential results and applications  to explore.
Several  future directions for the research of the $\mathfrak{m}$-DMP inverse can be described as follows:
\begin{itemize}
   \item[$(1)$] The perturbation analysis and  iterative methods for the  $\mathfrak{m}$-DMP inverse will  be two  topics  worth studying.

  \item[$(2)$] Generalizing generalized inverses by weighting is always an important part in studying generalized inverses. And, Meng \cite{DMPfutre7}  defined $W$-weighted DMP inverse of a rectangular matrix, which is a generalization of the DMP inverse of a square matrix.  It is  equally interesting to discuss the $\mathfrak{m}$-DMP inverse for rectangular matrices.

  \item[$(3)$] Inspired by  the work of \cite{DMPfutre21},  it is natural to ask what interesting  characterizations and applications  for the two new matrix classes $A^{D}AA^{\sim}$ and $A^{\sim}AA^{D}$ can be obtained.

\end{itemize}



\begin{thebibliography}{43}
\ifx \bisbn   \undefined \def \bisbn  #1{ISBN #1}\fi
\ifx \binits  \undefined \def \binits#1{#1}\fi
\ifx \bauthor  \undefined \def \bauthor#1{#1}\fi
\ifx \batitle  \undefined \def \batitle#1{#1}\fi
\ifx \bjtitle  \undefined \def \bjtitle#1{#1}\fi
\ifx \bvolume  \undefined \def \bvolume#1{\textbf{#1}}\fi
\ifx \byear  \undefined \def \byear#1{#1}\fi
\ifx \bissue  \undefined \def \bissue#1{#1}\fi
\ifx \bfpage  \undefined \def \bfpage#1{#1}\fi
\ifx \blpage  \undefined \def \blpage #1{#1}\fi
\ifx \burl  \undefined \def \burl#1{\textsf{#1}}\fi
\ifx \doiurl  \undefined \def \doiurl#1{\url{https://doi.org/#1}}\fi
\ifx \betal  \undefined \def \betal{\textit{et al.}}\fi
\ifx \binstitute  \undefined \def \binstitute#1{#1}\fi
\ifx \binstitutionaled  \undefined \def \binstitutionaled#1{#1}\fi
\ifx \bctitle  \undefined \def \bctitle#1{#1}\fi
\ifx \beditor  \undefined \def \beditor#1{#1}\fi
\ifx \bpublisher  \undefined \def \bpublisher#1{#1}\fi
\ifx \bbtitle  \undefined \def \bbtitle#1{#1}\fi
\ifx \bedition  \undefined \def \bedition#1{#1}\fi
\ifx \bseriesno  \undefined \def \bseriesno#1{#1}\fi
\ifx \blocation  \undefined \def \blocation#1{#1}\fi
\ifx \bsertitle  \undefined \def \bsertitle#1{#1}\fi
\ifx \bsnm \undefined \def \bsnm#1{#1}\fi
\ifx \bsuffix \undefined \def \bsuffix#1{#1}\fi
\ifx \bparticle \undefined \def \bparticle#1{#1}\fi
\ifx \barticle \undefined \def \barticle#1{#1}\fi
\ifx \bconfdate \undefined \def \bconfdate #1{#1}\fi
\ifx \botherref \undefined \def \botherref #1{#1}\fi
\ifx \url \undefined \def \url#1{\textsf{#1}}\fi
\ifx \bchapter \undefined \def \bchapter#1{#1}\fi
\ifx \bbook \undefined \def \bbook#1{#1}\fi
\ifx \bcomment \undefined \def \bcomment#1{#1}\fi
\ifx \oauthor \undefined \def \oauthor#1{#1}\fi
\ifx \citeauthoryear \undefined \def \citeauthoryear#1{#1}\fi
\ifx \endbibitem  \undefined \def \endbibitem {}\fi
\ifx \bconflocation  \undefined \def \bconflocation#1{#1}\fi
\ifx \arxivurl  \undefined \def \arxivurl#1{\textsf{#1}}\fi
\csname PreBibitemsHook\endcsname


\bibitem{WminMPfurre}
\begin{barticle}
\bauthor{\bsnm{Al-Zhour}, \binits{Z.}}:
\batitle{Extension and generalization properties of the weighted Minkowski
  inverse in a Minkowski space for an arbitrary matrix}.
\bjtitle{Comput. Math. Appl.}
\bvolume{70},
\bfpage{954}--\blpage{961}
(\byear{2015}).
\doiurl{10.1016/j.camwa.2015.06.015}
\end{barticle}
\endbibitem


\bibitem{Coreinverse}
\begin{barticle}
\bauthor{\bsnm{Baksalary}, \binits{O.M.}},
\bauthor{\bsnm{Trenkler}, \binits{G.}}:
\batitle{Core inverse of matrices}.
\bjtitle{Linear Multilinear Algebra}
\bvolume{58},
\bfpage{681}--\blpage{697}
(\byear{2010}).
\doiurl{10.1080/03081080902778222}
\end{barticle}
\endbibitem


\bibitem{benlimitre}
\begin{barticle}
\bauthor{\bsnm{Ben-Israel}, \binits{A.}},
\bauthor{\bsnm{Charnes}, \binits{A.}}:
\batitle{Contributions to the theory of generalized inverses}.
\bjtitle{J. Soc. Indust. Appl. Math.}
\bvolume{11},
\bfpage{667}--\blpage{699}
(\byear{1963}).
\doiurl{10.1137/0111051}
\end{barticle}
\endbibitem

\bibitem{Drazinifurafre}
\begin{barticle}
\bauthor{\bsnm{Cline}, \binits{R.E.}}:
\batitle{Inverses of rank invariant powers of a matrix}.
\bjtitle{SIAM J. Numer. Anal.}
\bvolume{5},
\bfpage{182}--\blpage{197}
(\byear{1968}).
\doiurl{10.1137/0705015}
\end{barticle}
\endbibitem


\bibitem{Drainzeinre}
\begin{barticle}
\bauthor{\bsnm{Drazin}, \binits{M.P.}}:
\batitle{Pseudo-inverses in associative rings and semigroups}.
\bjtitle{Amer. Math. Monthly}
\bvolume{65},
\bfpage{506}--\blpage{514}
(\byear{1958}).
\doiurl{10.2307/2308576}
\end{barticle}
\endbibitem

\bibitem{grouporgre}
\begin{barticle}
\bauthor{\bsnm{Erdelyi}, \binits{I.}}:
\batitle{On the matrix equation $A = \lambda Bx$}.
\bjtitle{J. Math. Anal. Appl.}
\bvolume{17},
\bfpage{119}--\blpage{132}
(\byear{1967}).
\doiurl{10.1016/0022-247X(67)90169-2}
\end{barticle}
\endbibitem


\bibitem{mixclasschagire}
\begin{barticle}
\bauthor{\bsnm{Ferreyra}, \binits{D.E.}},
\bauthor{\bsnm{Levis}, \binits{F.E.}},
\bauthor{\bsnm{Thome}, \binits{N.}}:
\batitle{Maximal classes of matrices determining generalized inverses}.
\bjtitle{Appl. Math. Comput.}
\bvolume{333},
\bfpage{42}--\blpage{52}
(\byear{2018}).
\doiurl{10.1016/j.amc.2018.03.102}
\end{barticle}
\endbibitem




\bibitem{HSdec}
\begin{barticle}
\bauthor{\bsnm{Hartwig}, \binits{R.E.}},
\bauthor{\bsnm{Spindelb\"{o}ck}, \binits{K.}}:
\batitle{Matrices for which $A^{*}$ and $A^{\dag}$ commute}.
\bjtitle{Linear Multilinear Algebra}
\bvolume{14},
\bfpage{241}--\blpage{256}
(\byear{1984}).
\doiurl{10.1080/03081088308817561}
\end{barticle}
\endbibitem

\bibitem{DMPfutre24}
\begin{botherref}
\oauthor{\bsnm{Hern\'{a}ndez}, \binits{M.V.}},
\oauthor{\bsnm{Lattanzi}, \binits{M.B.}},
\oauthor{\bsnm{Thome}, \binits{N.}}:
GDMP-inverses of a matrix and their duals.
Linear Multilinear Algebra,
1--13
(2020).
\doiurl{10.1080/03081087.2020.1857678}
\end{botherref}
\endbibitem

\bibitem{indef}
\begin{barticle}
\bauthor{\bsnm{Kamaraj}, \binits{K.}},
\bauthor{\bsnm{Sivakumar}, \binits{K.C.}}:
\batitle{Moore-Penrose inverse in an indefinite inner product space}.
\bjtitle{J. Appl. Math. Comput.}
\bvolume{19},
\bfpage{297}--\blpage{310}
(\byear{2005}).
\doiurl{10.1007/BF02935806}
\end{barticle}
\endbibitem

\bibitem{WminMPorire}
\begin{barticle}
\bauthor{\bsnm{K{\i}l{\i}\c{c}man}, \binits{A.}},
\bauthor{\bsnm{Zhour}, \binits{Z.A.}}:
\batitle{The representation and approximation for the weighted Minkowski
  inverse in Minkowski space}.
\bjtitle{Math. Comput. Modelling}
\bvolume{47},
\bfpage{363}--\blpage{371}
(\byear{2008}).
\doiurl{10.1016/j.mcm.2007.03.031}
\end{barticle}
\endbibitem


\bibitem{partordmEPre}
\begin{barticle}
\bauthor{\bsnm{Krishnaswamy}, \binits{D.}},
\bauthor{\bsnm{Lone}, \binits{M.S.}}:
\batitle{Partial ordering of range symmetric matrices and $m$-projectors with
  respect to Minkowski adjoint in Minkowski space}.
\bjtitle{Adv. Linear Algebra Matrix Theor.}
\bvolume{6},
\bfpage{132}--\blpage{145}
(\byear{2016}).
\doiurl{10.4236/alamt.2016.64013}
\end{barticle}
\endbibitem

\bibitem{appequre}
\begin{barticle}
\bauthor{\bsnm{Krishnaswamy}, \binits{D.}},
\bauthor{\bsnm{Punithavalli}, \binits{G.}}:
\batitle{The anti-reflexive solutions of the matrix equation $AXB=C$ in
  Minkowski space $M$}.
\bjtitle{Int. J. Res. Rev. Appl. Sci.}
\bvolume{15},
\bfpage{221}--\blpage{227}
(\byear{2013})
\end{barticle}
\endbibitem

\bibitem{DMPfutre19}
\begin{barticle}
\bauthor{\bsnm{Kyrchei}, \binits{I.}}:
\batitle{Weighted quaternion core-EP, DMP, MPD, and CMP inverses and their
  determinantal representations}.
\bjtitle{RACSAM}
\bvolume{114},
\bfpage{1}--\blpage{26}
(\byear{2020}).
\doiurl{10.1007/s13398-020-00930-3}
\end{barticle}
\endbibitem

\bibitem{DMPfutre15}
\begin{barticle}
\bauthor{\bsnm{Kyrchei}, \binits{I.I.}}:
\batitle{Determinantal representations of the core inverse and its
  generalizations with applications}.
\bjtitle{J. Math.}
\bvolume{2019},
\bfpage{1}--\blpage{13}
(\byear{2019}).
\doiurl{10.1155/2019/1631979}
\end{barticle}
\endbibitem


\bibitem{DMPfutre8}
\begin{barticle}
\bauthor{\bsnm{Liu}, \binits{X.}},
\bauthor{\bsnm{Cai}, \binits{N.}}:
\batitle{High-order iterative methods for the DMP inverse}.
\bjtitle{J. Math.}
\bvolume{2018},
\bfpage{1}--\blpage{6}
(\byear{2018}).
\doiurl{10.1155/2018/8175935}
\end{barticle}
\endbibitem



\bibitem{WminMPiterre}
\begin{barticle}
\bauthor{\bsnm{Liu}, \binits{X.}},
\bauthor{\bsnm{Qin}, \binits{Y.}}:
\batitle{Iterative methods for computing the weighted Minkowski inverses of
  matrices in Minkowski space}.
\bjtitle{World Acad. Sci., Eng. Technol.}
\bvolume{75},
\bfpage{1083}--\blpage{1085}
(\byear{2011})
\end{barticle}
\endbibitem

\bibitem{minMPEPre}
\begin{barticle}
\bauthor{\bsnm{Lone}, \binits{M.S.}},
\bauthor{\bsnm{Krishnaswamy}, \binits{D.}}:
\batitle{$m$-projections involving Minkowski inverse and range symmetric property in Minkowski space}.
\bjtitle{J. Linear Topol. Algebra}
\bvolume{5},
\bfpage{215}--\blpage{228}
(\byear{2016})
\end{barticle}
\endbibitem


\bibitem{chaDMPmare}
\begin{barticle}
\bauthor{\bsnm{Ma}, \binits{H.}},
\bauthor{\bsnm{Gao}, \binits{X.}},
\bauthor{\bsnm{Stanimirovi\'{c}}, \binits{P.S.}}:
\batitle{Characterizations, iterative method, sign pattern and perturbation
  analysis for the DMP inverse with its applications}.
\bjtitle{Appl. Math. Comput.}
\bvolume{378},
\bfpage{125196}
(\byear{2020}).
\doiurl{10.1016/j.amc.2020.125196}
\end{barticle}
\endbibitem


\bibitem{DMPorignre}
\begin{barticle}
\bauthor{\bsnm{Malik}, \binits{S.B.}},
\bauthor{\bsnm{Thome}, \binits{N.}}:
\batitle{On a new generalized inverse for matrices of an arbitrary index}.
\bjtitle{Appl. Math. Comput.}
\bvolume{226},
\bfpage{575}--\blpage{580}
(\byear{2014}).
\doiurl{10.1016/j.amc.2013.10.060}
\end{barticle}
\endbibitem

\bibitem{minMPre}
\begin{barticle}
\bauthor{\bsnm{Meenakshi}, \binits{A.R.}}:
\batitle{Generalized inverses of matrices in Minkowski space}.
\bjtitle{Proc. Nat. Semin. Algebra Appl.}
\bvolume{57},
\bfpage{1}--\blpage{14}
(\byear{1999})
\end{barticle}
\endbibitem

\bibitem{minEPre}
\begin{barticle}
\bauthor{\bsnm{Meenakshi}, \binits{A.R.}}:
\batitle{Range symmetric matrices in Minkowski space}.
\bjtitle{Bull. Malaysian Math. Sc. Soc. (Second Series)}
\bvolume{23},
\bfpage{45}--\blpage{52}
(\byear{2000})
\end{barticle}
\endbibitem


\bibitem{DMPfutre7}
\begin{barticle}
\bauthor{\bsnm{Meng}, \binits{L.}}:
\batitle{The DMP inverse for rectangular matrices}.
\bjtitle{Filomat}
\bvolume{31},
\bfpage{6015}--\blpage{6019}
(\byear{2017}).
\doiurl{10.2298/FIL1719015M}
\end{barticle}
\endbibitem

\bibitem{DMPfutre21}
\begin{barticle}
\bauthor{\bsnm{Mosi\'{c}}, \binits{D.}}:
\batitle{Drazin-star and star-Drazin matrices}.
\bjtitle{Results Math.}
\bvolume{75},
\bfpage{1}--\blpage{21}
(\byear{2020}).
\doiurl{10.1007/s00025-020-01191-7}
\end{barticle}
\endbibitem

\bibitem{DMPfutre3}
\begin{barticle}
\bauthor{\bsnm{Mosi\'{c}}, \binits{D.}},
\bauthor{\bsnm{Djordjevi\'{c}}, \binits{D.S.}}:
\batitle{The gDMP inverse of Hilbert space operators}.
\bjtitle{J. Spectr. Theory}
\bvolume{8},
\bfpage{555}--\blpage{573}
(\byear{2018}).
\doiurl{10.4171/JST/207}
\end{barticle}
\endbibitem

\bibitem{penrosere}
\begin{barticle}
\bauthor{\bsnm{Penrose}, \binits{R.}}:
\batitle{A generalized inverse for matrices}.
\bjtitle{{M}ath. Proc. Cambridge Philos. Soc.}
\bvolume{51},
\bfpage{406}--\blpage{413}
(\byear{1955}).
\doiurl{10.1017/S0305004100030401}
\end{barticle}
\endbibitem



\bibitem{coreEPinversere}
\begin{barticle}
\bauthor{\bsnm{Prasad}, \binits{K.M.}},
\bauthor{\bsnm{Mohana}, \binits{K.S.}}:
\batitle{Core-EP inverse}.
\bjtitle{Linear Multilinear Algebra}
\bvolume{62},
\bfpage{792}--\blpage{802}
(\byear{2014}).
\doiurl{10.1080/03081087.2013.791690}
\end{barticle}
\endbibitem

\bibitem{minSVDre}
\begin{barticle}
\bauthor{\bsnm{Renardy}, \binits{M.}}:
\batitle{Singular value decomposition in Minkowski space}.
\bjtitle{Linear Algebra Appl.}
\bvolume{236},
\bfpage{53}--\blpage{58}
(\byear{1996}).
\doiurl{10.1016/0024-3795(94)00124-3}
\end{barticle}
\endbibitem


\bibitem{Cramerre}
\begin{barticle}
\bauthor{\bsnm{Robinson}, \binits{S.M.}}:
\batitle{A short proof of Cramer's rule}.
\bjtitle{Math. Mag.}
\bvolume{43},
\bfpage{94}--\blpage{95}
(\byear{1970}).
\doiurl{10.1080/0025570X.1970.11976018}
\end{barticle}
\endbibitem


\bibitem{DMPfutre16}
\begin{barticle}
\bauthor{\bsnm{Romo}, \binits{F.P.}}:
\batitle{On Drazin-Moore-Penrose inverses of finite potent endomorphisms}.
\bjtitle{Linear Multilinear Algebra}
\bvolume{69},
\bfpage{627}--\blpage{647}
(\byear{2021}).
\doiurl{10.1080/03081087.2019.1612834}
\end{barticle}
\endbibitem


\bibitem{DMPfutre25}
\begin{barticle}
\bauthor{\bsnm{Wang}, \binits{B.}},
\bauthor{\bsnm{Du}, \binits{H.}},
\bauthor{\bsnm{Ma}, \binits{H.}}:
\batitle{Perturbation bounds for DMP and CMP inverses of tensors via Einstein product}.
\bjtitle{Comput. Appl. Math.}
\bvolume{39},
\bfpage{1}--\blpage{17}
(\byear{2020}).
\doiurl{10.1007/s40314-019-1007-1}
\end{barticle}
\endbibitem

\bibitem{wangbookre}
\begin{bbook}
\bauthor{\bsnm{Wang}, \binits{G.}},
\bauthor{\bsnm{Wei}, \binits{Y.}},
\bauthor{\bsnm{Qiao}, \binits{S.}}:
\bbtitle{Generalized Inverses: Theory and Computations (Second Edition)}.
\bpublisher{Science Press},
\blocation{Beijing}
(\byear{2018}).
\doiurl{10.1007/978-981-13-0146-9}
\end{bbook}
\endbibitem

\bibitem{weakgroupinversere}
\begin{barticle}
\bauthor{\bsnm{Wang}, \binits{H.}},
\bauthor{\bsnm{Chen}, \binits{J.}}:
\batitle{Weak group inverse}.
\bjtitle{Open Math.}
\bvolume{16},
\bfpage{1218}--\blpage{1232}
(\byear{2018}).
\doiurl{10.1515/math-2018-0100}
\end{barticle}
\endbibitem

\bibitem{DMPfutre12}
\begin{barticle}
\bauthor{\bsnm{Wang}, \binits{H.}},
\bauthor{\bsnm{Chen}, \binits{J.}},
\bauthor{\bsnm{Yan}, \binits{G.}}:
\batitle{Generalized Cayley-Hamilton theorem for core-EP inverse matrix and DMP inverse matrix}.
\bjtitle{J. Southeast Univ. (English Edition)}
\bvolume{34},
\bfpage{135}--\blpage{138}
(\byear{2018}).
\doiurl{10.3969/j.issn.1003-7985.2018.01.019}
\end{barticle}
\endbibitem


\bibitem{MCore}
\begin{barticle}
\bauthor{\bsnm{Wang}, \binits{H.}},
\bauthor{\bsnm{Li}, \binits{N.}},
\bauthor{\bsnm{Liu}, \binits{X.}}:
\batitle{The $\mathfrak{m}$-core inverse and its applications}.
\bjtitle{Linear Multilinear Algebra}
\bvolume{69},
\bfpage{2491}--\blpage{2509}
(\byear{2021}).
\doiurl{10.1080/03081087.2019.1680597}
\end{barticle}
\endbibitem

\bibitem{mincoreEPre}
\begin{barticle}
\bauthor{\bsnm{Wang}, \binits{H.}},
\bauthor{\bsnm{Wu}, \binits{H.}},
\bauthor{\bsnm{Liu}, \binits{X.}}:
\batitle{The $\mathfrak{m}$-core-EP inverse in Minkowski space}.
\bjtitle{B. Iran. Math. Soc.}
\bvolume{48},
\bfpage{2577}--\blpage{2601}
(\byear{2022}).
\doiurl{10.1007/s41980-021-00619-2}
\end{barticle}
\endbibitem


\bibitem{ATSlimit1}
\begin{barticle}
\bauthor{\bsnm{Wei}, \binits{Y.}}:
\batitle{A characterization and representation of the generalized inverse  $A^{(2)}_{\mathcal{T},\mathcal{S}}$ and its applications}.
\bjtitle{Linear Algebra Appl.}
\bvolume{280},
\bfpage{87}--\blpage{96}
(\byear{1998}).
\doiurl{10.1016/S0024-3795(98)00008-1}
\end{barticle}
\endbibitem


\bibitem{minwgroupre}
\begin{barticle}
\bauthor{\bsnm{Wu}, \binits{H.}},
\bauthor{\bsnm{Wang}, \binits{H.}},
\bauthor{\bsnm{Jin}, \binits{H.}}:
\batitle{The $\mathfrak{m}$-WG inverse in Minkowski space}.
\bjtitle{Filomat}
\bvolume{36},
\bfpage{1125}--\blpage{1141}
(\byear{2022}).
\doiurl{10.2298/FIL2204125W}
\end{barticle}
\endbibitem



\bibitem{ATSlim}
\begin{barticle}
\bauthor{\bsnm{Yuan}, \binits{Y.}},
\bauthor{\bsnm{Zuo}, \binits{K.}}:
\batitle{Compute $\lim _{\lambda \to 0} x(\lambda I_p+YAX)^{-1}Y$ by the  product singular value decomposition}.
\bjtitle{Linear Multilinear Algebra}
\bvolume{64},
\bfpage{269}--\blpage{278}
(\byear{2016}).
\doiurl{10.1080/03081087.2015.1034641}
\end{barticle}
\endbibitem

\bibitem{algebraicMMP}
\begin{barticle}
\bauthor{\bsnm{Zekraoui}, \binits{H.}},
\bauthor{\bsnm{Al-Zhour}, \binits{Z.}},
\bauthor{\bsnm{\"{O}zel}, \binits{C.}}:
\batitle{Some new algebraic and topological properties of the Minkowski inverse in the Minkowski space}.
\bjtitle{Sci. World J.}
\bvolume{2013},
\bfpage{1}--\blpage{6}
(\byear{2013}).
\doiurl{10.1155/2013/765732}
\end{barticle}
\endbibitem

\bibitem{DMPfutre10}
\begin{barticle}
\bauthor{\bsnm{Zhou}, \binits{M.}},
\bauthor{\bsnm{Chen}, \binits{J.}}:
\batitle{Integral representations of two generalized core inverses}.
\bjtitle{Appl. Math. Comput.}
\bvolume{333},
\bfpage{187}--\blpage{193}
(\byear{2018}).
\doiurl{10.1016/j.amc.2018.03.085}
\end{barticle}
\endbibitem


\bibitem{DMPfutre13}
\begin{barticle}
\bauthor{\bsnm{Zhu}, \binits{H.}}:
\batitle{On DMP inverses and $m$-EP elements in rings}.
\bjtitle{Linear Multilinear Algebra}
\bvolume{67},
\bfpage{756}--\blpage{766}
(\byear{2019}).
\doiurl{10.1080/03081087.2018.1432546}
\end{barticle}
\endbibitem

\bibitem{diffDMPre}
\begin{barticle}
\bauthor{\bsnm{Zuo}, \binits{K.}},
\bauthor{\bsnm{Cvetkovi\'{c}-Ili\'{c}}, \binits{D.}},
\bauthor{\bsnm{Cheng}, \binits{Y.}}:
\batitle{Different characterizations of DMP-inverse of matrices}.
\bjtitle{Linear Multilinear Algebra}
\bvolume{70},
\bfpage{411}--\blpage{418}
(\byear{2022}).
\doiurl{10.1080/03081087.2020.1729084}
\end{barticle}
\endbibitem
\end{thebibliography}
\end{document}